\newtheorem{theorem}{Theorem}
\newtheorem{proposition}[theorem]{Proposition}
\newtheorem{lemma}[theorem]{Lemma}
\newtheorem{definition}[theorem]{Definition}
\newtheorem{remark}[theorem]{Remark}
\newtheorem*{theorem*}{Theorem}
\def\XXint#1#2#3{{\setbox0=\hbox{$#1{#2#3}{\int}$ }
\vcenter{\hbox{$#2#3$ }}\kern-.6\wd0}}
\newcommand{\adj}{\operatorname{adj}}
\newcommand{\Id}{\operatorname{Id}}
\newcommand{\id}{\operatorname{id}}
\definecolor{Yellow}{rgb}{0.95,0.9,0.0}
\definecolor{Red}{rgb}{0.8,0.1,0.1}
\definecolor{Green}{rgb}{0.1,0.65,0.2}
\definecolor{Blue}{rgb}{0.1,0.1,0.8}
\definecolor{Purple}{rgb}{0.7,0.1,0.7}
\definecolor{Grey}{rgb}{0.6,0.6,0.6}
\begin{document}

\title[The Prescribed Jacobian Inequality with $L^p$ Data]
{Bi-Sobolev Solutions to the Prescribed Jacobian Inequality in the Plane with $L^p$ Data}

\author{Julian Fischer and Olivier Kneuss}
\address{Max Planck Institute for Mathematics in the Sciences,
Inselstrasse 22, 04103 Leipzig, Germany, E-Mail:
julian.fischer@mis.mpg.de}
\address{Institute of Mathematics, Federal University of Rio de Janeiro, Cidade Universitaria, 21941909 Rio de Janeiro, Brazil, E-Mail:
olivier.kneuss@gmail.com}
\begin{abstract}
We construct planar bi-Sobolev mappings whose local volume distortion is bounded from below by a given function $f\in L^p$ with $p>1$, i.\,e.\ bi-Sobolev solutions for the prescribed Jacobian inequality in the plane for right-hand sides $f\in L^p$. More precisely, for any $1<q<(p+1)/2$ we construct a solution which belongs to $W^{1,q}$ and which preserves the boundary pointwise. For bounded right-hand sides $f\in L^{\infty}$, we provide bi-Lipschitz solutions. The basic building block of our construction are Lipschitz maps which stretch a given compact subset of the unit square by a given factor while preserving the boundary. The construction of these stretching maps relies on a slight strengthening of the covering result of Alberti, Cs\"ornyei, and Preiss for measurable planar sets in the case of compact sets.
\end{abstract}
\maketitle
\section{Introduction}
The prescribed Jacobian equation
\begin{subequations}
\label{JacobianEqualityWithBC}
\begin{align}
\label{JacobianEquality}
\det\nabla \phi&=f &&\text{in }\Omega,
\\
\phi&=\operatorname{id} &&\text{on }\partial \Omega,
\end{align}
\end{subequations}
(here for bounded connected domains $\Omega\subset \mathbb{R}^d$, positive right-hand sides $f:\Omega\rightarrow (0,\infty)$, and maps $\phi:\overline{\Omega}\rightarrow \mathbb{R}^d$) is an important subject of studies in geometric analysis. By the change of variables formula, the equation amounts to prescribing the volume distortion of the mapping $\phi$.

Clearly, the prescribed Jacobian equation \eqref{JacobianEquality} is underdetermined, as it is invariant under composition of $\phi$ with an arbitrary volume-preserving map. Therefore, one has neither uniqueness of solutions nor a regularity theory for solutions: Even for smooth right-hand sides $f$, there exist infinitely many solutions with rather low regularity. On the other hand, the regularity of the right-hand side $f$ may obviously restrict the regularity of solutions $\phi$: For right-hand sides $f$ that fail to belong to the H\"older space $C^{k,\alpha}$, the solutions $\phi$ cannot belong to the H\"older space $C^{k+1,\alpha}$. Similarly, for right-hand sides $f$ that lack $L^p$-integrability, solutions $\phi$ cannot belong to the Sobolev space $W^{1,dp}$.

Beyond these trivial restrictions on the regularity of solutions $\phi$, a notable result of Burago and Kleiner \cite{Burago-Kleiner} and McMullen \cite{McMullen} establishes the existence of bounded right-hand sides $f\in L^\infty$ for which the prescribed Jacobian equation \eqref{JacobianEquality} does not admit bi-Lipschitz solutions. In fact, these right-hand sides $f$ may even be taken as continuous and arbitrarily close to $1$.

For H\"older-regular right-hand sides $f\in C^{k,\alpha}(\overline{\Omega})$ (with $k\in \mathbb{N}_0$, $0<\alpha<1$) and sufficiently regular connected domains $\Omega$, an existence theory with optimal regularity has been developed for the prescribed Jacobian equation with identity boundary conditions \eqref{JacobianEqualityWithBC}: Provided that the trivial necessary condition $\int_{\Omega}f\,dx=|\Omega|$ for the solvability of \eqref{JacobianEqualityWithBC} is satisfied, the existence of a solution $\phi\in C^{k+1,\alpha}(\overline{\Omega};\overline{\Omega})$ has been shown by Dacorogna and Moser \cite{Dac-Moser}. Later, alternative proofs were given by Rivi\`ere and Ye \cite{Riv-Ye}, Avinyo et.\ al.\ \cite{AvinyoEtAl}, and Carlier and Dacorogna \cite{Dac-Carlier}. Note also that Ye \cite{Ye} proved that for right-hand sides $f$ satisfying the above necessary condition and  belonging to $W^{m,p}(\Omega),$ where $m\in \mathbb{N}$ and  $1<p<\infty$ are such that $W^{m,p}(\Omega)$ is an algebra, there exists a solution to \eqref{JacobianEqualityWithBC} in $W^{m+1,p}(\Omega;\Omega).$ An overview of the available theory for the prescribed Jacobian equation may be found in \cite{CDK}.

While the existence theory for the prescribed Jacobian equation is well-developed in the case of H\"older spaces, in the regime of less regular right-hand sides the situation changes drastically. In the case of merely $L^p$-integrable right-hand sides $f$, it is an open question for all $p\leq \infty$ whether in general a weak solution of the prescribed Jacobian equation \eqref{JacobianEqualityWithBC} exists in some Sobolev space $W^{1,q}(\Omega;\mathbb{R}^d)$ for some $q\geq 1$. The only existence result in this regime is due to  Rivi\`ere and Ye \cite{Riv-Ye} and ensures the existence of H\"older-continuous solutions $\phi\in C^{0,\alpha}$ for right-hand sides $f\in L^\infty$ or $f\in \operatorname{BMO}$. Here, ``solution'' is to be understood in the distributional sense, as defined by the change of variables formula.

In  the present work, in the planar case (i.\,e.\ for $\Omega\subset \mathbb{R}^2$) we establish existence results in Sobolev spaces $W^{1,q}(\Omega;\mathbb{R}^d)$ for the prescribed Jacobian \emph{inequality}
\begin{subequations}
\label{JacobianInequalityBC}
\begin{align}
\label{JacobianInequality}
\det\nabla \phi&\geq f\quad &&\text{in }\Omega,
\\
\phi&=\operatorname{id}\quad &&\text{on }\partial \Omega,
\end{align}
\end{subequations}
assuming just appropriate integrability of the right-hand side $f$. More precisely, for nonnegative right-hand sides $f\geq 0$ satisfying $\int_\Omega f~dx<|\Omega|$, we construct solutions with the following properties:
\begin{itemize}
\item Given $f\in L^p(\Omega)$ for some $p>1$, we show in Theorem \ref{L.p} that for every $1<q<(p+1)/2$ there exists a homeomorphism $\phi:\overline{\Omega}\rightarrow \overline{\Omega}$ which solves the prescribed Jacobian inequality \eqref{JacobianInequalityBC} and for which both $\phi$ and its inverse $\phi^{-1}$ belong to the Sobolev space $W^{1,q}(\Omega;\Omega)$.
\item For bounded right-hand sides $f\in L^{\infty}(\Omega)$, the prescribed Jacobian inequality \eqref{JacobianInequalityBC} admits a bi-Lipschitz solution $\phi:\overline{\Omega}\rightarrow \overline{\Omega}$, see Theorem \ref{theorem.det.nabla.u.geq.f}. The bi-Lipschitz regularity is in general sharp. Recall that this existence result is in strong contrast to the case of the prescribed Jacobian equation \eqref{JacobianEquality}, for which the existence of bi-Lipschitz solutions for bounded right-hand sides fails in general, as mentioned above \cite{Burago-Kleiner,McMullen}.
\end{itemize}
Note that the condition on the right-hand side $\int_{\Omega}f~dx < |\Omega|$ is indeed a necessary condition for the problem of the prescribed Jacobian inequality to be meaningful: In the case $\int_{\Omega}f~dx > |\Omega|$, the change of variables formula excludes the existence of (weak) solutions; in the equality case $\int_{\Omega}f~dx = |\Omega|$, by the change of variables formula the prescribed Jacobian inequality \eqref{JacobianInequalityBC} reduces to the more ``rigid'' case of the prescribed Jacobian equation \eqref{JacobianEquality}.

To the best of our knowledge, Theorem \ref{L.p} is the first existence result giving a solution (of an equation involving the Jacobian) whose gradient has some integrability when the right-hand side $f$ is merely in $L^p(\Omega)$.

For maps $\phi\in W^{1,q}(\Omega;\mathbb{R}^d)$ with $q\geq d$, the Jacobian determinant $\det\nabla \phi$ is naturally defined in the pointwise sense. In the case $d-1<q<d$, in the present work the Jacobian will be understood in the distributional sense (see e.\,g.\ \cite{DePhilippis,DeLellis,dacorogna}), namely
\begin{align*}
\mathcal{J}_{\phi}[\eta]:=
-\frac{1}{d}\int_\Omega \langle\adj \nabla \phi \cdot \phi, \nabla \eta\rangle ~dx\quad \text{for every test function }\eta \in C^\infty_{cpt}(\Omega).
\end{align*}
In the supercritical case $q>d$ and the critical case $q=d$, the pointwise notion of the Jacobian determinant coincides with the distributional determinant, i.\,e.\ it holds that
\begin{align*}
\mathcal{J}_\phi[\eta]=\int_\Omega \eta \det \nabla \phi ~dx.
\end{align*}
In the subcritical case $q<d$, the two notions differ in general: A simple example is provided by the map
\begin{align*}
\phi(x)=\frac{x}{|x|}.
\end{align*}
In fact, in the subcritical case the Jacobian determinant defined in the \emph{pointwise} sense experiences a certain loss of rigidity and is no longer associated with the change of variables formula. This has in particular been exploited by Koumatos, Rindler, and Wiedemann \cite{KKR} to construct solutions to the prescribed Jacobian equation in the pointwise sense with right-hand sides $f\in L^p(\Omega)$, $p<d$, $\Omega\subset \mathbb{R}^d$, by employing convex integration techniques.



Before stating our main results, let us comment on the strategy for the proof of our results. We shall reduce both existence results for the prescribed Jacobian inequality (\ref{JacobianInequality}) Theorem~\ref{L.p} and Theorem~\ref{theorem.det.nabla.u.geq.f} (i.\,e.\ both the case of right-hand sides in $L^p$ and the case of right-hand sides in $L^\infty$) to the existence of bi-Lipschitz maps which stretch a given measurable subset of the plane. This stretching result is provided in Theorem \ref{BiLipschitzMapTheorem} and is at the heart of our paper. In the case of right-hand sides $f\in L^p$, the reduction to Theorem \ref{BiLipschitzMapTheorem} is facilitated by iterative stretching of appropriate superlevel sets of $f$; see the proof of Proposition \ref{proposition.assuming.f.L.p.small}. In the case of bounded right-hand sides $f\in L^\infty$, the reduction to Theorem \ref{BiLipschitzMapTheorem} is quite straightforward and relies on the classical existence results for the prescribed Jacobian equation in the smooth case.

In Theorem \ref{BiLipschitzMapTheorem}, we consider a bounded open set $\Omega$ in $\mathbb{R}^2$ with $C^{1,1}$ boundary, let $\tau>0$, and consider any measurable set $M\subset \Omega$ with small measure. Then we are able to construct a bi-Lipschitz mapping $\phi=\phi_{\tau,M}$ (with $\phi:\overline{\Omega}\rightarrow \overline{\Omega}$) preserving the boundary pointwise, stretching $M$ by a factor of at least $1+\tau$, and compressing in a controlled way outside $M$ in the sense $\det\nabla \phi\geq 1+\tau$ a.\,e.\ on $M$ and $\det\nabla\phi\geq 1-C|M|^{1/2}\tau$ a.\,e.\ in $\Omega$,
together with uniform $W^{1,p}$ estimates for $1\leq p\leq \infty$.
Our proof of Theorem \ref{BiLipschitzMapTheorem} is constructive and is based on a slight strengthening of a covering result for planar measurable sets by Alberti, Cs\"ornyei, and Preiss \cite{Alberti}. As the covering result is restricted to two dimensions, the same is true for our construction (see Remark \ref{remark.no.generalization.higher.dim}).

The covering result of Alberti, Cs\"ornyei, and Preiss \cite{Alberti} states that any compact subset $K$ of the unit square may be covered (for $\delta>0$ small enough) by a collection of horizontal and vertical 1-Lipschitz strips with height (respectively width) $2\delta$, the number of strips being bounded by $2\delta^{-1}\sqrt{|K|}$.
Here, a horizontal
(respectively vertical) 1-Lipschitz strip is defined to be the graph of a
1-Lipschitz function over the horizontal (respectively vertical) axis thickened
by $\delta$ in the vertical (respectively horizontal) direction.
In fact, the strips may be chosen in such a way that the horizontal strips have pairwise disjoint interior and such that the same is true for the vertical strips (see Lemma \ref{IntersectionFreeCovering}). This has first been observed by Marchese \cite{Marchese}; the authors of the present paper have established a similar result independently in an earlier version of the present work. See the picture in the center of Figure~\ref{SketchStretching} for an example of such a covering by strips.

By stretching the horizontal strips in the vertical direction and the vertical strips in the horizontal direction (see the bottom pictures in Figure~\ref{SketchStretching}), in Proposition \ref{Prop.Bi.Lip.simpl} we obtain a mapping which stretches a given compact subset $K$ of the unit square. Here, the fact that the strips may be chosen in such a way that the number of strips that may cover a single point is bounded seems to be crucial for our construction, as otherwise the uniform Lipschitz bound for our maps would be lost.
Next, by an explicit construction based on an interpolation on a boundary layer between the identity boundary conditions and the boundary values provided by Proposition \ref{Prop.Bi.Lip.simpl}, we obtain a map that additionally preserves the boundary pointwise (see Lemma \ref{lemma.simpl.boundary.values} and, in particular, Figure~\ref{BoundaryCorrection}). Using inner regularity of the Lebesgue measure and weak continuity of the determinant, our result immediately extends to general measurable sets (see Lemma \ref{Lemma.simpl.set}). Finally we extend the result -- which by now has only been proven for the unit square -- to domains in the plane with boundary of class $C^{1,1}$; see Lemma \ref{Lemma.simpl.domain}.

\smallskip

{\bf Notation.}
Throughout the paper, we use the following notation:
\begin{itemize}
\item We denote the Lebesgue measure of a measurable set $M\subset
\mathbb{R}^d$ by $|M|.$
\item For $M\subset \mathbb{R}^d,$ $\chi_{M}$ denotes the usual characteristic
function of $M:$
\begin{align*}
\chi_M=\left\{\begin{array}{cl}
1&\text{in $M$}\\
0&\text{in $M^c.$}
\end{array}
\right.
\end{align*}
\item The notation $\sharp P$ refers to the number of elements
of the set $P$.
\item A function $g:\mathbb{R}\rightarrow \mathbb{R}$ is said to be
$1$-Lipschitz if it is Lipschitz with $|g'|\leq 1$ almost everywhere.
\item By $C$ we denote a generic constant whose value may change from
appearance to appearance.
\item By $\operatorname{id}$ we denote the identity map, while by
$\operatorname{Id}$ we denote the unit matrix.
\item For a matrix $A\in \mathbb{R}^2$ we denote by $\operatorname{adj}A$ the adjugate matrix of $A.$
For example when $d=2$ we have
\begin{align*}
\operatorname{adj}A=\begin{pmatrix}
A^2_2
&
-A^1_2
\\
-A^2_1
&
A^1_1
\end{pmatrix}\quad \text{where}\quad
A=\begin{pmatrix}
A^1_1
&
A^1_2
\\
A^2_1
&
A^2_2
\end{pmatrix}.
\end{align*}
\item Throughout the paper, we use standard notation for Sobolev and H\"older spaces. For example, by $C^{1,\alpha}(\Omega)$ we denote the space of functions on $\Omega$ whose first derivative is H\"older continuous with exponent $\alpha$; by $W^{1,p}(\Omega;\mathbb{R}^d)$, we denote the space of functions $f\in L^p(\Omega)$ whose distributional derivative satisfies $\nabla f\in L^p(\Omega;\mathbb{R}^d)$. By $\operatorname{Diff}^\infty(A;B)$ we denote the class of smooth diffeomorphisms from $A$ to $B$.
\end{itemize}

\section{Main results}
We now state our main results.

\begin{theorem}\label{L.p}
Let $\Omega\subset\mathbb{R}^2$ be a bounded connected domain with boundary of class $C^{1,1}$. Let $f\in L^p(\Omega)$, $p>1$, be nonnegative with
\begin{align*}
\int_\Omega f \,dx<|\Omega|.
\end{align*}
Then for every $q$ with $1<q<(p+1)/2$ there exists a homeomorphism $\phi:\overline{\Omega}\rightarrow \overline{\Omega}$ with the bi-Sobolev property $\phi,\phi^{-1}\in W^{1,q}(\Omega;\Omega)$ which solves the prescribed Jacobian inequality with identity boundary conditions
\begin{subequations}
\label{main.equation.L.p.case}
\begin{align}
\label{MainResultJacobianInequality}
\det\nabla \phi&\geq f &&\text{in }\Omega,
\\
\phi&=\operatorname{id} &&\text{on }\partial \Omega.
\end{align}
\end{subequations}
Here, the inequality \eqref{MainResultJacobianInequality} holds almost everywhere in the case $q\geq 2$; in contrast, in the case $q<2$ it is to be understood in the weak sense
\begin{align*}
\mathcal{J}_{\phi}[\eta]=
-\frac{1}{2}\int_\Omega \langle\adj \nabla \phi \cdot \phi, \nabla \eta\rangle ~dx\geq \int_{\Omega}f\eta ~dx\quad
\text{for every }\eta\in C^{\infty}_{cpt}(\Omega; [0,\infty)).
\end{align*}
\end{theorem}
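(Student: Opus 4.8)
The strategy is to reduce Theorem~\ref{L.p} to the stretching result Theorem~\ref{BiLipschitzMapTheorem} via an iterative scheme on superlevel sets of $f$, as announced in the introduction. First I would dispose of normalizations: by scaling $f$ it suffices to treat the case where $f$ is small in $L^p$ (this is exactly the content of the intermediate Proposition~\ref{proposition.assuming.f.L.p.small} referred to in the excerpt), and one reduces to this via a preliminary diffeomorphism solving the prescribed Jacobian \emph{equation} $\det\nabla\psi=c$ with a small constant $c<1$ (possible on a $C^{1,1}$ domain by the Dacorogna--Moser theory), absorbing the discrepancy between $\int_\Omega f\,dx$ and $|\Omega|$ into the slack $|\Omega|-\int_\Omega f\,dx>0$; after composing with such a $\psi$ we may assume the right-hand side has arbitrarily small $L^p$ norm and still satisfies the strict inequality on the integral. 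For the $L^\infty$ piece of the right-hand side one directly invokes Theorem~\ref{theorem.det.nabla.u.geq.f}, so the real work is the genuinely $L^p$, unbounded part.

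Next, the core construction: decompose $f$ dyadically as $f=\sum_{k\ge 0} f_k$ where $f_k$ is supported on (roughly) the superlevel set $\{2^k\le f<2^{k+1}\}$, so $f_k\approx 2^k\chi_{M_k}$ with $M_k$ of measure $|M_k|\lesssim 2^{-kp}\|f\|_{L^p}^p$, hence $\sum_k 2^k|M_k|^{1/2}\lesssim \sum_k 2^{k(1-p/2)}\|f\|_{L^p}^{p/2}$, which is summable precisely because $p>1$. The target map $\phi$ is built as an infinite composition $\phi=\cdots\circ\phi_2\circ\phi_1\circ\phi_0$, where each $\phi_k$ is (up to pulling back the set $M_k$ through the previously constructed maps) the bi-Lipschitz stretching map $\phi_{\tau_k,M_k}$ from Theorem~\ref{BiLipschitzMapTheorem} with stretch factor $1+\tau_k\approx 2^k$. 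By the chain rule $\det\nabla\phi=\prod_k(\det\nabla\phi_k)\circ(\text{earlier maps})$: on the (pulled-back) piece of $M_k$ the factor $\phi_k$ contributes $\ge 2^k$ so that $\det\nabla\phi\gtrsim f$ there, while on every set the compression estimate $\det\nabla\phi_k\ge 1-C|M_k|^{1/2}\tau_k$ from Theorem~\ref{BiLipschitzMapTheorem} keeps the running product bounded below by $\prod_k(1-C2^k|M_k|^{1/2})>0$, using the summability just noted (after shrinking $\|f\|_{L^p}$ so the product stays, say, $\ge 1/2$). Each $\phi_k$ fixes $\partial\Omega$ pointwise, so does the composition, and one checks the composition converges uniformly (the maps $\phi_k$ converge to the identity in sup norm since $|M_k|\to 0$) to a homeomorphism of $\overline\Omega$.

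The remaining, and I expect hardest, point is the Sobolev regularity $\phi,\phi^{-1}\in W^{1,q}$ for every $q<(p+1)/2$, together with making rigorous the \emph{distributional} reading of $\det\nabla\phi\ge f$ in the subcritical range $q<2$. For the $W^{1,q}$ bound one estimates $\nabla\phi$ by the telescoped product of the $\nabla\phi_k$; each $\phi_k$ is Lipschitz with constant $\lesssim 2^{k}$ (that is the scale of the stretch) but its gradient differs from the identity only on a set of measure $\lesssim |M_k|$ plus a controlled boundary layer, so $\|\nabla\phi_k-\Id\|_{L^q}^q\lesssim 2^{kq}|M_k|\lesssim 2^{k(q-p)}$, which is summable in $k$ exactly when $q<p$; however the composition inflates this, and tracking $\int_\Omega|\nabla\phi|^q$ through the chain rule the binding constraint turns out to be $\int 2^{kq}|M_k|<\infty$ combined with the Jacobian-change-of-variables cost of pulling the sets back through Lipschitz maps of size $2^j$, which after optimizing yields precisely $q<(p+1)/2$. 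The same uniform-in-$q$ control applied to the inverse maps $\phi_k^{-1}$ (whose Lipschitz and $W^{1,q}$ bounds are symmetric by Theorem~\ref{BiLipschitzMapTheorem}) gives $\phi^{-1}\in W^{1,q}$. Finally, to pass the pointwise inequality $\det\nabla\phi_N\ge f_N:=\sum_{k<N}f_k$ valid for the finite compositions to the limit in the weak sense, one uses weak continuity of the distributional Jacobian: $\phi_N\to\phi$ strongly in $W^{1,q}$ with $q>1$ and the bi-Sobolev bounds give $\adj\nabla\phi_N\cdot\phi_N$ converging in $L^1$ against $\nabla\eta$, so $\mathcal J_{\phi_N}[\eta]\to\mathcal J_\phi[\eta]$ while $\int f_N\eta\to\int f\eta$ by monotone convergence, yielding $\mathcal J_\phi[\eta]\ge\int_\Omega f\eta\,dx$ for all nonnegative test functions $\eta$; in the range $q\ge 2$ the distributional and pointwise Jacobians coincide and one recovers the a.e.\ inequality. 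The delicate bookkeeping of how the measures $|M_k|$, the stretch factors $2^k$, and the Lipschitz constants of the partial compositions interact to produce the sharp threshold $(p+1)/2$ is the main obstacle.
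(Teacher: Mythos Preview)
Your iterative scheme has the right spirit but contains a structural error that prevents it from reaching the threshold $q<(p+1)/2$. You propose a dyadic decomposition of $f$ with \emph{variable} stretch factors $\tau_k\approx 2^k$ applied to the superlevel sets $M_k=\{2^k\le f<2^{k+1}\}$. Two things go wrong. First, the stretching map $\phi_{\tau,M}$ from Theorem~\ref{BiLipschitzMapTheorem} does \emph{not} have gradient equal to the identity outside a set of measure $\lesssim|M|$: by construction (it stretches Lipschitz strips covering $M$) the gradient is perturbed on a set of measure $\sim|M|^{1/2}$, which is exactly why the estimate \eqref{main.W.1.p} reads $|M|^{1/(2p)}$ rather than $|M|^{1/p}$. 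So your bound $\|\nabla\phi_k-\Id\|_{L^q}^q\lesssim 2^{kq}|M_k|$ should be $2^{kq}|M_k|^{1/2}\sim 2^{k(q-p/2)}$. Second, and fatally, with $\tau_k\approx 2^k$ each $\phi_k$ has Lipschitz constant $\sim 2^k$, so when you track the composition via the change-of-variables trick $\frac{|\nabla\phi_k|^q}{\det\nabla\phi_k}\le C|\nabla\phi_k|^{q-1}\lesssim 2^{k(q-1)}$ (using \eqref{main.nabla.phi.bounded.by.det}), the accumulated factor after $n$ steps is $\prod_{k\le n}2^{k(q-1)}=2^{(q-1)n(n+1)/2}$, i.e.\ growth of order $2^{n^2}$, which no geometric decay of $|M_n|$ can beat.

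The paper avoids this by a different organization of the iteration: it fixes a single large stretch factor $\tau_0$ once and for all, and defines the sets $M_j$ \emph{adaptively} as the (pushed-forward) locus where, after $j-1$ stretches, the running Jacobian is still below $2f$. Each $\phi_j$ then has uniformly bounded Lipschitz constant $\le 1+C\tau_0$, so the composition after $n$ steps contributes only $(1+C\tau_0)^{n(q-1)}$. Meanwhile, since on $A_j=\bigcap_{k<j}\{\det\nabla(\phi_k\circ\cdots\circ\phi_1)\le 2f\}$ the Jacobian has been multiplied by $(1+\tau_0)^{j-1}$, one gets $|M_j|\lesssim (1+\tau_0)^{-(p-1)(j-1)}\|f\|_{L^p}^p$ directly from $f\in L^p$. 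Balancing $(1+\tau_0)^{n(q-1)}$ against $|M_{n+1}|^{1/2}\sim(1+\tau_0)^{-n(p-1)/2}$ is exactly what produces the threshold $2(q-1)<p-1$. Your reduction step is also off: the equation $\det\nabla\psi=c$ with $c<1$ and $\psi=\id$ on $\partial\Omega$ has no solution; the paper instead mollifies $f$ to a smooth $f_\epsilon$ with $\int_\Omega f_\epsilon=|\Omega|$, solves $\det\nabla\psi_\epsilon=f_\epsilon$ by Dacorogna--Moser, and is left with a right-hand side $(f/f_\epsilon)\chi_{A_\epsilon}\circ\psi_\epsilon^{-1}$ whose $L^p$ norm is small because $|A_\epsilon|\to 0$.
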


Recall that by the change of variables formula, the condition $\int_{\Omega}\det\nabla \phi~dx\leq |\Omega|$ is necessary for the solvability of \eqref{main.equation.L.p.case}. This assertion is also true for $q<2$ when interpreting the prescribed Jacobian inequality \eqref{MainResultJacobianInequality} in the distributional sense. Recall also that for $\int_{\Omega}f=|\Omega|$ the prescribed Jacobian inequality \eqref{main.equation.L.p.case} actually reduces to the prescribed Jacobian equation (\ref{JacobianEquality}).

\begin{remark}\label{remark.apres.L.p}
Note that the best possible regularity for a solution of \eqref{MainResultJacobianInequality} (recall that we are working in two dimensions) would be $\phi\in W^{1,2p}$ (i.e. $q=2p$). However it seems that the asymptotic ratio of $1/2$ between $q$ and $p$ in our main result cannot be improved with our approach.
\end{remark}

For $p=+\infty$, the following limiting version of the previous theorem holds.
\begin{theorem}
\label{theorem.det.nabla.u.geq.f}
Let $\Omega\subset \mathbb{R}^2$ be a bounded domain with boundary of class $C^{1,1}$. Let $f\in L^{\infty}(\Omega)$ be a nonnegative function with $\int_{\Omega}f~dx<|\Omega|$. Then there exists a bi-Lipschitz map $\phi:\overline{\Omega}\rightarrow \overline{\Omega}$ satisfying
\begin{subequations}
\label{Jacobian.inequality.theorem}
\begin{align}
\det\nabla \phi&\geq f&&\text{a.\,e.\ in }\Omega,
\\
\phi&=\operatorname{id}&&\text{on }\partial \Omega.
\end{align}
\end{subequations}
Moreover the regularity of $\phi$ is sharp in general: There exists an $f$ for which there is no $C^1$ solution $\phi$ to (\ref{Jacobian.inequality.theorem}).
\end{theorem}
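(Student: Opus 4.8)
The plan is to obtain $\phi$ as a \emph{finite} composition of the bi-Lipschitz stretching maps provided by Theorem~\ref{BiLipschitzMapTheorem}, possibly completed by one Dacorogna--Moser diffeomorphism~\cite{Dac-Moser}, together with a separate elementary obstruction argument for the sharpness claim. We may assume $\operatorname{esssup}f>1$, since otherwise $\phi=\operatorname{id}$ already solves \eqref{Jacobian.inequality.theorem}. The crucial elementary observation is that if $\psi:\overline\Omega\to\overline\Omega$ is bi-Lipschitz with $\psi=\operatorname{id}$ on $\partial\Omega$, then $\det\nabla(\rho\circ\psi)=(\det\nabla\rho\circ\psi)\,\det\nabla\psi$, so that finding $\rho$ with $\det\nabla(\rho\circ\psi)\ge h$ and $\rho\circ\psi=\operatorname{id}$ on $\partial\Omega$ is equivalent to finding $\rho$ with $\det\nabla\rho\ge h_\psi$ and $\rho=\operatorname{id}$ on $\partial\Omega$, where $h_\psi(y):=h(\psi^{-1}(y))/\det\nabla\psi(\psi^{-1}(y))$ is the push-forward right-hand side, which satisfies $\int_\Omega h_\psi=\int_\Omega h$. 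Hence it suffices to apply bi-Lipschitz boundary-preserving maps so as to drive the essential supremum of the current right-hand side down to $\le 1$ in finitely many steps, after which the identity finishes.

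First I would carry out this reduction of $\operatorname{esssup}$ by iterating Theorem~\ref{BiLipschitzMapTheorem} along super-level sets, distinguishing two regimes. Writing $h$ for the current right-hand side and $t:=\operatorname{esssup}h$, as long as $t$ exceeds a threshold $T=T(\Omega)$ the set $M:=\{h>t/2\}$ has $|M|\le 2\int_\Omega h/t\le 2|\Omega|/t$, which is as small as desired, so Theorem~\ref{BiLipschitzMapTheorem} applies to $M$ with, say, $\tau=\tfrac{1}{3}$, yielding $\psi$ with $\det\nabla\psi\ge\tfrac{4}{3}$ on $M$ and $\det\nabla\psi\ge 1-\tfrac{1}{3}C|M|^{1/2}\ge\tfrac{2}{3}$ on all of $\Omega$; the push-forward then satisfies $\operatorname{esssup}h_\psi\le\max\{t/(1+\tau),\,(t/2)/(1-\tfrac{1}{3}C|M|^{1/2})\}\le\tfrac{3}{4}t$, so finitely many such steps bring $t$ below $T$. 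In the second, near-identity regime $1<t\le T$ I would take $M:=\{h>1/2\}$ (so $|M|\le 2\int_\Omega h\le 2|\Omega|$) and apply Theorem~\ref{BiLipschitzMapTheorem} with a fixed $\tau\le(2C(2|\Omega|)^{1/2})^{-1}$: off $M$ this keeps $\det\nabla\psi\ge\tfrac{1}{2}\ge h$, while on $M$ it replaces $t$ by $\le t/(1+\tau)$, and iterating this fixed-$\tau$ step finitely many times brings $\operatorname{esssup}$ below $1+\tau$ and hence, after one more step, to $\le 1$. Alternatively, once the current right-hand side is a near-identity perturbation with integral $<|\Omega|$, one may conclude by composing with a Dacorogna--Moser diffeomorphism for a smooth $g$ with $g\ge h$ and $\int_\Omega g=|\Omega|$; this is the point at which the classical smooth existence theory enters. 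Since every map involved equals $\operatorname{id}$ on $\partial\Omega$ and only finitely many are composed, the resulting $\phi$ is bi-Lipschitz and solves \eqref{Jacobian.inequality.theorem}. I expect the main obstacle to be the quantitative bookkeeping: each application of Theorem~\ref{BiLipschitzMapTheorem} inflates the $W^{1,\infty}$ norm, so one must choose the thresholds and factors $\tau$ so that the number of steps stays finite and the product of the bi-Lipschitz constants stays bounded. This is exactly where the hypothesis $f\in L^\infty$, rather than merely $f\in L^p$, is used, and it is why the present construction yields a bi-Lipschitz map instead of only a $W^{1,q}$ one.

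For the sharpness statement I would argue by contradiction using an explicit right-hand side. Fix a countable base $\{B_i\}_{i\in\mathbb N}$ of $\Omega$ and choose, inside each $B_i$, a compact set $E_i$ of positive Lebesgue measure with empty interior and $|E_i|<2^{-i}|\Omega|/4$ (a fat Cantor set). Put $A:=\bigcup_i E_i$ and $f:=2\chi_A$; then $f\in L^\infty(\Omega)$ is nonnegative with $\int_\Omega f=2|A|<|\Omega|$, while $|A\cap B|>0$ for every ball $B\subset\Omega$. If $\phi\in C^1(\overline\Omega;\mathbb R^2)$ satisfied $\det\nabla\phi\ge f$ a.e.\ and $\phi=\operatorname{id}$ on $\partial\Omega$, then for every $x_0\in\Omega$ and every small $r>0$ the set $B_r(x_0)\cap A$ has positive measure and $\det\nabla\phi\ge 2$ a.e.\ there, so it contains a point at which $\det\nabla\phi\ge 2$; letting $r\downarrow 0$ and using continuity of $\det\nabla\phi$ gives $\det\nabla\phi(x_0)\ge 2$. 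Hence $\int_\Omega\det\nabla\phi\,dx\ge 2|\Omega|$. On the other hand $\det\nabla\phi$ is a null Lagrangian, so $\int_\Omega\det\nabla\phi\,dx=\int_\Omega\det\nabla(\operatorname{id})\,dx=|\Omega|$ because $\phi=\operatorname{id}$ on $\partial\Omega$ --- a contradiction. Thus no $C^1$ solution exists, whereas the existence part of the theorem, applied to this very $f$, produces a bi-Lipschitz one; bi-Lipschitz regularity is therefore optimal in general.
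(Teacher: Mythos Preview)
Your sharpness argument is correct and is essentially the paper's argument (the paper uses an open dense set of small measure in place of your union of fat Cantor sets, but the density--continuity contradiction is identical).

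The existence argument, however, has a genuine gap in the second regime. Theorem~\ref{BiLipschitzMapTheorem} requires $\max\{|M|,\sqrt{|M|}\tau\}\le c$ for a \emph{fixed} small constant $c=c(\Omega)$; it is not enough to compensate a large $|M|$ by a small $\tau$. When $1<t\le T$ your choice $M=\{h>1/2\}$ only gives $|M|\le 2|\Omega|$, which need not satisfy $|M|\le c$. (Concretely, take $h=\tfrac32$ on half of $\Omega$ and $h=0$ on the other half: every super-level set $\{h>\lambda\}$ with $\lambda<\tfrac32$ has measure $|\Omega|/2$.) Your Dacorogna--Moser alternative also fails as stated: for a merely $L^\infty$ function $h$ there is in general no smooth $g$ with $g\ge h$ pointwise and $\int_\Omega g=|\Omega|$ --- if $h=T\chi_E$ with $E$ dense, any continuous $g\ge h$ satisfies $g\ge T$ everywhere.

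The paper circumvents this by reversing the order of the two ingredients. It first mollifies $f$ and adds a constant to obtain a smooth $f_{\nu_0}$ with $\int_\Omega f_{\nu_0}=|\Omega|$, $f_{\nu_0}\ge\beta/2>0$, and $f_{\nu_0}\ge f+\beta/2$ \emph{outside a set $A$ of arbitrarily small measure} (not pointwise everywhere). A single Dacorogna--Moser diffeomorphism $\varphi$ with $\det\nabla\varphi=f_{\nu_0}$ then already satisfies $\det\nabla\varphi\ge f$ outside $A$; the image $\varphi(A)$ has small measure, and a \emph{single} application of Theorem~\ref{BiLipschitzMapTheorem} with $M=\varphi(A)$ and a suitably large fixed $\tau_0$ produces $\psi$ so that $\phi=\psi\circ\varphi$ satisfies $\det\nabla\phi\ge f$ also on $A$. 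No iteration is needed, and the smallness hypothesis of Theorem~\ref{BiLipschitzMapTheorem} is met because it is $|A|$, not any super-level set of $f$, that is forced to be small.
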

\begin{remark}\label{Remark.after.det.nabla.phi.geq.f}
Recall that for $f\in C^0(\overline{\Omega})$ with $f\geq 0$ and $\int_{\Omega}f~dx=|\Omega|$, in general the prescribed Jacobian \eqref{JacobianEquality} does not admit a bi-Lipschitz solution \cite{Burago-Kleiner,McMullen}. As in the case $\int_{\Omega}f=|\Omega|$ the prescribed Jacobian inequality (\ref{Jacobian.inequality.theorem}) reduces to the case of the prescribed Jacobian equation \eqref{JacobianEquality}, we see that the assumption $\int_\Omega f<|\Omega|$ is sharp in general.
\end{remark}


All the above results strongly rely on the following result which
essentially states that for any $\tau>0$ and any measurable set $M\subset \Omega\subset \mathbb{R}^2$ with small enough measure, we can construct a bi-Lipschitz map mapping $\overline{\Omega}$ to itself which
\begin{itemize}
\item  stretches the set $M$ by a factor of at least $1+\tau,$
\item compresses $\Omega\setminus M$ by no more than a factor of $1-C\sqrt{|M|}\tau,$
\item has good uniform estimates for its difference from the identity,
\item preserves the boundary pointwise.
\end{itemize}

\begin{theorem}
\label{BiLipschitzMapTheorem}
Let $\Omega\subset \mathbb{R}^2$ be a bounded domain with boundary of class
$C^{1,1}$. Then there exist  constants $C,c>0$ depending only on $\Omega$ with
the following property: For any $\tau>0$ and for any measurable set $M\subset
\Omega$ with $\max\{|M|,\sqrt{|M|}\tau\}\leq c$ there exists a bi-Lipschitz mapping
$\phi=\phi_{\tau,M}:\overline{\Omega}\rightarrow \overline{\Omega}$ satisfying
\begin{align}
\phi&=\operatorname{id} &&\text{on }\partial\Omega,
\label{main.boundary}
\\
\|\phi-\operatorname{id}\|_{W^{1,p}(\Omega)}&\leq C|M|^{1/(2p)}
\tau &&\text{for every }1\leq p\leq \infty,
\label{main.W.1.p}
\\
\|\phi-\operatorname{id}\|_{L^\infty(\Omega)}&\leq C|M|^{1/2}\tau,
\label{main.L.infty}
\\
|\nabla\phi|&\leq C\det \nabla \phi &&\text{a.\,e.\ in }\Omega,
\label{main.nabla.phi.bounded.by.det}
\\
\det \nabla \phi&\geq 1+\tau\quad &&\text{a.\,e.\ on }M,
\label{main.det.on.M}
\\
\det \nabla \phi&\geq 1-C|M|^{1/2}\tau &&\text{a.\,e.\ in }\Omega.
\label{main.det.global}
\end{align}
\end{theorem}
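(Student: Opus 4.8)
The plan is to give an explicit, essentially one‑dimensional construction on the unit square $Q=(0,1)^2$ for $M$ replaced by a compact set, and then to peel off the three restrictions (compactness, unit square, identity boundary) by soft arguments. By inner regularity of Lebesgue measure it suffices to treat a compact $K\subset Q$; the passage to a general measurable $M\subset\Omega$ then follows by exhausting $M$ from inside by compacts $K_n$ with $|K_n|\uparrow|M|$, applying the construction to each $K_n$ with the \emph{same} $\tau$, and extracting a weak limit of the maps $\phi_n$ — here the uniform bound \eqref{main.W.1.p} supplies compactness, and the weak-$*$ continuity of the distributional Jacobian along sequences of uniformly bi‑Lipschitz maps upgrades \eqref{main.det.on.M}--\eqref{main.det.global} to the limit (via $\int\eta\det\nabla\phi_n\ge\int_{K_n}(1+\tau)\eta+\int_{\Omega\setminus K_n}(1-C\sqrt{|K_n|}\tau)\eta$ for $\eta\ge0$, and $|K_n|\to|M|$); this is Lemma~\ref{Lemma.simpl.set}. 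The reduction from a bounded $C^{1,1}$ domain to the square (Lemma~\ref{Lemma.simpl.domain}) is carried out by composing with a fixed bi‑Lipschitz, area‑preserving change of coordinates $\Omega\to Q$ with constants depending only on $\Omega$ (such a map exists since $\partial\Omega\in C^{1,1}$), together with an explicit interpolation on a collar of $\partial\Omega$ so that \eqref{main.boundary} and \eqref{main.det.global} survive the transfer. Finally, the identity boundary condition on $\partial Q$ — which the core construction will only respect in the sense of mapping $Q$ onto itself — is achieved by an explicit interpolation between the identity and the constructed boundary values on a thin boundary layer (Lemma~\ref{lemma.simpl.boundary.values}), which only worsens the constants.

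The heart is thus Proposition~\ref{Prop.Bi.Lip.simpl}, the stretching of a compact $K\subset Q$. First I would invoke the strengthened covering result (Lemma~\ref{IntersectionFreeCovering}): for every small $\delta>0$ there is a family of at most $2\delta^{-1}\sqrt{|K|}$ one‑Lipschitz strips of transverse width $2\delta$ covering $K$, the horizontal ones having pairwise disjoint interiors and likewise the vertical ones; after a harmless one‑Lipschitz extension each strip is a graph over the full edge, so the horizontal strips are cleanly stacked (their centre functions ordered and pairwise $2\delta$‑separated) and similarly for the vertical ones. I then set $\phi=\phi_v\circ\phi_h$, where $\phi_h(x,y)=(x,\Phi_x(y))$ acts separately on each vertical fibre $\{x\}\times(0,1)$: there $\Phi_x$ dilates each of the (at most $2\delta^{-1}\sqrt{|K|}$, hence of total length $\le 2\sqrt{|K|}$) slices cut out by the horizontal strips by the fixed factor $1+2\tau$ and rescales each complementary gap by the single factor $\mu_h=\frac{1-2\delta(1+2\tau)m}{1-2\delta m}$ needed to keep the fibre length equal to $1$; the smallness of $|K|$ and of $\sqrt{|K|}\tau$ guarantees $\mu_h>0$ and $1-\mu_h\le C\sqrt{|K|}\tau$. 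The map $\phi_v$ is defined analogously along the horizontal fibres, using the images under $\phi_h$ of the vertical strips (these are, for $\tau$ not too large, again strips with $(1+O(\tau))$‑Lipschitz centre functions and transverse width $\approx 2\delta$, with pairwise disjoint interiors since $\phi_h$ is injective). The estimates now fall out: because the centre functions are one‑Lipschitz and the strips are $2\delta$‑separated, $\Phi_x(y)$ is Lipschitz jointly in $(x,y)$, and its only $x$‑dependence on the slices enters through $(1+2\tau)-\mu_h=O(\tau)$, so $|\nabla\phi_h-\Id|=O(\tau)$ on the slices (up to the bounded shear from $|g'|\le1$) and $|\nabla\phi_h-\Id|=O(\sqrt{|K|}\tau)$ on the gaps — summing over the slices, of total area $\le2\sqrt{|K|}$, yields \eqref{main.W.1.p}, while the fibrewise displacement is $\le C\sqrt{|K|}\tau$, giving \eqref{main.L.infty}. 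Since $\det\nabla\phi_h=\partial_y\Phi_x\in\{1+2\tau,\mu_h\}$ and likewise for $\phi_v$, multiplication gives $\det\nabla\phi\ge(1-C\sqrt{|K|}\tau)^2\ge1-C'\sqrt{|K|}\tau$ everywhere, i.e. \eqref{main.det.global}; for a point of $K$ — which lies in a horizontal strip or in a $\phi_h$‑image of a vertical strip — one factor is $1+2\tau$ and the other is $\ge\mu\ge\frac{1+\tau}{1+2\tau}$ once $\sqrt{|K|}\tau$ is small, whence $\det\nabla\phi\ge1+\tau$ there, i.e. \eqref{main.det.on.M}; and \eqref{main.nabla.phi.bounded.by.det} follows from $|\nabla\phi_h|,|\nabla\phi_v|\le C(1+\tau)$ together with a short case check that the factor carrying $1+2\tau$ in $\det\nabla\phi_h\,\det\nabla\phi_v$ also carries it in $|\nabla\phi|$.

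The step I expect to be the main obstacle is exactly this core construction — specifically, keeping the bi‑Lipschitz constant and the two determinant bounds \eqref{main.det.on.M}, \eqref{main.det.global} under \emph{simultaneous} control. The tension is that a per‑strip transition layer wide enough to respect \eqref{main.det.global} would be so wide that the transition layers of different strips overlap with unbounded multiplicity, destroying the Lipschitz bound (the point flagged in the introduction); hence the compression must be spread globally along each fibre rather than confined near the strips, and making such a fibrewise‑global map Lipschitz is precisely what forces the one‑Lipschitz, $2\delta$‑separated, bounded‑multiplicity structure of the covering — and is why the construction is two‑dimensional (cf. Remark~\ref{remark.no.generalization.higher.dim}). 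A secondary difficulty is that the horizontal and vertical strip dilations do not commute: after $\phi_h$ the vertical strips are only approximately vertical strips, so one must verify $\phi_h$ maps them to sets regular enough to be dilated fibrewise by $\phi_v$, which for $\tau$ not small may require performing the stretching in finitely many steps with increments $\tau_i$ satisfying $\prod(1+2\tau_i)\ge1+\tau$. The remaining reductions (Lemmas~\ref{lemma.simpl.boundary.values}, \ref{Lemma.simpl.set}, \ref{Lemma.simpl.domain}) are comparatively routine, the only mild care being that the estimates in Lemma~\ref{Lemma.simpl.set} must be uniform along the exhausting sequence so that the distributional Jacobian passes to the weak limit.
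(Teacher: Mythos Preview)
Your overall architecture matches the paper's: the same chain of reductions (Lemmas~\ref{Lemma.simpl.domain}, \ref{Lemma.simpl.set}, \ref{lemma.simpl.boundary.values}) down to Proposition~\ref{Prop.Bi.Lip.simpl}, and the same reliance on the disjoint-interior covering by $1$-Lipschitz strips (Lemma~\ref{IntersectionFreeCovering}). But your core construction differs from the paper's in a way worth flagging.

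You build $\phi=\phi_v\circ\phi_h$ as a \emph{composition}: first stretch horizontal strips fibrewise, then stretch the $\phi_h$-images of the vertical strips. You correctly anticipate the price --- after $\phi_h$ the vertical strips are no longer genuine $1$-Lipschitz strips, and you are pushed toward an iteration in small $\tau$-increments. The paper sidesteps this entirely: it does \emph{not} compose. It writes $\phi=(\phi_1,\phi_2)$ with $\phi_1$ defined directly from the \emph{original} vertical strips and $\phi_2$ from the original horizontal strips, by a single explicit formula in each component. The two components are coupled only through $\det\nabla\phi=\partial_x\phi_1\,\partial_y\phi_2-\partial_y\phi_1\,\partial_x\phi_2$; the off-diagonal entries $\partial_y\phi_1,\partial_x\phi_2$ are each bounded by $2\tau$ (this is exactly where $|f_i'|,|g_j'|\le1$ is used), so their product $\le4\tau^2$ is dominated by the $+2\tau$ gain on the diagonal at any point covered by at least one strip. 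This gives \eqref{main.det.on.M} in one line, with no distortion of strips and no iteration --- your ``secondary difficulty'' simply does not arise.

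A smaller discrepancy: for the domain reduction you invoke a single bi-Lipschitz area-preserving map $\Omega\to Q$. The paper instead decomposes $\overline\Omega$ into finitely many pieces each $C^{1,1}$-diffeomorphic to $[0,1]^2$ (Lemma~\ref{lemma:decomposition.domain}), applies the square construction on each, and glues. The $C^{1,1}$ regularity (not merely bi-Lipschitz) is actually used: the proof of Lemma~\ref{Lemma.simpl.domain} needs $|\nabla\psi(\tilde\phi\circ\psi^{-1})-\nabla\psi(\psi^{-1})|\le C_\psi|\tilde\phi-\operatorname{id}|$ to recover \eqref{main.det.on.M} and \eqref{main.det.global} after conjugation, which requires $\nabla\psi$ Lipschitz. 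A merely bi-Lipschitz area-preserving map would not obviously suffice, and for multiply connected $\Omega$ a single such map to $Q$ does not exist anyway.
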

\begin{remark}\label{remark.after.Th.Bilip.Stetching}
(i) We would like to emphasize that the constant $C$ only depends on
$\Omega$. In particular (\ref{main.W.1.p}) with $p=\infty$ gives the
uniform Lipschitz estimate
\begin{align*}
\|\phi-\operatorname{id}\|_{W^{1,\infty}(\Omega)}\leq C\tau.
\end{align*}
(ii) Since, from the changes of variables formula, we always have $\int_{\Omega}\det\nabla \phi=|\Omega|$
we deduce that a smallness assumption on $|M|$
(depending on $\tau$) is needed for (\ref{main.det.on.M}) to hold true.

\end{remark}

A consequence of the results of the present work is a certain generalization of results on functionals in nonlinear elasticity, at least in the planar case. In elasticity, a natural requirement for functionals $\mathcal{F}[u]$ that associate an elastic energy to a given deformation $u:\Omega\subset \mathbb{R}^d \rightarrow \mathbb{R}^d$ is that they should penalize compression of matter to zero volume by infinite energy. A mathematical model case of such functionals is the functional
\begin{align}
\label{FunctionalElasticity}
\mathcal{F}[u]:=\int_\Omega |\nabla u|^2 + h(\det \nabla u) \,dx,
\end{align}
where $h:(0,\infty)\rightarrow [0,\infty)$ is a convex function that blows up at $0$. In particular, for power-law blowup $h(s):=s^{-p}$, one has
\begin{align}
\label{FunctionalElasticityPowerLaw}
\mathcal{F}[u]:=\int_\Omega |\nabla u|^2 + \frac{1}{(\det \nabla u)_+^p} \,dx.
\end{align}
For minimizers of such functionals, the concept of equilibrium equations has been developed by Ball \cite{Ball} as a substitute for the Euler-Lagrange equations, as it is not known whether in general the Euler-Lagrange equations are satisfied by minimizers of such functionals. In contrast to the Euler-Lagrange equations, which for a minimizer $u$ are derived by passing to the limit $\tau\searrow 0$ for some smooth compactly supported test vector field $\xi$ in the relation
\begin{align}
\label{AnsatzEulerLagrangeEquation}
\frac{\mathcal{F}[u+\tau \xi]-\mathcal{F}[u]}{\tau} \geq 0,
\end{align}
the equilibrium equations are obtained by passing to the limit $\tau \searrow 0$ in the relations
\begin{align}
\label{AnsatzEquilibriumEquations}
\frac{\mathcal{F}[u\circ (\id+\tau \xi)]-\mathcal{F}[u]}{\tau} \geq 0
\quad\quad\text{or}\quad\quad
\frac{\mathcal{F}[(\id+\tau \xi)\circ u]-\mathcal{F}[u]}{\tau} \geq 0.
\end{align}
In the ansatz for the Euler-Lagrange equation \eqref{AnsatzEulerLagrangeEquation}, one cannot exclude that the competitor deformation $u+\tau \xi$ may have infinite energy for all $\tau>0$, thereby obstructing the derivation of the Euler-Lagrange equations: In general, one cannot exclude that $\det \nabla (u+\tau\xi)$ becomes negative on a set of positive measure for every $\tau>0$.

It turns out that for functionals with power-law blowup for $\det\nabla u \searrow 0$ like \eqref{FunctionalElasticityPowerLaw}, the direct ansatz for the derivation of the equilibrium equations \eqref{AnsatzEquilibriumEquations} is successful \cite{Ball}: In particular, one directly sees that for minimizers $u$ with finite energy $\mathcal{F}[u]$, the competitor deformations $u\circ (\id+\tau \xi)$ and $(\id+\tau \xi)\circ u$ have finite energy for $\tau>0$ small enough. However, in the case of superpolynomial blowup of $h$ for $\det \nabla u \searrow 0$ -- or if blowup already occurs when $\det \nabla u$ drops below some positive threshold $\delta>0$ --~, the direct ansatz \eqref{AnsatzEquilibriumEquations} for the derivation of the equilibrium equations fails, as one again cannot exclude that the competitor maps $u\circ (\id+\tau \xi)$ and $(\id+\tau \xi)\circ u$ may have infinite energy for all $\tau>0$.

In the planar case, the stretching maps provided by Theorem \ref{BiLipschitzMapTheorem} enable us to establish the equilibrium equations also for functionals \eqref{FunctionalElasticity} with a convex differentiable function $h(s)$ which blows up superpolynomially for $s\rightarrow 0$ or even already at a positive threshold $\delta>0$. The details will be provided in a forthcoming work.

\section{The prescribed Jacobian inequality in the $L^{p}$ case}
In this section we prove the existence of solutions to the prescribed Jacobian inequality for right-hand sides of class $L^p$ (i.\,e.\ Theorem \ref{L.p}). We shall reduce Theorem~\ref{L.p} to Proposition~\ref{proposition.assuming.f.L.p.small} below, using classical existence theorems for the prescribed Jacobian equation in the smooth case.  Besides some extra estimates, the statement of Proposition~\ref{proposition.assuming.f.L.p.small} corresponds to Theorem~\ref{L.p} with the additional assumption that $\|f\|_{L^p}$ is sufficiently small.

The idea for the proof of Proposition~\ref{proposition.assuming.f.L.p.small} -- which corresponds to the main step of the proof of Theorem~\ref{L.p} -- is to inductively use Theorem \ref{BiLipschitzMapTheorem} to obtain mappings which stretch well-chosen sets (defined in terms of the superlevel sets of $f$) and to show that the (infinite) composition of these mappings converges to a solution.

\begin{proposition}\label{proposition.assuming.f.L.p.small} Let $\Omega\subset\mathbb{R}^2$ be a bounded  domain with  boundary of class $C^{1,1}$. Let $p>1$ and $1< q<(p+1)/2$.
Then there exists two constants $\widetilde{c}$ and $\widetilde{C}$ depending only on $p$, $q$ and $\Omega$ with the following property: For any $f\in L^p(\Omega)$ with
$$f\geq 0\quad \text{and}\quad \|f\|_{L^p}\leq \widetilde{c},$$ there exists
a homeomorphism $\phi:\overline{\Omega}\rightarrow \overline{\Omega}$ with
$\phi,\phi^{-1}\in W^{1,q}(\Omega;\Omega)$ satisfying
\begin{subequations}\label{equation.det.gen}
\begin{align}
\label{equat.det.simplifier}
\det\nabla \phi &\geq f && \text{in }\Omega,
\\
\label{equat.det.simplifier.boundary}
\phi &= \operatorname{id} && \text{on }\partial \Omega,
\end{align}
\end{subequations}
together with the estimates
\begin{equation}\label{est.det.simpl}
\det\nabla \phi\geq 1-\widetilde{C}\|f\|_{L^p}^{p/2} \quad\quad \text{in $\Omega$}
\end{equation}
and
\begin{equation}\label{est.psi.w.1.q}
\|\phi-\operatorname{id}\|_{W^{1,q}}\leq \widetilde{C}\|f\|_{L^p}^{p/(2q)}.
\end{equation}
Here, the inequalities \eqref{equat.det.simplifier} and \eqref{est.det.simpl} are understood in the pointwise a.\,e.\ sense for $q\geq 2$ and in the distributional sense for $q<2$.
\end{proposition}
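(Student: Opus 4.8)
The plan is to build $\phi$ as the (locally uniform) limit of an infinite composition $\phi=\lim_{n\to\infty}\psi_n\circ\psi_{n-1}\circ\cdots\circ\psi_1$, where each $\psi_k$ is a bi-Lipschitz map furnished by Theorem~\ref{BiLipschitzMapTheorem} applied on $\Omega$ to a suitable superlevel-type set of $f$. Concretely, fix a geometric sequence of thresholds, say $\lambda_k=2^k$, and set $M_k:=\{x\in\Omega: \lambda_{k-1}< f(x)\le \lambda_k\}$ for $k\ge 1$ together with $M_0:=\{0<f\le 1\}$; by Chebyshev, $|M_k|\le \lambda_{k-1}^{-p}\|f\|_{L^p}^p$, so $|M_k|$ is summable and decays geometrically. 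On the $k$-th step I want the already-constructed partial composition to have pushed forward the portion of $f$ living at level $\sim\lambda_k$ so that, relative to the current configuration, one only needs to create an extra Jacobian factor of size $\tau_k\sim\lambda_k/(\text{current determinant})\sim\lambda_k$ on (the image of) $M_k$. Since $\sqrt{|M_k|}\,\tau_k\lesssim \lambda_{k-1}^{-p/2}\lambda_k\|f\|_{L^p}^{p/2}=C\|f\|_{L^p}^{p/2}\lambda_k^{1-p/2}$, which is small and summable once $\|f\|_{L^p}$ is small (here $p>1$ is exactly what makes $1-p/2<1/2$ and, more importantly, keeps the relevant series convergent), the smallness hypothesis of Theorem~\ref{BiLipschitzMapTheorem} is met at every step and the cumulative loss $\prod_k(1-C|M_k|^{1/2}\tau_k)$ stays bounded away from $0$, giving \eqref{est.det.simpl} with exponent $p/2$.

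The key steps, in order: (1) Dyadic decomposition of $\{f>0\}$ into the $M_k$ and the Chebyshev bound on $|M_k|$. (2) Inductive construction: having defined $\Phi_{k-1}:=\psi_{k-1}\circ\cdots\circ\psi_1$, apply Theorem~\ref{BiLipschitzMapTheorem} with the set $M:=\Phi_{k-1}(M_k)$ (note $|\Phi_{k-1}(M_k)|\le (1+C\|f\|^{p/2})|M_k|$, still geometrically small) and with $\tau=\tau_k$ chosen just large enough that, after composing, $\det\nabla(\psi_k\circ\Phi_{k-1})\ge f$ holds a.e.\ on $M_k$ while the deterioration off $M_k$ is controlled by \eqref{main.det.global}; set $\Phi_k:=\psi_k\circ\Phi_{k-1}$. (3) Convergence: using the $L^\infty$ bound \eqref{main.L.infty}, $\|\Phi_k-\Phi_{k-1}\|_{L^\infty}=\|\psi_k\circ\Phi_{k-1}-\Phi_{k-1}\|_{L^\infty}\le\|\psi_k-\id\|_{L^\infty}\le C|M_k|^{1/2}\tau_k$, which is summable, so $\Phi_k\to\phi$ uniformly on $\overline\Omega$ and $\phi=\id$ on $\partial\Omega$; the same argument applied to the inverses $\Phi_k^{-1}$ gives uniform convergence of the inverses and hence that $\phi$ is a homeomorphism of $\overline\Omega$. (4) $W^{1,q}$ estimates: control $\|\nabla\Phi_k\|_{L^q}$ by a telescoping/multiplicative argument. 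The chain rule gives $\nabla\Phi_k=(\nabla\psi_k\circ\Phi_{k-1})\nabla\Phi_{k-1}$, and using \eqref{main.W.1.p} together with the change of variables (whose Jacobian factor is controlled by \eqref{main.nabla.phi.bounded.by.det} and \eqref{main.det.global}) one shows $\|\nabla\Phi_k-\nabla\Phi_{k-1}\|_{L^q}\lesssim |M_k|^{1/(2q)}\tau_k\lesssim \|f\|_{L^p}^{p/(2q)}\lambda_k^{(1-p/2)/q}\cdot(\text{geometric})$, giving a Cauchy sequence in $W^{1,q}$ and the bound \eqref{est.psi.w.1.q}; the constraint $q<(p+1)/2$ is precisely what is needed for the exponents $\lambda_k^{1/q}\cdot|M_k|^{1/(2q)}$-type terms to be summable. (5) Passage to the limit in the Jacobian inequality: for $q\ge 2$ the pointwise inequality $\det\nabla\Phi_k\ge f$ (restricted to where it has been "activated", i.e.\ on $M_1\cup\cdots\cup M_k$) passes to the limit by a.e.\ convergence of gradients along a subsequence plus the uniform lower bound \eqref{est.det.simpl}; for $1<q<2$ one instead uses weak continuity of the distributional Jacobian $\mathcal J_{\Phi_k}\to\mathcal J_\phi$ under $W^{1,q}$-convergence (valid for $q>d-1=1$) together with $\mathcal J_{\Phi_k}[\eta]\ge\int f\eta$ on test functions supported where the construction has already acted, and a monotone-type exhaustion argument over $k$ to recover the inequality against all nonnegative test functions.

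I expect the main obstacle to be the uniform $W^{1,q}$ bound in step (4), i.e.\ showing that the infinite composition does not accumulate gradient mass in $L^q$. The danger is that at step $k$ the map $\psi_k$ creates gradients of size $\sim\tau_k\sim\lambda_k\to\infty$; these are only tolerable because they are confined to the tiny set $M_k$, but under composition one must transport this estimate through $\Phi_{k-1}$, which requires the change-of-variables control $|\nabla\psi_k|\le C\det\nabla\psi_k$ from \eqref{main.nabla.phi.bounded.by.det} so that $\int_\Omega|\nabla\psi_k\circ\Phi_{k-1}|^q\,dx$ can be estimated by $\int_{M_k}|\nabla\psi_k|^q$ up to the bounded Jacobian factor, and this $L^q$ norm is $\sim |M_k|^{1/q}\tau_k^q$, whose summability over $k$ is exactly the content of the hypothesis $q<(p+1)/2$. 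Getting the bookkeeping of constants right — so that $\widetilde C$ depends only on $p,q,\Omega$ and not on the number of steps, and so that the multiplicative errors $\prod(1+C|M_k|^{1/2}\tau_k)$ telescope to something finite — is the delicate part; the rest is a fairly mechanical, if lengthy, iteration.
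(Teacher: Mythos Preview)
Your iteration scheme differs from the paper's in a crucial way, and that difference creates two genuine gaps.

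\textbf{First gap (applicability and lower bound).} With $\tau_k\sim\lambda_k=2^k$ and $|M_k|\lesssim 2^{-kp}\|f\|_{L^p}^p$, you get $\sqrt{|M_k|}\,\tau_k\sim\|f\|_{L^p}^{p/2}\,2^{k(1-p/2)}$. For $1<p\le 2$ this does \emph{not} tend to zero (it is constant for $p=2$ and blows up for $p<2$), so for large $k$ the smallness hypothesis $\sqrt{|M|}\,\tau\le c$ of Theorem~\ref{BiLipschitzMapTheorem} is violated and the theorem cannot be invoked. Even when it can, the infinite product $\prod_k(1-C\sqrt{|M_k|}\,\tau_k)$ diverges to $0$ unless $p>2$, destroying \eqref{est.det.simpl}. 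Your parenthetical ``$p>1$ makes $1-p/2<1/2$'' is true but irrelevant; summability needs $1-p/2<0$.

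\textbf{Second gap (the $W^{1,q}$ estimate).} The claimed bound $\|\nabla\Phi_k-\nabla\Phi_{k-1}\|_{L^q}\lesssim |M_k|^{1/(2q)}\tau_k$ silently drops the accumulation factor coming from $\nabla\Phi_{k-1}$. Writing $|\nabla\psi_j|^q\le C(1+C\tau_j)^{q-1}\det\nabla\psi_j$ (from \eqref{main.nabla.phi.bounded.by.det} and \eqref{main.W.1.p}) and telescoping as in the paper, the change of variables produces an extra factor $\prod_{j<k}(1+C\tau_j)^{q-1}$. With $\tau_j\sim 2^j$ this is $\sim 2^{(q-1)k(k-1)/2}$, i.e.\ super-exponential, and it overwhelms any geometric decay of $|M_k|$. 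Even ignoring this, summability of $|M_k|^{1/(2q)}\tau_k\sim 2^{k(1-p/(2q))}$ gives only $q<p/2$, not $q<(p+1)/2$; your algebra writing this as $\lambda_k^{(1-p/2)/q}$ is incorrect.

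The paper avoids both problems by using a \emph{fixed} stretching factor $\tau_0$ at every step and defining the set $M_j$ \emph{adaptively} as the image under $\phi_{j-1}\circ\cdots\circ\phi_1$ of the set where $f/\det\nabla(\phi_{j-1}\circ\cdots\circ\phi_1)$ is still $\ge 1/2$. One then shows $|M_j|\lesssim\|f\|_{L^p}^p(1+\tau_0)^{-(p-1)(j-1)}$, so $\sum\sqrt{|M_j|}$ converges for every $p>1$, and the accumulation factor becomes $(1+C\tau_0)^{n(q-1)}$, which is beaten by $|M_{n+1}|^{1/2}\sim(1+\tau_0)^{-n(p-1)/2}$ precisely when $q-1<(p-1)/2$, i.e.\ $q<(p+1)/2$.
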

\begin{proof}
\textit{Step 1 (stretching the superlevel sets of $f$).}
We choose $0<\lambda<1$ small enough so that
\begin{align}\label{relation.q.p}
2(1+\lambda)(q-1)<p-1.
\end{align}
Let $C$ be the constant (depending only on $\Omega$) appearing in the statement of Theorem \ref{BiLipschitzMapTheorem}.
Fix $\tau_0\geq 1$ large enough so that
\begin{align}\label{tau.big.enough}
C^{1/(q-1)}(1+C\tau_0)\leq (1+\tau_0)^{1+\lambda}.
\end{align}

Proceeding by induction, we claim that, taking $\|f\|_{L^p}\leq \widetilde{c}$  (where $\widetilde{c}$ will be chosen small enough and will only depend on $p,q$ and $\Omega$), we can construct, for every $j\geq 1$, a bi-Lipschitz map
$\phi_j:\overline{\Omega}\rightarrow \overline{\Omega}$ such that
\begin{align}
\phi_j&=\operatorname{id}\quad \text{on $\partial\Omega,$}
\\
\|\phi_j-\operatorname{id}\|_{W^{1,t}(\Omega)}&\leq C\tau_0|M_j|^{1/(2t)}\quad
\text{for every }1\leq t\leq \infty,\label{main.W.1.p.i}
\\
\|\phi_{j} -\operatorname{id}\|_{L^{\infty}}&\leq C\tau_0|M_{j}|^{1/2},\label{main.L.infty.i}\\
|\nabla\phi_j|&\leq C\det \nabla \phi_j\quad \text{a.\,e.\ in }\Omega,
\label{main.nabla.phi.bounded.by.det.i}\\
\det \nabla \phi_j&\geq 1+\tau_0\quad \text{a.\,e.\ in }M_j,\label{main.det.on.M.i}
\\
\det \nabla \phi_j&\geq 1-C\tau_0|M_j|^{1/2}\quad \text{a.\,e.\ in $\Omega,$}
\label{main.det.global.i}
\end{align}
where
\begin{align}\label{definition.M.j}
M_j:=\phi_{j-1}\circ\cdots \circ\phi_1\left(
\underbrace{
\bigcap_{0\leq k\leq j-1}
\left\{ \frac{f}{\det \nabla (\phi_{k}\circ \ldots \circ \phi_1)}\geq \frac{1}{2}\right\}}_{=:A_j}
\right)
\end{align}
with the convention that $M_1=\{ f\geq 1/2\}$ and $A_1=\{f\geq \frac{1}{2}\}$ and where $C=C(\Omega)$ is as before the constant in the statement of Theorem \ref{BiLipschitzMapTheorem}.
\smallskip

\textit{Step 1.1 (start of induction).}
First, since obviously
\begin{align*}
|\{f\geq 1/2\}|\rightarrow 0\quad \text{as $\|f\|_{L^p}\rightarrow 0$,}
\end{align*}
taking $\|f\|_{L^p}$  small enough,
we can apply Theorem \ref{BiLipschitzMapTheorem} with $\tau=\tau_0$ and
$M=\{ f\geq 1/2\}$
to get $\phi_1.$\smallskip

\textit{Step 1.2 (induction step).}
Assume that $\phi_1,\cdots,\phi_{i-1}$ have been constructed. We now show how to obtain $\phi_i$.
Again the existence of $\phi_i$ will be a direct consequence of Theorem \ref{BiLipschitzMapTheorem} once we have shown that the measure of $M_i$ is small enough so that $\max\{|M_i|,\sqrt{|M_i|}\tau\}\leq c$ is satisfied.
To see that, let us define
\begin{align*}
I_i:=&
\int_{A_i}
\frac{ f^p}{\left(\det \nabla (\phi_{i-1} \circ \ldots \circ \phi_1)\right)^{p-1}}~dx.
\end{align*}
Hence, as by the definition of $A_i$ (see \eqref{definition.M.j}) we obviously have $A_i\subset A_{i-1}$, using (\ref{main.det.on.M.i}) with $j=i-1$ we deduce
\begin{align*}
I_i=&
\int_{A_i}
\frac{1}{(\det \nabla \phi_{i-1}(\phi_{i-2}\circ \ldots \circ \phi_1))^{p-1}}
\cdot
\frac{ f^p}{\left(\det \nabla (\phi_{i-2} \circ \ldots \circ \phi_1)\right)^{p-1}} ~dx\\
\leq&
\int_{A_{i-1}}
\frac{1}{(\det \nabla \phi_{i-1}(\phi_{i-2}\circ \ldots \circ \phi_1))^{p-1}}
\cdot
\frac{ f^p}{\left(\det \nabla (\phi_{i-2} \circ \ldots \circ \phi_1)\right)^{p-1}} ~dx
\\
\leq&
\frac{1}{(1+\tau_0)^{p-1}}
\int_{A_{i-1}}
\frac{ f^p}{\left(\det \nabla (\phi_{i-2} \circ \ldots \circ \phi_1)\right)^{p-1}} ~dx
\\
=&
\frac{1}{(1+\tau_0)^{p-1}} I_{i-1}.
\end{align*}
Thus
\begin{align*}
I_i\leq \frac{1}{(1+\tau_0)^{(p-1)(i-1)}} \int_\Omega f^p ~dx.
\end{align*}
Moreover
\begin{align*}
|M_i|=&|\phi_{i-1}\circ\cdots\circ \phi_{1}(A_i)|=2^p\int_{A_i}(1/2)^p\det\nabla(\phi_{i-1}\circ\cdots\circ \phi_{1})~dx\\
\leq&2^p\int_{A_i}\frac{f^p}{(\det \nabla (\phi_{i-1}\circ \ldots \circ \phi_1))^p}\det\nabla(\phi_{i-1}\circ\cdots\circ \phi_{1})~dx\\
=&2^pI_i
\end{align*}
and thus
\begin{align}\label{decay.M.i}
 |M_i|\leq \frac{\|2f\|_{L^p}^{p}}{(1+\tau_0)^{(p-1)(i-1)}}\leq \|2f\|_{L^p}^{p}.
\end{align}
Choosing the upper bound $\tilde c$ for $\|f\|_{L^p}$ in the assumptions of the proposition small enough (independently of $i$), we therefore can apply Theorem \ref{BiLipschitzMapTheorem} to $\tau=\tau_0$ and $M=M_i$ to get $\phi_i.$\smallskip

\textit{Step 1.3 (additional estimates).}
By (\ref{decay.M.i}) we deduce that
\begin{align}
\sum_{i=1}^{\infty} |M_i|^{1/2}\leq&\sum_{i=1}^{\infty}\frac{\|2f\|_{L^p}^{p/2}}{(1+\tau_0)^{(p-1)(i-1)/2}}
=\frac{\|2f\|_{L^p}^{p/2}}{1-(1+\tau_0)^{-(p-1)/2}}.\label{M.k.in.l.2}
\end{align}
Next, using (\ref{main.det.global.i}) and (\ref{M.k.in.l.2}), we have for every $m\geq i\geq 1$
\begin{align}\det\nabla (\phi_m\circ\cdots\circ \phi_i)
=&\det \nabla \phi_m (\phi_{m-1}\circ\ldots \phi_i) \cdot
\ldots \cdot \det \nabla \phi_{i+1}(\phi_i) \cdot \det \nabla \phi_i\nonumber
\\
\geq&\prod_{k=i}^m (1-C\tau_0|M_k|^{1/2})\geq 1-C\tau_0\sum_{k=i}^{m}|M_k|^{1/2}\nonumber\\
\geq& 1-C\tau_0\frac{\|2f\|_{L^p}^{p/2}}{1-(1+\tau_0)^{-(p-1)/2}}.\label{geq.one.half}
\end{align}
In particular, choosing the upper bound $\tilde c$ for $\|f\|_{L^p}$ smaller if necessary, we deduce from the previous inequality that for every $m\geq i\geq 1$
\begin{align}\label{det.nabla.u.m....u.1.geg.half}
\det\nabla (\phi_m\circ\cdots\circ \phi_i)\geq 1/2\quad \text{a.e in $\Omega.$}
\end{align}
Finally, taking again $\tilde c$ smaller if necessary (note that this smallness will only depend on $p,q$ and $\Omega$), we get using (\ref{main.det.global.i}), (\ref{decay.M.i}), and (\ref{M.k.in.l.2})
\begin{align}&\prod_{k=1}^{m}\left\|\frac{1}{\det\nabla \phi_k}\right\|_{L^{\infty}}\leq \prod_{k=1}^{m}\frac{1}{1-C\tau_0|M_k|^{1/2}}=\prod_{k=1}^{m}\left(1+\frac{C\tau_0|M_k|^{1/2}}{1-C\tau_0|M_k|^{1/2}}\right)\nonumber\\
\leq&\prod_{k=1}^{m}\left(1+2C\tau_0|M_k|^{1/2}\right)
\leq \exp\left(\sum_{k=1}^{m}2C\tau_0|M_k|^{1/2}\right)\leq 2.\label{estimate.prod.1.over.det}
\end{align}

\textit{Step 2 (properties of the limiting mapping).} In the remaining steps we will show that
\begin{align*}
\phi:=\lim_{m\rightarrow \infty}\phi_m\circ\cdots\circ\phi_1
\end{align*}
is well-defined and has all the desired properties, namely that $\phi:\overline{\Omega}\rightarrow \overline{\Omega}$ is a homeomorphism
with $\phi,\phi^{-1}\in W^{1,q}(\Omega;\Omega)$ for which (\ref{equation.det.gen}), (\ref{est.det.simpl}), and (\ref{est.psi.w.1.q}) hold.\smallskip

\textit{Step 2.1.} We first prove that $\{\nabla(\phi_m\circ\cdots\circ \phi_1)\}_{m\in \mathbb{N}}$ is a Cauchy sequence in $L^{q}.$
First for every integer $n$,
\begin{align*}
&\|\nabla (\phi_{n+1} \circ \ldots \circ \phi_1)-\nabla (\phi_n \circ \ldots \circ \phi_1)\|_{L^{q}}^q
\\
&\leq \int_{\Omega}\left|\nabla \phi_{n+1}(\phi_n\circ \ldots \circ \phi_1)-\Id\right|^q
\left|\nabla (\phi_{n}\circ \ldots \circ \phi_1)\right|^q~dx
\\
&\leq \int_{\Omega}\left|\nabla \phi_{n+1}(\phi_n\circ \ldots \circ \phi_1)-\Id\right|^q
\prod_{k=1}^n\left|\nabla \phi_{k}(\phi_{k-1}\circ\ldots\circ \phi_1)\right|^q~dx
\\
&=\int_{\Omega}\left|\nabla \phi_{n+1}(\phi_n\circ \ldots \circ \phi_1)-\Id\right|^q
\left(\prod_{k=1}^n\frac{\left|\nabla \phi_{k}(\phi_{k-1}\circ\ldots\circ \phi_1)\right|^q}
{\det \nabla \phi_k (\phi_{k-1}\circ \ldots \circ \phi_1)}\right)
\det \nabla (\phi_{n}\circ \ldots \circ \phi_1)~dx
\\
&\overset{\eqref{main.nabla.phi.bounded.by.det.i},\eqref{main.W.1.p.i}}{\leq~~~~~~}
\int_{\Omega}\left|\nabla \phi_{n+1}
(\phi_n\circ \ldots \circ \phi_1)-\Id\right|^q\left(\prod_{k=1}^nC(1+C\tau_0)^{q-1}\right)
\det \nabla (\phi_{n}\circ \ldots \circ \phi_1)~dx
\\
&=\int_{\Omega}\left|\nabla \phi_{n+1}-\Id\right|^qC^{n} (1+C\tau_0)^{n(q-1)}~dx
\\
&\overset{\eqref{main.W.1.p.i}}{\leq\,}
(C\tau_0)^q|M_{n+1}|^{1/2}
C^{n} (1+C\tau_0)^{n(q-1)}
\\
&\overset{\eqref{tau.big.enough}}{\leq\,}
(C\tau_0)^q|M_{n+1}|^{1/2}
(1+\tau_0)^{n(1+\lambda)(q-1)}
\\
&\overset{\eqref{decay.M.i}}{\leq\,}
(C\tau_0)^q\|2f\|_{L^p}^{p/2}(1+\tau_0)^{n[(1+\lambda)(q-1)-(p-1)/2]}.
\end{align*}
Taking the $q$-th root in the previous inequality we get that, for any $m\geq n+1$,
\begin{align}
&\|\nabla (\phi_{m} \circ \ldots \circ \phi_1)-\nabla (\phi_n \circ \ldots \circ \phi_1)\|_{L^{q}}\nonumber\\
\leq &\sum_{k=n}^{m-1}\|\nabla (\phi_{k+1} \circ \ldots \circ \phi_1)-\nabla (\phi_k \circ \ldots \circ \phi_1)\|_{L^{q}}\nonumber\\
\leq & C\tau_0\|2f\|_{L^p}^{p/(2q)}\sum_{k=n}^{m-1}(1+\tau_0)^{k[(1+\lambda)(q-1)-(p-1)/2]/q}\label{est.nabla.phi.m.W.1.q}
\end{align}
which proves that
$\{\nabla(\phi_m\circ\cdots\circ \phi_1)\}_{m\in \mathbb{N}}$ is a Cauchy sequence in $L^{q}$ since by (\ref{relation.q.p}) we know that
\begin{align*}
 (1+\lambda)(q-1)-(p-1)/2<0.
\end{align*}

\textit{Step 2.2.}
Since $\phi_k=\operatorname{id}$ on $\partial \Omega$ for every $k$, we deduce from the Poincar\'e inequality and from (\ref{est.nabla.phi.m.W.1.q}) that
$(\phi_m\circ\cdots\circ \phi_1)_{m\in \mathbb{N}}$ is a Cauchy sequence in $W^{1,q}(\Omega)$. Hence $\phi_m\circ\cdots\circ\phi_1$ converges to some $\phi$ in $W^{1,q}$ as $m\rightarrow \infty$.
Using (\ref{est.nabla.phi.m.W.1.q}) with $n=0$ and again the Poincar\'e inequality, we directly get (\ref{est.psi.w.1.q}).
By (\ref{main.L.infty.i}), we have for every $m\geq n+1$
\begin{align*}
\|\phi_{m} \circ \ldots \circ \phi_1-\phi_n \circ \ldots \circ \phi_1\|_{L^{\infty}}
\leq &\sum_{k=n}^{m-1}\|\phi_{k+1} -\operatorname{id}\|_{L^{\infty}}
\leq \sum_{k=n}^{m-1}C\tau_0|M_{k+1}|^{1/2}.
\end{align*}
Using (\ref{M.k.in.l.2}), this implies that $(\phi_{m} \circ \ldots \circ \phi_1)_m$ is a Cauchy sequence in $L^{\infty}$. Thus, $\phi_{m} \circ \ldots \circ \phi_1$ converges to $\phi$ uniformly and we have $\phi\in C^{0}(\overline{\Omega})$.
\smallskip

\textit{Step 2.3 (existence and integrability of $\phi^{-1}$).}
First, for every $m\in \mathbb{N}$, we have (recall that we are working in 2 dimensions)
\begin{align*}
&||\nabla [(\phi_m \circ \ldots \circ \phi_1)^{-1}]||_{L^q}=||(\nabla (\phi_m \circ \ldots \circ \phi_1))^{-1}((\phi_m\circ\cdots\circ \phi_1)^{-1})||_{L^q}\\
=&\left(\int_{\Omega}\left|\frac{\operatorname{adj}(\nabla (\phi_m \circ \ldots \circ \phi_1))}{\det\nabla (\phi_m \circ \ldots \circ \phi_1)}\right|^q((\phi_m\circ\cdots\circ \phi_1)^{-1})\,dx\right)^{1/q}
\\
=&\left(\int_{\Omega}\left|\frac{\nabla (\phi_m \circ \ldots \circ \phi_1)}{\det\nabla (\phi_m \circ \ldots \circ \phi_1)}\right|^q((\phi_m\circ\cdots\circ \phi_1)^{-1})\,dx\right)^{1/q}
\\
=&\left(\int_{\Omega}\frac{|\nabla (\phi_m \circ \ldots \circ \phi_1)|^q}{|\det\nabla (\phi_m \circ \ldots \circ \phi_1)|^{q-1}}\,dx\right)^{1/q}
\\
\leq& ||\nabla (\phi_m \circ \ldots \circ \phi_1)||_{L^q}\left(\prod_{i=1}^m\left\|\frac{1}{\det\nabla \phi_i}\right\|_{L^{\infty}}\right)^{(q-1)/q}.
\end{align*}
Using (\ref{estimate.prod.1.over.det}) we directly deduce from the previous inequality that the sequence $((\phi_m\circ \ldots \circ \phi_1)^{-1})_{m\in \mathbb{N}}$ is bounded in $W^{1,q}$ (uniformly in $m$) and hence converges weakly (up to a subsequence) to some $\xi\in W^{1,q}$. Moreover since trivially
\begin{align*}
\|\phi_j^{-1}-\operatorname{id}\|_{L^{\infty}}=\|\phi_j-\operatorname{id}\|_{L^{\infty}}
\end{align*}
proceeding exactly as Step 2.2 we know that
$\xi\in C^0(\overline{\Omega})$
and
\begin{align*}
\|(\phi_{m} \circ \ldots \circ \phi_1)^{-1}-\xi\|_{L^{\infty}}\rightarrow 0\quad \text{as $m\rightarrow \infty.$}
\end{align*}
Moreover from the uniform convergence we get that, for every $x\in \overline{\Omega}$,
\begin{align*}
x=\lim_{m\rightarrow \infty}(\phi_{m} \circ \ldots \circ \phi_1)\circ(\phi_{m} \circ \ldots \circ \phi_1)^{-1}(x)=(\phi\circ \xi)(x)\end{align*}
and similarly $x=(\xi\circ\phi)(x)$. Hence $\xi=\phi^{-1}$ and $\phi:\overline{\Omega}\rightarrow \overline{\Omega}$ is an homeomorphism.
Finally, since for every $m$ we have $\phi_{m} \circ \ldots \circ \phi_1=\operatorname{id}$ on $\partial \Omega$, passing to the limit we get
\begin{align*}
\phi=\operatorname{id}\quad \text{on $\partial \Omega.$}
\end{align*}

\textit{Step 2.4 (proving the assertions about $\det\nabla \phi$).}  We now show that $\det\nabla \phi\geq  f$ in $\Omega$ together with (\ref{est.det.simpl}), where, as always, the inequalities are understood in the weak sense for $q<2$ and in the pointwise a.\,e.\ sense for $q\geq 2$.
We first claim that we have a.\,e.\ in $\Omega$
\begin{align}\label{det.up.to.m}
\det\nabla(\phi_m\circ\cdots\circ \phi_1)\geq f\chi_{\Omega\setminus A_m}
\end{align}
where we recall that
\begin{align*}
A_m= \bigcap_{0\leq k\leq m-1}
\left\{ \frac{ f}{\det \nabla (\phi_{k}\circ \ldots \circ \phi_1)}\geq \frac{1}{2}\right\}
\end{align*}
with the convention that $A_1=\{ f\geq 1/2\}$ and $A_0=\Omega.$
Indeed  noticing  that, by the very definition of the sets $A_m$,
$$\det
\nabla (\phi_{i-1}\circ\cdots\circ \phi_1)>2 f\quad \text{on $A_{i-1}\setminus A_i$},$$
and using (\ref{det.nabla.u.m....u.1.geg.half}),
we get that, a.\,e.\ in $\Omega,$ and for every $1\leq i\leq m$,
\begin{align*}
\det\nabla(\phi_m\circ\cdots\circ \phi_1)&=\det\nabla(\phi_m\circ\cdots\circ \phi_i)(\phi_{i-1}\circ\cdots\circ \phi_1)\cdot \det\nabla (\phi_{i-1}\circ\cdots\phi_1)\\
&\geq \det\nabla(\phi_m\circ\cdots\circ \phi_i)(\phi_{i-1}\circ\cdots\circ \phi_1)\cdot 2f \chi_{A_{i-1}\setminus A_i}\\
&\geq f \chi_{A_{i-1}\setminus A_i},
\end{align*}
from which (\ref{det.up.to.m}) directly follows.
Moreover, using (\ref{definition.M.j}), (\ref{decay.M.i}) and (\ref{estimate.prod.1.over.det}), we obtain
\begin{align}\label{A.i.tends.to.zero}|A_i|=|\phi_1^{-1}\circ\cdots\circ \phi_{i-1}^{-1}(M_i)|\leq 2|M_i|\rightarrow 0\quad \text{as $i\rightarrow \infty$}.
\end{align}
On one hand, recalling that $\phi_m\circ\cdots\circ\phi_1$ converges strongly to $\phi$ both  in $W^{1,q}$ and  $L^{\infty}$ we get that, for every $\eta\in C^{\infty}_{cpt}(\Omega)$,
\begin{align*}\int_{\Omega}\det[\nabla(\phi_m\circ\cdots\circ\phi_1)]\eta ~dx &=
-\frac{1}{2}\int_\Omega \langle\adj \nabla(\phi_m\circ\cdots\circ\phi_1) \cdot \phi_m\circ\cdots\circ\phi_1), \nabla \eta\rangle ~dx
\\
&\rightarrow -\frac{1}{2}\int_\Omega \langle\adj \nabla\phi \cdot \phi, \nabla \eta \rangle ~dx\quad \text{as $m\rightarrow \infty$}
\end{align*} or, equivalently, using our notation,
\begin{align*}\mathcal{J}_{\phi_m\circ\cdots\circ\phi_1}[\eta]\rightarrow \mathcal{J}_{\phi}[\eta]\quad \text{as $m\rightarrow \infty$}.
\end{align*}
On the other hand, using (\ref{det.up.to.m}) and (\ref{A.i.tends.to.zero}) we obtain for every nonnegative $\eta \in C^{\infty}_{cpt}(\Omega)$,
\begin{align*}\int_{\Omega}\det[\nabla(\phi_m\circ\cdots\circ\phi_1)]\eta\geq \int_{\Omega}f\chi_{\Omega\setminus A_m}\eta ~dx\rightarrow \int_{\Omega}f\eta ~dx.
\end{align*}
Combining the two previous equations we get that (in the sense of the distribution for $q<2$ and in the a.\,e.\ pointwise sense for $q\geq 2$)
\begin{align*}
\det\nabla \phi\geq f.
\end{align*}
Finally (\ref{est.det.simpl}) is a direct consequence of (\ref{geq.one.half}). This concludes the proof.\smallskip
\end{proof}

With the help of the previous result we now establish Theorem \ref{L.p}.

\begin{proof}[Proof of Theorem \ref{L.p}]
\textit{Step 1 (preliminaries).}
Fix $\delta>0$ small enough so that (recall that by assumption $\int_\Omega f<|\Omega|$)
\begin{align}
(1+\delta)\int_{\Omega}f~dx<|\Omega|(1-\delta)\label{choice.delta}
\end{align}
and let (extending $f$ by $0$ outside $\Omega$)
\begin{align*}
f_{\epsilon}:=(1+\delta)\rho_{\epsilon}\ast f+l_{\epsilon}
\end{align*}
where $\rho_{\epsilon}$ is the usual convolution kernel and
where $l_{\epsilon}\in \mathbb{R}$ is chosen so that
\begin{align}
\int_{\Omega}f_{\epsilon}\,dx=|\Omega|\label{int.cond.f.epsilon}.
\end{align}
Note that by (\ref{choice.delta}) we have
\begin{align}\label{l.epsilon.geg.delta}l_{\epsilon}\geq \delta.
\end{align}
Hence, since $f_{\epsilon}\in C^{\infty}(\overline{\Omega}),$ since $f_{\epsilon}\geq l_{\epsilon}\geq \delta>0$ and since the boundary of $\Omega$ is $C^{1,1}$, by (\ref{int.cond.f.epsilon}) there exists (see Theorem 5 in \cite{Dac-Moser}) a map $\psi_{\epsilon}\in \operatorname{Diff}^{1,1}(\overline{\Omega};\overline{\Omega})$ satisfying
\begin{subequations}
\label{equation.for.psi.epsilon}
\begin{align}
\det\nabla \psi_{\epsilon}&=f_{\epsilon} &&\text{in }\Omega,
\\
\psi_{\epsilon}&=\operatorname{id} &&\text{on }\partial \Omega.
\end{align}
\end{subequations}
Define the measurable set $A_{\epsilon}\subset \Omega$ by
\begin{align*}A_{\epsilon}:=\{x\in \Omega: f_{\epsilon}(x)\leq (1+\delta)f(x)\}.
\end{align*}
Using (\ref{l.epsilon.geg.delta}) and the definition of $f_{\epsilon}$ we deduce that
\begin{align}\label{A.epsilon.goes.to.0}
|A_{\epsilon}|\rightarrow 0\quad \text{as $\epsilon\rightarrow 0$.}
\end{align}

\textit{Step 2.}
We claim that, for every $\epsilon>0$ small enough, we can find a homeomorphism $\varphi_{\epsilon}:\overline{\Omega}\rightarrow \overline{\Omega}$ with
$\varphi_{\epsilon},\varphi_{\epsilon}^{-1}\in W^{1,q}(\Omega;\Omega)$ satisfying
\begin{subequations}
\label{equation.for.varphi.epsilon}
\begin{align}
\det\nabla \varphi_{\epsilon}
&\geq
\frac{f(\psi_{\epsilon}^{-1})}{f_{\epsilon}(\psi_{\epsilon}^{-1})}\chi_{A_{\epsilon}}(\psi_{\epsilon}^{-1})&& \text{in }\Omega,
\\
\varphi_{\epsilon}&=\operatorname{id} && \text{on }\partial \Omega,
\end{align}
\end{subequations}
and
\begin{align}\label{borne.inf.det.varphi.eps}
\det\nabla \varphi_{\epsilon}&\geq \frac{1}{1+\delta} &&\text{in }\Omega,
\end{align}
where both inequalities are understood in the pointwise a.\,e.\ sense for $q\geq 2$ and in the weak sense for $q<2$.
Indeed using $\psi_{\epsilon}$ as a change of variables, recalling that $f_{\epsilon}\geq \delta$ in $\Omega$ and using (\ref{A.epsilon.goes.to.0}) we get
\begin{align*}
\int_{\Omega}\left|\frac{f(\psi_{\epsilon}^{-1})}{f_{\epsilon}(\psi_{\epsilon}^{-1})}\chi_{A_{\epsilon}}(\psi_{\epsilon}^{-1})\right|^p~dx
&=\int_{A_{\epsilon}}\frac{|f|^p}{|f_{\epsilon}|^{p-1}}~dx\\
&\leq \frac{1}{\delta^{p-1}}\|f\|_{L^p(A_{\epsilon})}^p\rightarrow 0\quad \text{as $\epsilon\rightarrow 0$.}
\end{align*}
Hence we can use Proposition \ref{proposition.assuming.f.L.p.small} (with the right-hand side equal to $\frac{f(\psi_{\epsilon}^{-1})}{f_{\epsilon}(\psi_{\epsilon}^{-1})}\chi_{A_{\epsilon}}(\psi_{\epsilon}^{-1})$) and get the claim. Note that in particular (\ref{borne.inf.det.varphi.eps}) follows directly from (\ref{est.det.simpl}) and (\ref{A.epsilon.goes.to.0}).
\smallskip

\textit{Step 3 (conclusion).}
We claim that, for $\epsilon$ small enough,
\begin{align*}
\phi=\varphi_{\epsilon}\circ \psi_{\epsilon}
\end{align*}
satisfies all the wished properties.
First,
$\phi:\overline{\Omega}\rightarrow \overline{\Omega}$ is an homeomorphism, is identically the identity on $\partial \Omega$ and $\phi,\phi^{-1}\in W^{1,q}(\Omega;\Omega).$
It remains to show
\begin{align}\label{det.phi.step.1.3}
\det\nabla \phi\geq f \quad\quad\text{in $\Omega$},
\end{align}
where the inequality is required to hold pointwise almost everywhere for $q\geq 2$ and where the inequality is understood in the weak sense for $q<2$.

We first prove (\ref{det.phi.step.1.3}) when $q\geq 2$ (in order to expose the idea more clearly).
On one hand, using (\ref{equation.for.psi.epsilon}) and (\ref{equation.for.varphi.epsilon}), we get a.\,e.\ in $\Omega$
\begin{align*}
\det\nabla\phi
=\det\nabla\varphi_{\epsilon}(\psi_{\epsilon})\det\nabla\psi_{\epsilon}
\geq f\chi_{A_{\epsilon}}
\end{align*}
On the other hand, by definition of $A_{\epsilon}$ and by (\ref{borne.inf.det.varphi.eps}) we get
a.\,e.\ in $\Omega$
\begin{align*}
\det\nabla\phi
=\det\nabla\varphi_{\epsilon}(\psi_{\epsilon})\det\nabla\psi_{\epsilon}
\geq \frac{f_{\epsilon}}{1+\delta}
\geq f\chi_{\Omega\setminus A_{\epsilon}}.
\end{align*}
The combination of the previous two inequalities gives (\ref{det.phi.step.1.3}).

We finally prove (\ref{det.phi.step.1.3}) when $q<2$, namely
\begin{align*} \mathcal{J}_{\phi}[\eta]=-\frac{1}{2}\int_{\Omega}\langle \operatorname{adj}\nabla \phi\cdot\phi,\nabla \eta\rangle~dx\geq \int_{\Omega}f\eta ~dx\quad\text{for every $\eta\in C^{\infty}_{cpt}(\Omega;[0,\infty))$}.
\end{align*}
By classical properties of $\operatorname{adj}$ and changing the variables we get
\begin{align*}
&\int_{\Omega}\langle\operatorname{adj}\nabla\phi\cdot \phi,\nabla \eta\rangle ~dx
\\
=&\int_{\Omega}\langle\operatorname{adj}\left[\nabla\varphi_{\epsilon}(\psi_{\epsilon})\cdot\nabla \psi_{\epsilon}\right]\cdot \varphi_{\epsilon}(\psi_{\epsilon}),\nabla \eta\rangle ~dx\\
=&\int_{\Omega}\frac{\langle\operatorname{adj}\left[\nabla\varphi_{\epsilon}\cdot\nabla \psi_{\epsilon}(\psi_{\epsilon}^{-1})\right]\cdot \varphi_{\epsilon},\nabla \eta(\psi_{\epsilon}^{-1})\rangle}{\det\nabla \psi_{\epsilon}(\psi_{\epsilon}^{-1})} ~dx\\
=&\int_{\Omega}\frac{\langle\operatorname{adj}\left[\nabla\psi_{\epsilon}(\psi_{\epsilon}^{-1})\right]\cdot\operatorname{adj}
\left[\nabla\varphi_{\epsilon}\right]\cdot \varphi_{\epsilon},\nabla \eta(\psi_{\epsilon}^{-1})\rangle}{\det\nabla \psi_{\epsilon}(\psi_{\epsilon}^{-1})} ~dx\\
=&\int_{\Omega}\frac{\langle\operatorname{adj}\left[\nabla\psi_{\epsilon}(\psi_{\epsilon}^{-1})\right]\cdot\operatorname{adj}
\left[\nabla\varphi_{\epsilon}\right]\cdot \varphi_{\epsilon},(\nabla\psi_{\epsilon})^t(\psi_{\epsilon}^{-1})\cdot\nabla [\eta\circ\psi_{\epsilon}^{-1}])\rangle}{\det\nabla \psi_{\epsilon}(\psi_{\epsilon}^{-1})} ~dx
\\
=&\int_{\Omega}\frac{\langle\operatorname{adj}
\left[\nabla\varphi_{\epsilon}\cdot \right]\cdot \varphi_{\epsilon},\operatorname{adj}\left[\nabla\psi_{\epsilon}(\psi_{\epsilon}^{-1})\right]^t\cdot
(\nabla\psi_{\epsilon})^t(\psi_{\epsilon}^{-1})\cdot\nabla [\eta\circ\psi_{\epsilon}^{-1}])\rangle}{\det\nabla \psi_{\epsilon}(\psi_{\epsilon}^{-1})} ~dx
\\
=&\int_{\Omega}\langle\operatorname{adj}
\left[\nabla\varphi_{\epsilon} \right]\cdot \varphi_{\epsilon}, \nabla[\eta\circ\psi_{\epsilon}^{-1}]\rangle ~dx.
\end{align*}
Then, for every $\rho\in C^{\infty}(\overline{\Omega};[0,1])$ we get, using (\ref{equation.for.varphi.epsilon}) and (\ref{borne.inf.det.varphi.eps}) in the weak sense and then changing the variables,
\begin{align*}
&-\frac{1}{2}\int_{\Omega}\langle\operatorname{adj}
\left[\nabla\varphi_{\epsilon}\right]\cdot \varphi_{\epsilon}, \nabla[\eta\circ\psi_{\epsilon}^{-1}]\rangle ~dx
\\
=&-\frac{1}{2}\int_{\Omega}\langle\operatorname{adj}
\left[\nabla\varphi_{\epsilon} \right]\cdot \varphi_{\epsilon}, \nabla[(\rho\eta)\circ\psi_{\epsilon}^{-1}]\rangle ~dx-\frac{1}{2}\int_{\Omega}\langle\operatorname{adj}
\left[\nabla\varphi_{\epsilon} \right]\cdot \varphi_{\epsilon}, \nabla[((1-\rho)\eta)\circ\psi_{\epsilon}^{-1}]\rangle ~dx
\\
\geq &\int_{\Omega} \left(\frac{f}{\det\nabla \psi_{\epsilon}}\rho\eta\chi_{A_{\epsilon}}
\right)\circ\psi_{\epsilon}^{-1}~dx+\int_{\Omega} \frac{1}{1+\delta}((1-\rho)\eta)\circ\psi_{\epsilon}^{-1}~dx
\\
=&\int_{\Omega}f\rho\eta\chi_{A_{\epsilon}}~dx+\int_{\Omega} \frac{f_{\epsilon}}{1+\delta}(1-\rho)\eta~dx
\end{align*}
By approximation, the above equation is valid for $\rho=\chi_{A_{\epsilon}}$. Hence, by definition of $A_{\epsilon}$ we get
combining the last two equations,
\begin{align*}
&-\frac{1}{2}\int_{\Omega}\langle\operatorname{adj}\nabla\varphi\cdot \varphi,\nabla \eta\rangle ~dx
\geq \int_{\Omega}f\chi_{A_{\epsilon}}\eta~dx+\int_{\Omega} \frac{f_{\epsilon}}{1+\delta}\chi_{\Omega\setminus A_{\epsilon}}\eta~dx\\
\geq &\int_{\Omega}f\chi_{A_{\epsilon}}\eta~dx+\int_{\Omega} f\chi_{\Omega\setminus A_{\epsilon}}\eta~dx
=\int_{\Omega}f\eta ~dx,
\end{align*}
which proves (\ref{det.phi.step.1.3}) and ends the proof.

\end{proof}

\section{The prescribed Jacobian in the $L^{\infty}$ case}

In this section we prove Theorem \ref{theorem.det.nabla.u.geq.f}. This idea is the following: First, by convolution and classical
results for the prescribed Jacobian equation in the smooth case, we obtain a
smooth map $\varphi$ such that $\varphi=\operatorname{id}$ on $\partial \Omega$
and $\det\nabla \varphi> f$ outside a set $M$ of small measure.
Postcomposing $\varphi$ with the stretching map (from Theorem \ref{BiLipschitzMapTheorem}) $\phi_{\tau,\varphi(M)}$
for some well-chosen $\tau$ then yields the desired solution to the prescribed
Jacobian inequality.


\begin{proof}[Proof of Theorem \ref{theorem.det.nabla.u.geq.f}]~

\textit{Step 1 (sharp regularity).} A very simple example shows that there exists a nonnegative function $f\in L^{\infty}(\Omega)$ with $\int_{\Omega}f<|\Omega|$ such that no solution to (\ref{Jacobian.inequality.theorem}) can be of class $C^1$:
Let $M\subset \Omega$ be an open dense (in $\Omega$) set with $|M|<|\Omega|/2$
and let $f=2\chi_{M}.$ We argue by contradiction and assume that there exists a
solution $\phi\in C^1$ of (\ref{Jacobian.inequality.theorem}). Then we would have
$\det\nabla \phi\geq 2$ a.\,e.\ in $M$. By continuity of $\det\nabla \phi$ and the fact that $M$ is open and dense, the previous inequality would imply $\det\nabla \phi\geq 2$ everywhere in $\Omega$, which contradicts $\int_{\Omega}\det\nabla \phi\,dx=|\Omega|$.
\smallskip

Let us now turn to the existence part of our theorem.

\textit{Step 2 (preliminaries).} Define
\begin{align*}
\beta:=\frac{|\Omega|-\int_{\Omega}f\,dx}{|\Omega|}\in (0,1].
\end{align*}
Take $\tau_{0}$ big enough so that
\begin{equation} \label{tau.assez.grand}
\frac{(1+\tau_{0})\beta}{2}\geq \|f\|_{L^{\infty}}.
\end{equation}
Next choose $\epsilon_{0}>0$ small enough so that
\begin{equation} \label{epsilon.assez.petit}
\left(1-C\tau_{0}\sqrt{(\|f\|_{L^{\infty}}+1)\epsilon_{0}}\right)
\left(\frac{\beta}{2}+y\right)\geq y
\quad \text{for every $y\in [0,\|f\|_{L^{\infty}}];$}
\end{equation}
taking $\epsilon_{0}$ even smaller we can moreover assume that
\begin{equation}
\label{volume.assez.petit}
(\|f\|_{L^{\infty}}+1)\epsilon_{0},\sqrt{(\|f\|_{L^{\infty}}+1)\epsilon_{0}}\tau_0\leq c,
\end{equation}
where $C,c>0$ are the constants (depending only on $\Omega$) in the statement of
Theorem~\ref{BiLipschitzMapTheorem}.
\smallskip

\textit{Step 3 (approximation).}
Extending $f$ by $1$ outside of $\Omega$, mollifying the resulting function,
and adding an appropriate constant, it is elementary to construct
a sequence $f_{\nu}\in C^{\infty}(\overline{\Omega})$, $\nu\in \mathbb{N}$, such
that $\int_{\Omega}f_{\nu}\,dx=|\Omega|,$
\begin{equation}
\label{f.nu.sandwich}\beta/2\leq f_{\nu}\leq \|f\|_{L^{\infty}}+1\quad
\text{in $\Omega$},
\end{equation}
and
\begin{align*}
f_{\nu}(x)\rightarrow f(x)+\beta\quad
\text{for a.\,e.\ }x\in \Omega.
\end{align*}
The last formula implying convergence in measure,  there exist $\nu_0$ and a
measurable set $A\subset \Omega$ such that $|A|\leq \epsilon_0$ and
\begin{equation}
\label{f.N.plus.grand.que.f}
f_{\nu_0}\geq \frac{\beta}{2}+f\quad
\text{a.\,e.\ in $\Omega\setminus A.$}
\end{equation}
\textit{Step 4 (conclusion).}
By a classical result for the prescribed Jacobian equation (see Theorem 5 in
\cite{Dac-Moser}; recall that the boundary of $\Omega$ is of class $C^{1,1}$ and that $\int_{\Omega}f_{\nu_0}\,dx=|\Omega|$) there exists
$\varphi\in \operatorname{Diff}^{1,1}(\overline{\Omega};\overline{\Omega})$
so that
\begin{align*}
\det\nabla\varphi=f_{\nu_0}\quad \text{in $\Omega$} \quad \text{and}\quad
\varphi=\operatorname{id}\quad \text{on $\partial \Omega.$}
\end{align*}
Using (\ref{f.nu.sandwich}), we have
\begin{equation}
\label{estimate.volume}
|\varphi(A)|
=\int_{A}\det\nabla \varphi \,dx
\leq (\|f\|_{L^{\infty}}+1)|A|
\leq  (\|f\|_{L^{\infty}}+1)\epsilon_{0}.
\end{equation}
Hence, from the previous inequality and (\ref{volume.assez.petit}),
we can apply Theorem \ref{BiLipschitzMapTheorem} with
$M=\varphi(A)$ and $\tau=\tau_0$ and get a bi-Lipschitz mapping
$\psi:\overline{\Omega}\rightarrow \overline{\Omega}$ such that
$\psi=\operatorname{id}$ on $\partial \Omega$ and
\begin{align}
\det \nabla \psi&\geq 1+\tau_{0}\quad
\text{a.\,e.\ in $\varphi(A),$}\label{det.on.vaphi.K}
\\
\det \nabla \psi&\geq 1-C|\varphi(A)|^{1/2}\tau_{0}\quad
\text{a.\,e.\ in $\Omega.$}\label{det.varphi.global}
\end{align}
We claim that $\phi:=\psi\circ \varphi$ has all the desired properties. First we
obviously have $\phi=\operatorname{id}$ on $\partial \Omega.$ It remains to
show that
\begin{align*}
\det\nabla \phi=\det\nabla\psi(\varphi)\cdot f_{\nu_0}\geq f\quad \text{a.e in $\Omega.$}
\end{align*}
First, using (\ref{det.on.vaphi.K}), (\ref{f.nu.sandwich}) and (\ref{tau.assez.grand}), we obtain that a.\,e.\ in $A$
\begin{align*}
\det\nabla\psi(\varphi)\cdot f_{\nu_0}\geq (1+\tau_{0})f_{\nu_0}\geq (1+\tau_{0})\frac{\beta}{2}\geq\|f\|_{L^{\infty}}.
\end{align*}
Finally, using (\ref{det.varphi.global}), (\ref{estimate.volume}),
(\ref{f.N.plus.grand.que.f}), and (\ref{epsilon.assez.petit}), we get that a.\,e.\ in $\Omega\setminus A$
\begin{align*}
\det\nabla\psi(\varphi)\cdot f_{\nu_0}
&\geq (1-C\tau_0\sqrt{|\varphi(A)|})f_{\nu_0}
\geq (1-C\tau_0\sqrt{(\|f\|_{L^{\infty}}+1)\epsilon_{0}})f_{\nu_0}\\
&\geq (1-C\tau_0\sqrt{(\|f\|_{L^{\infty}}+1)\epsilon_{0}})
\left(\frac{\beta}{2}+f\right)
\geq f,
\end{align*}
which ends the proof.
\end{proof}

\section{Bi-Lipschitz stretching of a given set with small
measure}\label{Section.Bi-Lip.Mapping}

In this section we establish Theorem \ref{BiLipschitzMapTheorem}. The proof will
consists of two main parts. In the first part, contained in the Sections \ref{sub.sect.Simplification.of.the.domain}, \ref{sub.sect.Simplification.of.the.set.M}, and \ref{sub.sect.Simplification.of.the.boundary.values}, we reduce the problem to the case $\Omega=(0,1)^2$ (see Lemma \ref{Lemma.simpl.domain}) with a compact subset $M\subset \Omega$ (see Lemma \ref{Lemma.simpl.set}); furthermore, we drop the requirement of identity boundary conditions, replacing it by a global preservation of the boundary (see Lemma \ref{lemma.simpl.boundary.values}). This way we will only be left with proving
the simplified version of the existence of stretching maps that is stated in Proposition \ref{Prop.Bi.Lip.simpl} below.

In the second part -- see the Sections \ref{SubSection.covering} and \ref{subsection.stretching} -- , we prove Proposition \ref{Prop.Bi.Lip.simpl}.  To this end we first establish a covering result (see Lemma \ref{IntersectionFreeCovering}) based on results by Alberti, Cs\"ornyei, and Preiss \cite{Alberti}: Any compact set $K\subset (0,1)^2$ can be covered
by a finite number a strips generated by $1$-Lipschitz functions in both axis
directions with the two following properties:
\begin{itemize}
\item The total width of the strips do not exceed $2\sqrt{|K|};$ \smallskip

\item The $x$-strips have pointwise disjoint interior and similarly for the $y$-strips.
\end{itemize}
With the help of this covering result we finally prove Proposition \ref{Prop.Bi.Lip.simpl} with an explicit formula for the stretching map.

\subsection{Simplification of the domain $\Omega$.}\label{sub.sect.Simplification.of.the.domain}

\begin{lemma}\label{Lemma.simpl.domain}
Theorem \ref{BiLipschitzMapTheorem} in the case of a general planar domain $\Omega\subset\mathbb{R}^2$ is a consequence of Theorem \ref{BiLipschitzMapTheorem} for the special case of the unit square $\Omega=(0,1)^2.$
\end{lemma}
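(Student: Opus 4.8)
The plan is to reduce the case of a general $C^{1,1}$ domain $\Omega$ to the case of the unit square $Q:=(0,1)^2$ by transporting the construction via a fixed diffeomorphism. First I would fix, once and for all, a bi-Lipschitz diffeomorphism $\Psi:\overline{Q}\to\overline{\Omega}$ which is of class $C^{1,1}$ together with its inverse; such a $\Psi$ exists because $\partial\Omega$ is of class $C^{1,1}$ (one may e.g. first map $Q$ onto a disk and then the disk onto $\Omega$ using the $C^{1,1}$ boundary regularity, or invoke the standard fact that a bounded $C^{1,1}$ domain in the plane is $C^{1,1}$-diffeomorphic to a square). Since $\Psi$ is fixed and depends only on $\Omega$, the quantities $\|\nabla\Psi\|_{L^\infty}$, $\|\nabla\Psi^{-1}\|_{L^\infty}$, $\|\nabla^2\Psi\|_{L^\infty}$, $\|\nabla^2\Psi^{-1}\|_{L^\infty}$, $\|\det\nabla\Psi\|_{L^\infty}$ and $\|1/\det\nabla\Psi\|_{L^\infty}$ are all finite constants depending only on $\Omega$, which is exactly what the statement of Theorem~\ref{BiLipschitzMapTheorem} requires of its constants $C,c$.

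Next, given $\tau>0$ and a measurable set $M\subset\Omega$, I would set $\widetilde M:=\Psi^{-1}(M)\subset Q$, so that $|\widetilde M|\le \|1/\det\nabla\Psi\|_{L^\infty}|M|$ and hence the smallness hypothesis $\max\{|M|,\sqrt{|M|}\tau\}\le c(\Omega)$ guarantees $\max\{|\widetilde M|,\sqrt{|\widetilde M|}\tau\}\le c_0$, where $c_0$ is the constant from Theorem~\ref{BiLipschitzMapTheorem} in the square case (shrinking $c(\Omega)$ if necessary). Applying the square version of the theorem with parameters $\tau$ and $\widetilde M$ yields a bi-Lipschitz $\widetilde\phi=\widetilde\phi_{\tau,\widetilde M}:\overline{Q}\to\overline{Q}$ satisfying \eqref{main.boundary}--\eqref{main.det.global} on $Q$. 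I then define
\begin{align*}
\phi:=\Psi\circ\widetilde\phi\circ\Psi^{-1}:\overline{\Omega}\to\overline{\Omega}.
\end{align*}
This $\phi$ is bi-Lipschitz as a composition of bi-Lipschitz maps, and since $\widetilde\phi=\operatorname{id}$ on $\partial Q$ and $\Psi$ maps $\partial Q$ onto $\partial\Omega$, we get $\phi=\operatorname{id}$ on $\partial\Omega$, i.e. \eqref{main.boundary}. The remaining estimates are verified by the chain rule: a.e. in $\Omega$ one has, writing $y=\Psi^{-1}(x)$,
\begin{align*}
\nabla\phi(x)=\nabla\Psi(\widetilde\phi(y))\cdot\nabla\widetilde\phi(y)\cdot\nabla\Psi^{-1}(x),\qquad
\det\nabla\phi(x)=\det\nabla\Psi(\widetilde\phi(y))\cdot\det\nabla\widetilde\phi(y)\cdot\det\nabla\Psi^{-1}(x),
\end{align*}
so that $|\nabla\phi|\le C(\Omega)|\nabla\widetilde\phi|$ and $\det\nabla\phi\ge c(\Omega)\det\nabla\widetilde\phi$ pointwise (using $\det\nabla\widetilde\phi>0$), which combined with \eqref{main.nabla.phi.bounded.by.det} for $\widetilde\phi$ gives \eqref{main.nabla.phi.bounded.by.det} for $\phi$ — here I must be slightly careful, since the constant in front of $\det\nabla\phi$ degrades; but \eqref{main.nabla.phi.bounded.by.det} only asks for \emph{some} constant $C$ depending on $\Omega$, so this is fine. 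Similarly \eqref{main.det.on.M} and \eqref{main.det.global} follow after noting that $\widetilde\phi(y)\in\widetilde M$ iff $x\in\Psi(\widetilde M)$... and here one must reconcile $\Psi(\widetilde M)$ with $M$: since $\widetilde M=\Psi^{-1}(M)$ we have $\Psi(\widetilde M)=M$, but the set where $\det\nabla\widetilde\phi\ge 1+\tau$ is $\widetilde M$ in the $y$-variable, which after the change of variables $x=\Psi(y)$ translates to a lower bound for $\det\nabla\phi$ valid a.e. on $\Psi(\widetilde M)=M$. For the $W^{1,p}$ and $L^\infty$ bounds one writes $\phi-\operatorname{id}=\Psi\circ\widetilde\phi\circ\Psi^{-1}-\Psi\circ\Psi^{-1}$ and uses that $\Psi$ is Lipschitz: $\|\phi-\operatorname{id}\|_{L^\infty(\Omega)}\le\mathrm{Lip}(\Psi)\|\widetilde\phi-\operatorname{id}\|_{L^\infty(Q)}$, and for the gradient $\nabla(\phi-\operatorname{id})=\nabla\Psi(\widetilde\phi)\nabla\widetilde\phi\,\nabla\Psi^{-1}-\operatorname{Id}$; subtracting and adding $\nabla\Psi(\widetilde\phi)\nabla\Psi^{-1}$ and then $\nabla\Psi(\Psi^{-1})\nabla\Psi^{-1}=\operatorname{Id}$, one bounds this by $C(\Omega)\big(|\nabla\widetilde\phi-\operatorname{Id}| + \|\nabla^2\Psi\|_{L^\infty}|\widetilde\phi-\operatorname{id}|\big)$, and then \eqref{main.W.1.p}, \eqref{main.L.infty} follow from the corresponding estimates for $\widetilde\phi$ together with $|\widetilde M|\le C(\Omega)|M|$ and a change of variables to pass from integration over $Q$ to integration over $\Omega$.

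The main obstacle — really the only non-routine point — is the very first step: producing a bi-Lipschitz, bi-$C^{1,1}$ diffeomorphism $\Psi:\overline{Q}\to\overline{\Omega}$ with all the norms controlled by $\Omega$ alone, and in particular making sure $\Psi$ maps $\partial Q$ \emph{onto} $\partial\Omega$ (so that identity boundary conditions are preserved under conjugation) while remaining genuinely bi-Lipschitz up to the corners of the square. Everything after that is bookkeeping with the chain rule and change of variables. One clean way to handle the boundary regularity is to note that a bounded $C^{1,1}$ planar domain is $C^{1,1}$-diffeomorphic to the unit disk $D$ via a map whose derivatives up to second order are bounded in terms of $\partial\Omega$, and separately to fix once and for all a fixed bi-Lipschitz map $Q\to D$ (e.g. a radial-type map smoothed near the corners, or the composition of simple explicit maps); composing the two gives the required $\Psi$, and since neither factor depends on $M$ or $\tau$, the constants depend only on $\Omega$. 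With $\Psi$ in hand the reduction is complete, so Theorem~\ref{BiLipschitzMapTheorem} for general $\Omega$ follows from the square case.
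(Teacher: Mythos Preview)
Your reduction has two genuine gaps.

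\textbf{Topological obstruction.} Theorem~\ref{BiLipschitzMapTheorem} is stated for an arbitrary bounded domain with $C^{1,1}$ boundary, with no simple-connectedness assumption. If $\Omega$ is multiply connected (an annulus, say), there is no homeomorphism --- let alone a $C^{1,1}$-diffeomorphism --- from $\overline{Q}$ onto $\overline{\Omega}$. So the very first step, ``fix a bi-$C^{1,1}$ diffeomorphism $\Psi:\overline{Q}\to\overline{\Omega}$'', fails in general. The paper circumvents this by decomposing $\overline{\Omega}$ into finitely many closed pieces $\overline{\Omega_i}$ with pairwise disjoint interiors, each individually $C^{1,1}$-diffeomorphic to $[0,1]^2$ (Lemma~\ref{lemma:decomposition.domain}); one then runs your conjugation argument on each $\Omega_i$ with $M_i=M\cap\Omega_i$, and since every $\phi_i$ equals the identity on $\partial\Omega_i$, the maps patch together to a global bi-Lipschitz $\phi$ on $\overline{\Omega}$.

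\textbf{The determinant lower bounds.} Even when $\Psi$ exists, your argument for \eqref{main.det.on.M} and \eqref{main.det.global} is incomplete. From the chain rule,
\[
\det\nabla\phi(x)=\frac{\det\nabla\Psi(\widetilde\phi(y))}{\det\nabla\Psi(y)}\cdot\det\nabla\widetilde\phi(y),\qquad y=\Psi^{-1}(x),
\]
and the prefactor is \emph{not} $\ge 1$; the crude bound $\det\nabla\phi\ge c(\Omega)\det\nabla\widetilde\phi$ only yields $\det\nabla\phi\ge c(\Omega)(1+\tau)$ on $M$ and $\det\nabla\phi\ge c(\Omega)(1-C\sqrt{|\widetilde M|}\tau)$ globally, neither of which is the required conclusion. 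The fix (which the paper carries out) uses the $C^{1,1}$ regularity of $\Psi$ together with \eqref{main.L.infty} for $\widetilde\phi$ to get $|\det\nabla\Psi(\widetilde\phi(y))-\det\nabla\Psi(y)|\le C_\Psi|\widetilde\phi(y)-y|\le C_\Psi\sqrt{|\widetilde M|}\tau$, so the prefactor is $1+O(\sqrt{|M|}\tau)$; one then applies the square case with $2\tau$ in place of $\tau$ so that $(1-C_\Psi\sqrt{|M|}\tau)(1+2\tau)\ge 1+\tau$ on $M$ once $\sqrt{|M|}$ and $\sqrt{|M|}\tau$ are small enough. You listed $\|\nabla^2\Psi\|_{L^\infty}$ among the controlled quantities but never used it; this is precisely where it enters.
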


\begin{proof}
The proof is based on two main ingredients, namely
\begin{itemize}
\item the fact that (\ref{main.boundary})-(\ref{main.det.global}) behave well under fixed $C^{1,1}$-diffeomorphisms, and
\item the fact that any $C^{1,1}$-domain $\Omega$ may be decomposed into finitely many subdomains $\Omega_i$, each of which may be mapped onto the unit square $(0,1)^2$ by a $C^{1,1}$-diffeomorphism.
\end{itemize}
So we assume that Theorem \ref{BiLipschitzMapTheorem} has been proven for
$\Omega=(0,1)^2$ and show how  to deduce it for any bounded open set
$\Omega\subset \mathbb{R}^2$ with $C^{1,1}$ boundary. To this end we recall that
any bi-Lipschitz mapping in the plane $\psi$ satisfies
\begin{equation}
\label{control.area}
\frac{1}{L^2}|M|\leq |\psi(M)|\leq L^2|M|
\end{equation}
whenever
$|x-y|/L\leq |\psi(x)-\psi(y)|\leq L|x-y|$ holds for every $x,y.$
\smallskip

\textit{Step 1.} We assume first that there exists a
bijection $\psi:[0,1]^2\rightarrow \overline{\Omega}$ such that $\psi\in
C^{1,1}([0,1]^2;\overline{\Omega})$ and $\psi^{-1}\in
C^{1,1}(\overline{\Omega};[0,1]^2)$ hold. We call such a set $\Omega$
\textit{$C^{1,1}$-equivalent to the unit square}.

Let $\tau>0$ and $M\subset \Omega$
with $|M|$ and $\sqrt{|M|}\tau$ small enough, which trivially  implies (by
\eqref{control.area}) that $|\psi^{-1}(M)|$ and $\sqrt{|\psi^{-1}(M)|}\tau$ are
also small. Hence, by hypothesis applied to  $2\tau$ in place of
$\tau$, there exists a bi-Lipschitz map $\widetilde{\phi}:[0,1]^2\rightarrow
[0,1]^2$ stretching the set $\psi^{-1}(M)\subset (0,1)^2,$ such that
\begin{align}
\widetilde{\phi}&=\operatorname{id}\quad
\text{on $\partial[0,1]^2,$}\label{simpl.domain.boundary}
\\
\|\widetilde{\phi}-\operatorname{id}\|_{W^{1,p}([0,1]^2)}&
\leq 2C\tau|\psi^{-1}(M)|^{1/(2p)}\quad
\text{for all $1\leq p\leq \infty,$}\label{simpl.domain.W.1.p}
\\
\|\widetilde{\phi}-\operatorname{id}\|_{L^\infty([0,1]^2)}&
\leq 2C\tau|\psi^{-1}(M)|^{1/2},
\label{simpl.domain.L.infty}
\\
|\nabla\widetilde{\phi}|&\leq C\det\nabla \widetilde{\phi}\quad\text{a.\,e.\ in $(0,1)^2,$}\label{simpl.domain.nabla.borne.par.det}\\
\det \nabla \widetilde{\phi}&\geq 1+2\tau\quad \text{a.\,e.\ in $\psi^{-1}(M),$}
\label{simpl.domain.det.on.M}
\\
\det \nabla \widetilde{\phi}&\geq 1-2C\tau|\psi^{-1}(M)|^{1/2}\quad
\text{a.\,e.\ in $(0,1)^2.$}
\label{simpl.domain.det.global}
\end{align}
We claim that
\begin{align*}
\phi:=\psi\circ \widetilde{\phi}\circ \psi^{-1}
\end{align*}
satisfies (\ref{main.boundary})-(\ref{main.det.global}).
Indeed first (\ref{main.boundary}) follows trivially from
(\ref{simpl.domain.boundary}).
In what follows $C_{\psi}$ will denote a generic constant depending only on
$\|\psi\|_{C^{1,1}}$ and $\|\psi^{-1}\|_{C^{1,1}}$ (and hence only on $\Omega$)
which may change from appearance to appearance.
Using (\ref{control.area}) and (\ref{simpl.domain.L.infty}) we get
\begin{align*}
\|\phi-\operatorname{id}\|_{L^{\infty}(\Omega)}\leq
C_{\psi}\|\widetilde{\phi}\circ \psi^{-1}-\psi^{-1}\|_{L^{\infty}(\Omega)}\leq
C_{\psi}|M|^{1/2}\tau,
\end{align*}
which proves (\ref{main.L.infty}).
Next note that
\begin{align*}
\nabla \phi-\operatorname{Id}
=\nabla (\psi \circ \widetilde{\phi} \circ \psi^{-1})-\operatorname{Id}
=&\nabla \psi (\widetilde{\phi} \circ \psi^{-1})
\cdot (\nabla \widetilde{\phi} (\psi^{-1})-\operatorname{Id})
\cdot \nabla \psi^{-1}
\\&
+(\nabla \psi (\widetilde{\phi} \circ \psi^{-1})-\nabla \psi (\psi^{-1}))
\cdot \nabla \psi^{-1}
\end{align*}
which yields, using (\ref{control.area}), (\ref{simpl.domain.W.1.p}), and
(\ref{simpl.domain.L.infty}),
\begin{align*}
\|\nabla \phi-\operatorname{Id}\|_{L^p(\Omega)}&
\leq C_{\psi}\|\nabla \widetilde{\phi}-\operatorname{Id}
\|_{L^p([0,1]^2)}
+C_{\psi}\| \widetilde{\phi}-\operatorname{id}
\|_{L^\infty([0,1]^2)}\\
&\leq C_{\psi}|M|^{1/(2p)}\tau,
\end{align*}
proving (\ref{main.W.1.p}).
Next, using (\ref{simpl.domain.nabla.borne.par.det}), we deduce, a.\,e.\ in $\Omega,$
\begin{align*}
|\nabla \phi|&\leq C_{\psi}|\nabla\widetilde{\phi}(\psi^{-1})|\leq C_{\psi}\det\nabla\widetilde{\phi}(\psi^{-1})\leq C_{\psi}\det\nabla\phi,
\end{align*}
which proves (\ref{main.nabla.phi.bounded.by.det}).
Then
\begin{align*}
&\det \nabla (\psi \circ \widetilde{\phi} \circ \psi^{-1})
=\det \nabla \psi (\widetilde{\phi} \circ \psi^{-1})
\det \nabla \widetilde{\phi} (\psi^{-1})
\det\nabla \psi^{-1}
\\
=&\left(\det \nabla \psi (\widetilde{\phi} \circ \psi^{-1})
-\det \nabla \psi (\psi^{-1})
\right)
\det \nabla \widetilde{\phi} (\psi^{-1})
\det\nabla \psi^{-1}
+\det \nabla \widetilde{\phi} (\psi^{-1})
\\
\geq& -C_\psi |\widetilde{\phi}\circ \psi^{-1}-\psi^{-1}|
+\det \nabla \widetilde{\phi} (\psi^{-1})
\\
\geq& -C_\psi |M|^{1/2} \tau
+\det \nabla \widetilde{\phi} (\psi^{-1}),
\end{align*}
where we have used (\ref{simpl.domain.L.infty}) for
the last inequality.
Hence using (\ref{simpl.domain.det.on.M}) and (\ref{simpl.domain.det.global})
we get from the last inequality
\begin{align*}
\det\nabla \phi\geq
\left\{\begin{array}{cl}1+\tau(2-C_{\psi}|M|^{1/2})&\text{a.\,e.\ in $M$}
\\
1-C_{\psi}|M|^{1/2}\tau&\text{a.\,e.\ in $\Omega\setminus M.$}\end{array} \right.
\end{align*}
Taking $|M|$ small enough so that $C_{\psi}|M|^{1/2}\leq 1$
gives (\ref{main.det.on.M}) and (\ref{main.det.global}). \smallskip

\textit{Step 2.} We now prove the lemma in the general case. \smallskip

\textit{Step 2.1.} Let $\tau>0$ and $M\subset \Omega$ with
$|M|$ and $\sqrt{|M|}\tau$ small enough.
Using Lemma \ref{lemma:decomposition.domain}
there
exist $N$ open sets $\Omega_1,\ldots,\Omega_N$ every one of which is
$C^{1,1}$-equivalent to the unit square and such that
\begin{align*}
\overline{\Omega}=\bigcup_{i=1}^{N}\overline{\Omega}_i\quad \text{and}\quad \Omega_i\cap \Omega_j=\emptyset\quad \text{for every $1\leq i<j\leq N$}.
\end{align*}
Using Step 1, for any $1\leq i\leq N$ there
exists a bi-Lipschitz mapping $\phi_{i}$ from $\overline{\Omega}_{i}$ to
$\overline{\Omega}_{i}$  satisfying
(\ref{main.boundary})-(\ref{main.det.global}) for
\begin{align*}
M_i:=M\cap \Omega_i.
\end{align*}
We extend the $\phi_{i}$ by the identity outside $\Omega_i$ which obviously
implies that (\ref{main.W.1.p})-(\ref{main.det.global}) are satisfied on
$\Omega$ (and not only on $\Omega_{i})$. Summarizing we have for every $1\leq
i\leq N$
\begin{align}
\phi_{i}&=\operatorname{id}\quad \text{on }\overline{\Omega}\setminus \Omega_{i}
\quad\text{and}\quad \phi_i(\Omega_i)=\Omega_i,
\label{lemma.id.ouside.Omega.i}
\\
\|\phi_{i}-\operatorname{id}\|_{W^{1,p}(\Omega)}&\leq C|M_{i}|^{1/(2p)}
\tau
\quad \text{for every $1\leq p\leq \infty,$}\label{lemma.W.1.p}
\\
\|\phi_{i}-\operatorname{id}\|_{L^{\infty}(\Omega)}&\leq C|M_{i}|^{1/2}\tau,\label{lemma.L.infty}
\\
|\nabla\phi_{i}|&\leq C\det\nabla \phi_i\quad \text{a.\,e.\ in $\Omega$},\label{lemma.nabla.bounded.by.det}\\
\det \nabla \phi_{i}&\geq 1+\tau\quad \text{a.\,e.\ in $M_{i},$}\label{lemma.det.on.M.i}
\\
\det \nabla \phi_{i}&\geq 1-C|M_{i}|^{1/2}\tau\quad \text{a.\,e.\ in $\Omega.$}
\label{lemma.det.global}
\end{align}

\textit{Step 2.2 (conclusion).}  We claim that
\begin{align*}
\phi:=\phi_{N}\circ\ldots\circ\phi_1
\end{align*}
satisfies all the properties stated in Theorem \ref{BiLipschitzMapTheorem}.
First we obviously have that $\phi$ is a bi-Lipschitz mapping from
$\overline{\Omega}$ to $\overline{\Omega}$
with $\phi=\operatorname{id}$ on $\partial\Omega.$
For every $1\leq i\leq N$, we have
\begin{align*}
\phi=\phi_i\quad \text{in }\Omega_i
\end{align*}
and hence for every $1\leq i\leq N$
\begin{align*}
\nabla \phi=\nabla\phi_i\quad \text{and}\quad \det\nabla \phi=\det\nabla\phi_i\quad \text{a.\,e.\ in $\Omega_i$.}
\end{align*}
Therefore, since that $|M_i|\leq |M|$ and since $|\Omega\setminus \cup_{i=1}^N\Omega_i|=0$ we get
directly get (\ref{main.W.1.p})-(\ref{main.det.global}) from (\ref{lemma.W.1.p})-(\ref{lemma.det.global}).
This ends the proof.
\end{proof}

\subsection{Simplification of the set $M$ on which we stretch}
\label{sub.sect.Simplification.of.the.set.M}

\begin{lemma}\label{Lemma.simpl.set}
In order to prove Theorem \ref{BiLipschitzMapTheorem} for some domain $\Omega\subset\mathbb{R}^2$, it is enough to prove it for compact subsets $M=K\subset \Omega.$
\end{lemma}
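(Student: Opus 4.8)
The plan is to approximate a general measurable set $M\subset\Omega$ from the inside by compact sets and then to pass to the limit in the construction, exploiting the inner regularity of Lebesgue measure together with the weak continuity of the Jacobian determinant. Concretely, given a measurable $M\subset\Omega$ with $\max\{|M|,\sqrt{|M|}\tau\}\leq c$, I would choose by inner regularity an increasing sequence of compact sets $K_n\subset M$ with $|M\setminus K_n|\to 0$; since $|K_n|\leq|M|$, the smallness hypothesis $\max\{|K_n|,\sqrt{|K_n|}\tau\}\leq c$ is automatic, so the compact-set version of Theorem \ref{BiLipschitzMapTheorem} yields bi-Lipschitz maps $\phi_n=\phi_{\tau,K_n}:\overline\Omega\to\overline\Omega$ satisfying (\ref{main.boundary})--(\ref{main.det.global}) with $M$ replaced by $K_n$ (and with the \emph{same} constants $C,c$, since these depend only on $\Omega$).

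Next I would extract a limit. From (\ref{main.W.1.p}) with $p=\infty$ we get a uniform Lipschitz bound $\|\phi_n-\mathrm{id}\|_{W^{1,\infty}}\leq C|K_n|^{1/(2)}\tau\leq C|M|^{1/2}\tau$ wait---more precisely $\|\phi_n-\mathrm{id}\|_{W^{1,\infty}}\leq C\tau$; together with (\ref{main.L.infty}) this gives equi-Lipschitz, equibounded maps, so by Arzelà--Ascoli a subsequence converges uniformly to some $\phi:\overline\Omega\to\overline\Omega$, and $\nabla\phi_n\rightharpoonup\nabla\phi$ weakly-$*$ in $L^\infty$. The uniform estimates (\ref{main.W.1.p})--(\ref{main.L.infty}) pass to $\phi$ by lower semicontinuity of the norms under weak-$*$ convergence, and (\ref{main.boundary}) survives since uniform convergence preserves the boundary values. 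To see that $\phi$ is bi-Lipschitz (not merely Lipschitz), I would argue that (\ref{main.nabla.phi.bounded.by.det}) and (\ref{main.det.global}) for $\phi$ force $\det\nabla\phi\geq 1-C|M|^{1/2}\tau>0$ a.e.\ and $|\nabla\phi|\leq C\det\nabla\phi$, and then invoke the fact that a Lipschitz map of $\overline\Omega$ onto itself (or, if one prefers, a degree argument / the explicit inverse bound $|\nabla\phi^{-1}|=|\mathrm{adj}\,\nabla\phi|/\det\nabla\phi\leq C$) with a uniformly positive Jacobian is a bi-Lipschitz homeomorphism.

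The delicate points are the two determinant inequalities (\ref{main.det.on.M}) and (\ref{main.det.global}), and here the weak continuity of the determinant in the plane is the essential tool. For (\ref{main.det.global}): testing the distributional identity $\int_\Omega\det\nabla\phi_n\,\eta\,dx=-\tfrac12\int_\Omega\langle\mathrm{adj}\,\nabla\phi_n\cdot\phi_n,\nabla\eta\rangle\,dx$ against $\eta\in C^\infty_{cpt}(\Omega;[0,\infty))$ and using $\phi_n\to\phi$ uniformly, $\nabla\phi_n\rightharpoonup\nabla\phi$ weakly-$*$, one passes to the limit on the right-hand side to get $\int_\Omega\det\nabla\phi\,\eta\,dx=-\tfrac12\int_\Omega\langle\mathrm{adj}\,\nabla\phi\cdot\phi,\nabla\eta\rangle\,dx=\lim_n\int_\Omega\det\nabla\phi_n\,\eta\,dx\geq(1-C|M|^{1/2}\tau)\int_\Omega\eta\,dx$, hence (\ref{main.det.global}) a.e.\ (all these maps are Lipschitz, so the pointwise and distributional determinants coincide). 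For (\ref{main.det.on.M}) I would fix any $\varepsilon>0$, note that $\{\det\nabla\phi_n\geq 1+\tau\}\supset K_n$, and use that $K_n\uparrow$ with $|M\setminus K_n|\to 0$: for a nonnegative $\eta$ supported in (a compact subset of) $M$, testing as above against the sets $K_m$ with $m\geq n$ gives $\int\det\nabla\phi_m\,\eta\geq(1+\tau)\int_{K_n}\eta+(1-C|M|^{1/2}\tau)\int_{M\setminus K_n}\eta$; letting $m\to\infty$ and then $n\to\infty$ yields $\int_M\det\nabla\phi\,\eta\geq(1+\tau)\int_M\eta$ for all such $\eta\geq0$, whence $\det\nabla\phi\geq 1+\tau$ a.e.\ on $M$. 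I expect this last step---correctly organizing the double limit so that the ``bad set'' $M\setminus K_n$ contributes only the harmless lower bound $1-C|M|^{1/2}\tau$ rather than spoiling the sharp bound $1+\tau$ on $M$---to be the main technical obstacle; everything else is a routine compactness-and-lower-semicontinuity argument.
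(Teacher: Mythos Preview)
Your proposal is correct and follows essentially the same route as the paper: inner regularity of Lebesgue measure to get an increasing sequence $K_n\subset M$, uniform $W^{1,\infty}$ bounds to extract a weak-$*$ limit, lower semicontinuity for the norm estimates, and weak continuity of the determinant for (\ref{main.det.global}) and (\ref{main.det.on.M}). One small caveat: for (\ref{main.det.on.M}) you speak of test functions $\eta$ ``supported in (a compact subset of) $M$'', but $M$ is merely measurable and may have empty interior, so there need be no such smooth $\eta$; the paper instead tests against $\chi_A$ for measurable $A\subset K_n$ (which is legitimate since the $\phi_m$ are uniformly Lipschitz), and you should do the same. Also, you do not say how (\ref{main.nabla.phi.bounded.by.det}) passes to the limit; the paper combines lower semicontinuity of $\int_A|\nabla\phi_m|$ with the weak convergence of $\int_A\det\nabla\phi_m$ on open sets $A$.
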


\begin{proof} The proof utilizes the inner regularity of the Lebesgue measure,
the weak lower semicontinuity of the norms $\|\cdot\|_{W^{1,p}}$, and the weak continuity of the determinant.\smallskip

Let $\tau>0$ and $M\subset \Omega$ be a measurable set with $|M|$ and $\sqrt{|M|}\tau$ small enough.
We assume that Theorem \ref{BiLipschitzMapTheorem} has been proven in the case when the set $M$ that is to be stretched is a compact subset $K\subset\Omega$. By a limiting process, we show that Theorem \ref{BiLipschitzMapTheorem} then holds for any measurable set $M\subset\Omega$.
Recall that the constant $C$ appearing in (\ref{main.W.1.p}), (\ref{main.L.infty}), \eqref{main.nabla.phi.bounded.by.det}, and (\ref{main.det.global}) is independent of $K$ (and of $\tau$).\smallskip

\textit{Step 1.} By inner regularity of the Lebesgue measure, we may
choose an increasing sequence of compact sets $K_{\nu}\subset M$ with
$\lim_{\nu\rightarrow \infty}|K_{\nu}|=|M|$.
For any $K_{\nu}$ we have by assumption a bi-Lipschitz map
$\phi_{\nu}=(\phi_{\nu})_{\tau,K_\nu}:\overline{\Omega}\rightarrow
\overline{\Omega}$ satisfying
\begin{align}
\phi_{\nu}&=\operatorname{id}\quad  \text{on $\partial\Omega,$}
\label{lemma.set.boundary}
\\
\|\phi_{\nu}-\operatorname{id}\|_{W^{1,p}(\Omega)}&\leq C|K_{\nu}|^{1/(2p)}
\tau\quad
\text{for every $1\leq p\leq \infty,$}
\label{lemma.set.W.1.p}
\\
\|\phi-\operatorname{id}\|_{L^{\infty}(\Omega)}&\leq C|K_{\nu}|^{1/2}\tau,\label{lemma.set.L.infty}\\
|\nabla \phi_{\nu}|&\leq C\det\nabla\phi_{\nu}\quad
\text{a.\,e.\ in $\Omega$,}
\label{lemma.set.nabla.bounded.by.det.W.1.p}\\
\det \nabla \phi_\nu&\geq 1+\tau\quad
\text{a.\,e.\ in $K_{\nu},$}\label{lemma.set.det.on.M}
\\
\det \nabla \phi_{\nu}&\geq 1-C|K_{\nu}|^{1/2}\tau\quad
\text{a.\,e.\ in $\Omega.$}\label{lemma.set.det.global}
\end{align}
Taking the upper bound $c$ on $\sqrt{|M|}\tau$ in the assumptions of Theorem \ref{BiLipschitzMapTheorem} smaller if necessary, we deduce from
(\ref{lemma.set.det.global}) that
\begin{align*}
\det\nabla \phi_{\nu}\geq 1/2\quad \text{a.\,e.\ in $\Omega.$}
\end{align*}

\textit{Step 2.}
By (\ref{lemma.set.W.1.p}) with $p=\infty$ and
the last inequality, the maps $\phi_{\nu},\phi_{\nu}^{-1}$ are
uniformly bounded in $W^{1,\infty}(\Omega),$ namely
\begin{equation}
\label{conv.W.1.infty.phi.n}
\|\phi_{\nu}-\operatorname{id}\|_{W^{1,\infty}(\Omega)}
\leq C\tau
\end{equation}
and
\begin{align*}
\sup_{\nu}\|\phi_{\nu}^{-1}\|_{W^{1,\infty}}<\infty.
\end{align*}
Hence, up to a subsequence we know that
\begin{align*}
\phi_{\nu}\stackrel{\ast}{\rightharpoonup}\phi\quad
\text{in $W^{1,\infty}(\Omega)$ as $\nu\rightarrow \infty$}
\end{align*}
holds for some bi-Lipschitz map $\phi$ from $\overline{\Omega}$ to
$\overline{\Omega}.$
By compactness of the embedding of $W^{1,\infty}$ in $C^0$,
we see that $\phi_{\nu}$ converges to $\phi$ also in $C^0(\overline{\Omega}).$
Hence, using (\ref{lemma.set.boundary}) we get \eqref{main.boundary}, namely
\begin{align*}
\phi=\operatorname{id}\quad \text{on $\partial \Omega.$}
\end{align*}
Note also that by the weak lower semicontinuity we have
\begin{equation}
\label{weak.lower.semi.cont}
\|\phi-\operatorname{id}\|_{W^{1,p}(\Omega)}
\leq \liminf_{\nu\rightarrow \infty}
\|\phi_{\nu}-\operatorname{id}\|_{W^{1,p}(\Omega)}\quad
\text{for every $1\leq p\leq \infty.$}
\end{equation}
Hence, from (\ref{lemma.set.W.1.p}), (\ref{weak.lower.semi.cont}), and the fact
that $|K_{\nu}|\leq |M|$ holds we get \eqref{main.W.1.p}, namely
\begin{align*}
\|\phi-\operatorname{id}\|_{W^{1,p}(\Omega)}\leq C|M|^{1/(2p)}\tau \quad \text{for every $1\leq p\leq \infty.$}
\end{align*}
Furthermore, from
\begin{align*}
\|\phi_\nu-\operatorname{id}\|_{L^\infty(\Omega)}
\leq C\sqrt{|K_\nu|}\tau\leq C\sqrt{|M|}\tau
\end{align*}
we deduce \eqref{main.L.infty}, namely
\begin{align*}
\|\phi-\operatorname{id}\|_{L^\infty(\Omega)}
\leq C\sqrt{|M|}\tau.
\end{align*}

\textit{Step 3.} It remains to prove the three assertions involving the
determinant, namely (\ref{main.nabla.phi.bounded.by.det})-(\ref{main.det.global}). By weak continuity of the determinant, we have for any $f\in
L^{\infty}(\Omega)$
\begin{equation}\label{weak.cont.det}
\lim_{\nu\rightarrow \infty} \int_{\Omega} f\det \nabla \phi_{\nu}~dx
=\int_{\Omega} f\det \nabla \phi~dx.
\end{equation}
By sequential lower semicontinuity of the norm $||\nabla \phi_\nu ||_{L^1(A)}$ for any open set $A\subset\Omega$, we deduce from (\ref{lemma.set.nabla.bounded.by.det.W.1.p}) and (\ref{weak.cont.det}) that
\begin{align*}
\int_{\Omega}|\nabla \phi|\chi_{A} ~dx
&\leq \liminf_{\nu\rightarrow \infty} ||\nabla \phi_\nu||_{L^1(A)}
=\liminf_{\nu\rightarrow \infty}\int_{\Omega}|\nabla \phi_{\nu}|\chi_{A} ~dx
\\
 &\leq C\lim_{\nu\rightarrow \infty}\int_{\Omega}\det\nabla \phi_{\nu}\chi_{A} ~dx
 =C\int_{\Omega}\det\nabla \phi\chi_{A} ~dx,
\end{align*}
which proves (\ref{main.nabla.phi.bounded.by.det}) since the open set $A$ is arbitrary.
Recalling that (see (\ref{lemma.set.det.on.M})) $\det \nabla \phi_{\nu}\geq 1+\tau$
a.\,e.\ on $K_{\nu}$ and that $K_\nu\subset K_{\nu+1}$, we obviously have for any
measurable subset $A\subset K_{\nu}$
\begin{align*}
\int_{\Omega} \chi_A \det\nabla \phi_{\nu+k}~dx
\geq \int_{\Omega} (1+\tau)\chi_A~dx
\quad \forall k\geq 0,
\end{align*}
which implies (by (\ref{weak.cont.det}))
\begin{align*}
\int_{\Omega} \chi_A \det\nabla \phi~dx
\geq \int_{\Omega} (1+\tau)\chi_A~dx.
\end{align*}
By arbitrariness of $A\subset K_\nu$, we obtain $\det \nabla \phi\geq 1+\tau$
a.\,e.\ on $K_\nu$ for all $\nu$. Since $|M\setminus (\cup_{\nu\geq 1}K_{\nu})|=0,$
we deduce
\begin{align*}
\det\nabla \phi\geq 1+\tau \quad \text{a.\,e.\ in $M.$}
\end{align*}
Similarly, using (\ref{lemma.set.det.global}) and
(\ref{weak.cont.det}) we get for any measurable subset $A\subset \Omega$
\begin{align*}
&\int_{\Omega} \chi_A \det \nabla \phi~dx
=\lim_{\nu\rightarrow \infty}\int_{\Omega} \chi_A \det \nabla \phi_{\nu}~dx
\\
\geq&
\lim_{\nu\rightarrow \infty}
\int_{\Omega} (1-C\sqrt{|K_{\nu}|}\tau)\chi_A~dx
= \int_{\Omega} (1-C\sqrt{|M|}\tau)\chi_A~dx,
\end{align*}
which, again by arbitrariness of $A,$ implies
\begin{align*}
\det \nabla \phi
\geq 1-C\sqrt{|M|}\tau\quad \text{a.\,e.\ in $\Omega.$}
\end{align*}
This ends the proof.
\end{proof}

\subsection{Simplification of the boundary values}\label{sub.sect.Simplification.of.the.boundary.values}

We finally notice that it is enough to prove Theorem \ref{BiLipschitzMapTheorem}
when $\Omega=(0,1)^2$, when $M\subset \Omega$ is compact and without assuming the
bi-Lipschitz mapping to preserve the boundary pointwise.

\begin{lemma}\label{lemma.simpl.boundary.values}
Theorem \ref{BiLipschitzMapTheorem} is implied by Proposition \ref{Prop.Bi.Lip.simpl} below.
\end{lemma}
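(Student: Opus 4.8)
The plan is first to observe, using Lemma~\ref{Lemma.simpl.domain} and Lemma~\ref{Lemma.simpl.set}, that all of Theorem~\ref{BiLipschitzMapTheorem} follows once one knows it for $\Omega=(0,1)^2$ and $M=K$ a \emph{compact} subset of $(0,1)^2$; so it suffices to produce such a map from Proposition~\ref{Prop.Bi.Lip.simpl}. Given such a $K$ and a $\tau>0$ with $\sqrt{|K|}\tau$ small, I would apply Proposition~\ref{Prop.Bi.Lip.simpl} \emph{with $2\tau$ in place of $\tau$} --- the extra factor of two being spent, exactly as in Step~1 of the proof of Lemma~\ref{Lemma.simpl.domain}, to absorb the loss caused by the boundary correction --- to obtain a bi-Lipschitz self-map $\widehat\phi$ of $\overline{(0,1)^2}$ that maps $\partial(0,1)^2$ onto itself, fixes its four corners, stretches $K$ in the sense $\det\nabla\widehat\phi\geq 1+2\tau$ a.e.\ on $K$, and satisfies the analogues of \eqref{main.W.1.p}--\eqref{main.det.global} with $2\tau$; in particular $\|\widehat\phi-\operatorname{id}\|_{L^\infty}\leq C\sqrt{|K|}\,\tau$, so $\widehat\phi$ restricted to $\partial(0,1)^2$ is an orientation-preserving homeomorphism of the boundary which is $C\sqrt{|K|}\,\tau$-close to the identity and whose one-sided tangential derivative lies in $[1/C,\,C\tau+C]$.

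Next I would fix a boundary collar $\Gamma_h:=\{z\in[0,1]^2:\dist(z,\partial(0,1)^2)<h\}$ of width $h>0$ depending only on the square, and set $\phi:=\iota\circ\widehat\phi$, where $\iota:\overline{(0,1)^2}\to\overline{(0,1)^2}$ is an \emph{explicit} bi-Lipschitz map equal to the identity on $[0,1]^2\setminus\Gamma_h$, equal to $(\widehat\phi|_{\partial(0,1)^2})^{-1}$ on $\partial(0,1)^2$, mapping $\Gamma_h$ onto itself, and interpolating between these two prescriptions inside $\Gamma_h$ --- concretely, near each edge one re-parametrizes only the tangential variable, letting the interpolation of $(\widehat\phi|_{\mathrm{edge}})^{-1}$ toward the identity be carried by the distance-to-boundary variable; see Figure~\ref{BoundaryCorrection}, with the four corners handled separately. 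Then $\phi=\operatorname{id}$ on $\partial(0,1)^2$, which is \eqref{main.boundary}. Writing $\nabla\phi=\nabla\iota(\widehat\phi)\,\nabla\widehat\phi$ and $\det\nabla\phi=\det\nabla\iota(\widehat\phi)\,\det\nabla\widehat\phi$, the remaining conclusions should follow from the corresponding bounds for $\widehat\phi$ together with the (still to be verified) estimates $|\nabla\iota|\leq C$, $\det\nabla\iota\geq 1-C\sqrt{|K|}\,\tau$, $|\nabla\iota(\widehat\phi)\nabla\widehat\phi|\leq C\det(\nabla\iota(\widehat\phi)\nabla\widehat\phi)$, and $\|\iota-\operatorname{id}\|_{W^{1,p}}+\|\iota-\operatorname{id}\|_{L^\infty}\leq C\sqrt{|K|}\,\tau$ on the bounded-measure set $\Gamma_h$: indeed, \eqref{main.L.infty} and \eqref{main.W.1.p} then follow from the triangle inequality and $\sqrt{|K|}\leq|K|^{1/(2p)}$ for $p\geq 1$, $|K|\leq1$; \eqref{main.nabla.phi.bounded.by.det} is the penultimate estimate; \eqref{main.det.global} follows from $\det\nabla\phi\geq(1-C\sqrt{|K|}\,\tau)(1-C\sqrt{|K|}\,\tau)$; and \eqref{main.det.on.M} follows since on $K$ one has $\det\nabla\phi\geq(1-C\sqrt{|K|}\,\tau)(1+2\tau)\geq 1+\tau$ once $\sqrt{|K|}\,\tau$ is small enough (this is where the factor of two pays off).

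The hard part, and the only genuinely delicate point, is the construction of the correction map $\iota$ near $\partial(0,1)^2$ and the verification that the composition $\phi=\iota\circ\widehat\phi$ keeps the two \emph{$\tau$-uniform} estimates \eqref{main.nabla.phi.bounded.by.det} and \eqref{main.det.global} while simultaneously retaining the stretching \eqref{main.det.on.M} on the part of $K$ that lies inside the collar. The difficulty is that $\widehat\phi|_{\partial(0,1)^2}$ may genuinely stretch the boundary tangentially by a factor of order $\tau$ on short subarcs (of length $\sim\sqrt{|K|}$), so a naive shear-type interpolation of its inverse into $\Gamma_h$ would produce a map whose Jacobian degenerates relative to its gradient, violating \eqref{main.nabla.phi.bounded.by.det} after composition; $\iota$ must instead be built so that it stretches in the direction normal to $\partial(0,1)^2$ in precise compensation with any tangential compression it performs, in such a way that the large tangential stretch of $\widehat\phi$ and the tangential compression of $\iota$ cancel to leading order while the product of Jacobians stays $\geq 1-C\sqrt{|K|}\,\tau$. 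This is why the construction is carried out by an explicit formula adapted to the geometry of the square (and to a separate treatment near the corners, which is harmless because $\widehat\phi$ fixes the corners and is close to the identity there), and why it is then transported to general $C^{1,1}$ domains through the chart decomposition already used in Lemma~\ref{Lemma.simpl.domain}.
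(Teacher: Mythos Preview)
Your reduction to the unit square with compact $M$ via Lemmas~\ref{Lemma.simpl.domain} and~\ref{Lemma.simpl.set}, and the idea of applying Proposition~\ref{Prop.Bi.Lip.simpl} with $2\tau$ so as to absorb a later $O(\sqrt{|K|}\,\tau)$ loss, are both correct and match the paper. However, your boundary-correction scheme is genuinely different from the paper's, and as written it contains a gap.

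The paper does \emph{not} apply Proposition~\ref{Prop.Bi.Lip.simpl} on the full square and then post-compose with a correction $\iota$. Instead it applies the proposition on the \emph{subsquare} $S$ obtained by removing a boundary layer of width $|M|^{1/2}$ (not a fixed width $h$), obtaining $\tilde\phi:\overline S\to\overline S$ for the set $M\cap S$. It then contracts $\tilde\phi$ by the scalar factor $(1-|M|^{1/2}(1+2\tau))/(1-|M|^{1/2})=1-O(\sqrt{|M|}\,\tau)$ and \emph{extends} into the four boundary quadrilaterals by an explicit formula (see~\eqref{def.on.Q.l}) which stretches the layer \emph{normally} by the uniform factor $1+2\tau$ while interpolating tangentially between the identity on the outer boundary and the trace of the contracted $\tilde\phi$ on $\partial S$. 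The computation~\eqref{LemmaPhiDet} shows $\det\nabla\phi\geq 1+\tau$ on the \emph{entire} boundary layer, so the portion $M\setminus S$ is automatically stretched; the only input needed from $\tilde\phi$ is the \emph{lower} bound $\partial_y\tilde\phi_2(\pm(1-|M|^{1/2}),\cdot)\geq 1-C|M|^{1/2}\tau$, which is exactly why~\eqref{prop.derivees.phi.1}--\eqref{prop.derivees.phi.2} are included in Proposition~\ref{Prop.Bi.Lip.simpl}. In particular the paper never inverts the boundary trace and is indifferent to the fact that this trace may stretch tangentially by $1+2\tau$ on short arcs.

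Your scheme, by contrast, must undo precisely that stretch: on the strip arcs the tangential derivative of $(\widehat\phi|_{\partial})^{-1}$ is $\sim 1/(1+2\tau)$, so the pointwise bound $\det\nabla\iota\geq 1-C\sqrt{|K|}\,\tau$ you need for both~\eqref{main.det.on.M} and~\eqref{main.det.global} cannot come from a purely tangential reparametrization---indeed your own first concrete proposal (``re-parametrizes only the tangential variable'') yields $\det\nabla\iota\sim 1/(1+2\tau)$ there. You then assert that normal compensation repairs this, but you do not construct such an $\iota$, and with a \emph{fixed} collar width $h$ the construction is not routine: a normal stretch $\sim 1+2\tau$ near $\partial(0,1)^2$ forces a normal compression somewhere in the same fiber (since $\iota=\operatorname{id}$ at distance $h$ and $\int_0^h\partial_y\iota_2\,dy=h$), and one must verify that in that compression zone $\det\nabla\iota$ still stays above $1-C\sqrt{|K|}\,\tau$ while the cross-derivatives remain under control. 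This is the very difficulty the paper sidesteps by taking the layer width to be $|M|^{1/2}$ and by extending outward (with a uniform normal stretch balanced against a global contraction of $S$) rather than correcting inward via an inverse.
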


\begin{proposition}\label{Prop.Bi.Lip.simpl}
There exist universal constants $C,c>0$  with the following property: for any
$\tau>0$ and any compact set $K\subset (0,1)^2$ with
$\max\{|K|,\sqrt{|K|}\tau\}\leq c$ there exists a bi-Lipschitz mapping
$\phi=\phi_{\tau,K}:[0,1]^2\rightarrow [0,1]^2$ satisfying
\begin{align}
\|\phi-\operatorname{id}\|_{W^{1,p}([0,1]^2)}&\leq C|K|^{1/(2p)}\tau
\quad \text{for all $1\leq p\leq \infty,$}
\label{prop.W.1.p}
\\
\|\phi-\operatorname{id}\|_{L^{\infty}([0,1]^2)}&\leq C\sqrt{|K|}\tau
\label{prop.L.infty}
\\
|\nabla\phi|&\leq C\det\nabla\phi\quad \text{a.\,e.\ in $[0,1]^2,$}
\label{prop.nabla.phi.bounded.by.det}
\\
\det \nabla \phi&\geq 1+\tau\quad \text{a.\,e.\ on $K,$}
\label{prop.det.on.M}
\\
\det \nabla \phi&\geq 1-C\sqrt{|K|}\tau\quad \text{a.\,e.\ on $[0,1]^2.$}
\label{prop.det.global}
\end{align}
Moreover the following properties concerning  the boundary values hold true:
\begin{align}
\phi_1(0,s)&=\phi_2(s,0)=0\quad\text{for every $s\in [0,1],$}\label{prop.bord.pres.glob.1}
\\
\phi_1(1,s)&=\phi_2(s,1)=1\quad\text{for every $s\in [0,1],$}\label{prop.bord.pres.glob.2}
\\
\partial_x \phi_1(x,y) &\geq 1-C|K|^{1/2}\tau\quad \text{for every }(x,y)\in [0,1]\times \{0,1\},
\label{prop.derivees.phi.1}
\\
\partial_y \phi_2(x,y) &\geq 1-C|K|^{1/2}\tau  \quad \text{for every }(x,y)\in \{0,1\}\times [0,1]
\label{prop.derivees.phi.2}.
\end{align}
\end{proposition}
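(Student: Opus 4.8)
\emph{Overview.} The plan is to build $\phi$ explicitly as a composition $\phi=\Phi^{\mathrm{vert}}\circ\Phi^{\mathrm{hor}}$ of two bi-Lipschitz maps: $\Phi^{\mathrm{hor}}$ stretches the ``horizontal part'' of a covering of $K$ in the vertical direction, and $\Phi^{\mathrm{vert}}$ stretches the ``vertical part'' in the horizontal direction. The estimates \eqref{prop.W.1.p}--\eqref{prop.derivees.phi.2} will then follow by bookkeeping, the key point being that all constants stay independent of the number of strips.

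\emph{Step 1 (covering).} First I would invoke the covering result Lemma~\ref{IntersectionFreeCovering}: for a suitably small $\delta>0$ (chosen at the end, depending on $K$ and $\tau$), cover $K$ by horizontal $1$-Lipschitz strips $H_1,\dots,H_m$, i.e.\ $H_i=\{(x,y):|y-g_i(x)|\le\delta\}$ with each $g_i$ $1$-Lipschitz, and by vertical $1$-Lipschitz strips $V_1,\dots,V_n$, in such a way that the total widths $2\delta m$ and $2\delta n$ are both $\le 2\sqrt{|K|}$, the $H_i$ have pairwise disjoint interiors, and likewise the $V_j$. Since $K$ is compact and contained in the open square, the strips may be taken inside $[0,1]^2$, clipped along the faces $y\in\{0,1\}$ (resp.\ $x\in\{0,1\}$) where necessary; no strip can reach both of two opposite faces because $2\delta<1$.

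\emph{Step 2 (the two one-parameter stretching maps).} Fix $\tau':=3\tau$. I would set $\Phi^{\mathrm{hor}}(x,y):=(x,T_x(y))$, where for each fixed $x$ the map $T_x:[0,1]\to[0,1]$ is the increasing piecewise-affine homeomorphism fixing $0$ and $1$ which dilates each slice $I_i(x):=\{y:(x,y)\in H_i\}$ by the factor $1+\tau'$ (dilating one-sidedly, toward the interior, when $I_i(x)$ abuts a face $y\in\{0,1\}$) and compresses every complementary interval by the unique common factor that makes the images tile $[0,1]$. Since the $I_i(x)$ are pairwise disjoint of total length $\le 2\sqrt{|K|}$, this compression factor is $\ge 1-C\sqrt{|K|}\tau$ and $\le 1$; hence $\det\nabla\Phi^{\mathrm{hor}}=\partial_yT_x\in[\,1-C\sqrt{|K|}\tau,\,1+\tau'\,]$, with $\partial_yT_x\ge 1+\tau'$ on $\bigcup_iH_i$, and the cross-derivative $\partial_xT_x$ is $O(\tau)$ because the slice endpoints $g_i(x)\pm\delta$ move at speed $\le 1$ (the $1$-Lipschitz property) and at most one slice is clipped at each face. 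Consequently $\Phi^{\mathrm{hor}}$ is bi-Lipschitz with universal constants, equals the identity in the first coordinate, and fixes the faces $y\in\{0,1\}$. Define $\Phi^{\mathrm{vert}}(x,y):=(S_y(x),y)$ symmetrically from the strips $V_j$, and put $\phi:=\Phi^{\mathrm{vert}}\circ\Phi^{\mathrm{hor}}$.

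\emph{Step 3 (verification).} As a composition of two bi-Lipschitz maps with universal constants, $\phi$ is again bi-Lipschitz with universal constants; this is where the disjointness in Lemma~\ref{IntersectionFreeCovering} is essential, since it makes the constants of $\Phi^{\mathrm{hor}}$ and $\Phi^{\mathrm{vert}}$ independent of $m$ and $n$. Relations \eqref{prop.bord.pres.glob.1}--\eqref{prop.derivees.phi.2} follow from the structure of the two maps (identity in one coordinate, fixing two faces, and $\partial_yS_y,\partial_xT_x\ge 1-C\sqrt{|K|}\tau$). For \eqref{prop.det.on.M}: a point of $K$ lies in some $H_i$ or some $V_j$; if it lies in $H_i$, then $\det\nabla\Phi^{\mathrm{hor}}\ge 1+\tau'$ there and $\det\nabla\Phi^{\mathrm{vert}}\ge 1-C\sqrt{|K|}\tau$ at its image, so by the chain rule $\det\nabla\phi\ge(1+3\tau)(1-C\sqrt{|K|}\tau)\ge 1+\tau$ once $c$ is small enough, and the case $V_j$ is symmetric; similarly $\det\nabla\phi\ge(1-C\sqrt{|K|}\tau)^2\ge 1-C\sqrt{|K|}\tau$, which is \eqref{prop.det.global}. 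Finally \eqref{prop.W.1.p}--\eqref{prop.nabla.phi.bounded.by.det} come from a direct computation: $\phi-\id$ is $O(\sqrt{|K|}\tau)$ in $L^\infty$ because a point is displaced by at most (total length of strips ``below/left of it'')$\,\times\tau'\le 2\sqrt{|K|}\tau'$; $\nabla\phi-\Id$ is $O(\tau)$ on $\bigcup_iH_i\cup\bigcup_jV_j$, a set of measure $\le C\sqrt{|K|}$, and $O(\sqrt{|K|}\tau)$ off it, which combine to $\|\nabla\phi-\Id\|_{L^p}\le C|K|^{1/(2p)}\tau$; and \eqref{prop.nabla.phi.bounded.by.det} holds because $|\nabla\phi|$ and $\det\nabla\phi$ are comparable on the stretched strips (both $\sim 1+\tau'$ or $(1+\tau')^2$) and both $\sim 1$ elsewhere.

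\emph{Main obstacle.} I expect the delicate part to be verifying that all the Lipschitz and $W^{1,p}$ constants are genuinely independent of the number of strips (equivalently of $|K|$ and $\delta$): this is exactly what the bounded multiplicity provided by Lemma~\ref{IntersectionFreeCovering} is for, and it also underlies the bookkeeping in the $W^{1,p}$ estimate, in particular the control of $\partial_xT_x$ and $\partial_yS_y$ through the $1$-Lipschitz bounds on the strip profiles and the fact that at most one slice is clipped at each face.
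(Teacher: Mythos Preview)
Your overall strategy---cover $K$ by disjoint $1$-Lipschitz strips via Lemma~\ref{IntersectionFreeCovering} and stretch each family in the transverse direction---is exactly right, and your bookkeeping for \eqref{prop.W.1.p}, \eqref{prop.L.infty}, \eqref{prop.det.global}, and the boundary conditions is essentially what the paper does. But there is a real gap in Step~3, in the verification of \eqref{prop.det.on.M}.

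You define $\phi=\Phi^{\mathrm{vert}}\circ\Phi^{\mathrm{hor}}$ and then assert that for points of $K$ lying in a vertical strip ``the case $V_j$ is symmetric''. It is not: the order of composition breaks the symmetry. Take $(x,y)\in K\cap V_j$ with $(x,y)\notin\bigcup_i H_i$. Then $\det\nabla\Phi^{\mathrm{hor}}(x,y)$ equals the compression factor, so you need $\det\nabla\Phi^{\mathrm{vert}}\ge 1+\tau'$ at the \emph{image} $(x,T_x(y))$. But $|T_x(y)-y|$ can be as large as $C\delta N\tau'$, and since $g_j$ is only $1$-Lipschitz, $|x-g_j(T_x(y))|$ can exceed $\delta$ by the same amount; nothing forces $\delta N\tau\lesssim\delta$. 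Concretely, with a single horizontal strip and a single diagonal vertical strip $g_1(y)=y$, a point $(a,a)\in K$ above the horizontal strip is displaced vertically by order $\delta\tau$, so for $\tau$ large (even with $\sqrt{|K|}\tau$ small) its image lands well outside $V_1$, where $\Phi^{\mathrm{vert}}$ compresses. You then get $\det\nabla\phi\approx(1-C\sqrt{|K|}\tau)^2$, which is \emph{below} $1$, not $\ge 1+\tau$. Widening the $V_j$ for $\Phi^{\mathrm{vert}}$ does not help in general, since adjacent vertical strips may already touch ($g_{j+1}=g_j-2\delta$), so any widening destroys disjointness and the piecewise-affine definition of $S_y$.

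The paper sidesteps this by \emph{not} composing. It takes $\phi=(\phi_1,\phi_2)$ with the two components defined independently,
\[
\phi_2(x,y)=(1-4\delta N\tau)\,y+2\tau\sum_{i=1}^N\bigl(\min\{2\delta,y-f_i(x)+\delta\}-\min\{0,y-f_i(x)+\delta\}\bigr),
\]
and symmetrically for $\phi_1$ using the $g_j$. One then computes $\nabla\phi$ directly. At a point of $K$ in $V_{j_0}$ but in no $H_i$, the $(2,1)$-entry $\partial_x\phi_2$ vanishes, so $\det\nabla\phi$ is the product of the diagonal entries, $\ge(1+2\tau-4\delta M\tau)(1-4\delta N\tau)\ge 1+\tau$; at a point lying in both a horizontal and a vertical strip, the quadratic-in-$\tau$ cross term is controlled by the pointwise inequality $\bigl|\sum_j g_j'\chi_{V_j}\bigr|\,\bigl|\sum_i f_i'\chi_{H_i}\bigr|\le \bigl(\sum_j\chi_{V_j}\bigr)\bigl(\sum_i\chi_{H_i}\bigr)$, which uses both $|f_i'|,|g_j'|\le 1$ and the disjointness from Lemma~\ref{IntersectionFreeCovering}. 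In effect, the fix for your argument is simply to set $\phi:=(\Phi^{\mathrm{vert}}_1,\Phi^{\mathrm{hor}}_2)$ rather than $\Phi^{\mathrm{vert}}\circ\Phi^{\mathrm{hor}}$.
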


\begin{proof}[Proof of Lemma \ref{lemma.simpl.boundary.values}.]

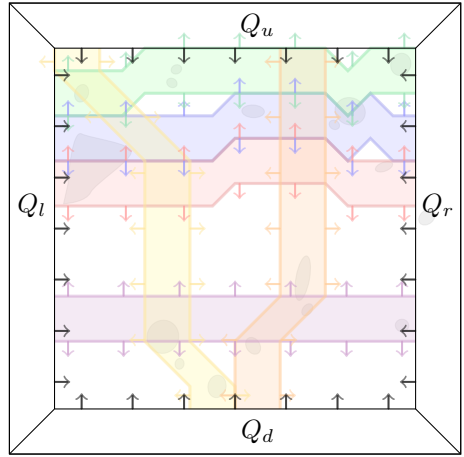
\begin{figure}
\begin{tikzpicture}[scale=0.6]
\draw[rotate around={-20:(0.1,6.5)},fill,gray,opacity=0.3] (0.1,6.5) ellipse
(0.15cm and 0.2cm);
\draw[rotate around={-30:(2.1,6.73)},fill,gray,opacity=0.3] (2.1,6.73) ellipse
(0.12cm and 0.1cm);
\draw[rotate around={25:(2.2,7.03)},fill,gray,opacity=0.3] (2.2,7.03) ellipse
(0.12cm and 0.1cm);
\draw[fill,gray,opacity=0.3] (-0.3,4.05) .. controls (-0.2,4.05) and (0.2,4.1) ..
(0.3,4.3) .. controls (0.4,4.5) and (1.2,5.0) .. (1.2,5.2)
.. controls (1.2,5.4) and (0.3,5.5) .. (0.1,5.6) .. controls (-0.1,5.7) and
(-0.2,4.7) .. (-0.2,4.5) -- cycle;
\draw[rotate around={80:(3.1,0.0)},fill,gray,opacity=0.3] (3.1,0.0) ellipse
(0.25cm and 0.2cm);
\draw[rotate around={-20:(1.9,1.1)},fill,gray,opacity=0.3] (1.9,1.1) ellipse
(0.35cm and 0.37cm);
\draw[rotate around={81:(2.27,0.5)},fill,gray,opacity=0.3] (2.27,0.5) ellipse
(0.12cm and 0.1cm);
\draw[rotate around={-10:(5,2.4)},fill,gray,opacity=0.3] (5,2.4) ellipse
(0.15cm and 0.5cm);
\draw[rotate around={51:(5.1,1.7)},fill,gray,opacity=0.3] (5.1,1.7) ellipse
(0.15cm and 0.1cm);
\draw[rotate around={-10:(7.2,1.35)},fill,gray,opacity=0.3] (7.2,1.35) ellipse
(0.15cm and 0.14cm);
\draw[rotate around={-70:(6.8,4.87)},fill,gray,opacity=0.3] (6.8,4.87) ellipse
(0.1cm and 0.2cm);
\draw[rotate around={40:(7.75,3.75)},fill,gray,opacity=0.3] (7.75,3.75) ellipse
(0.2cm and 0.15cm);
\draw[rotate around={-20:(7.3,7.2)},fill,gray,opacity=0.3] (7.2,7.1) ellipse
(0.22cm and 0.25cm);
\draw[rotate around={-20:(6.05,6.1)},fill,gray,opacity=0.3] (6.05,6.1) ellipse
(0.34cm and 0.3cm);
\draw[rotate around={-30:(5.7,5.8)},fill,gray,opacity=0.3] (5.7,5.8) ellipse
(0.12cm and 0.1cm);
\draw[rotate around={33:(3.9,0.9)},fill,gray,opacity=0.3] (3.9,0.9) ellipse
(0.15cm and 0.2cm);
\draw[rotate around={87:(3.9,6.1)},fill,gray,opacity=0.3] (3.9,6.1) ellipse
(0.14cm and 0.25cm);
\draw[fill,green,opacity=0.2] (-0.5,7) -- (0,7) -- (1,7) -- (1.5,7.5)
-- (5.5,7.5) -- (6,7) -- (6.5,7.5) -- (7.5,7.5)
-- (7.5,6.5) -- (6.5,6.5) -- (6,6) -- (5.5,6.5) -- (1.5,6.5) -- (1,6)
-- (0,6) -- (-0.5,6);
\draw[fill,blue,opacity=0.2] (-0.5,6) -- (0,6) -- (3,6) -- (3.5,6.5)
-- (5.5,6.5) -- (6,6) -- (6.5,6.5) -- (7,6) -- (7.5,6)
--(7.5,5) -- (7,5) -- (6.5,5.5) -- (6,5) -- (5.5,5.5) -- (3.5,5.5) -- (3,5)
-- (0,5) -- (-0.5,5);
\draw[fill,red,opacity=0.17] (-0.5,5) -- (0,5) -- (3,5) -- (3.5,5.5)
-- (5.5,5.5) -- (6,5) -- (7,5) -- (7.5,5)
-- (7.5,4) -- (7,4) -- (6,4) -- (5.5,4.5) -- (3.5,4.5) -- (3,4) -- (0,4)
-- (-0.5,4);
\draw[fill,violet,opacity=0.2] (-0.5,1) -- (7,1) -- (7.5,1)
-- (7.5,2) -- (7,2) -- (-0.5,2);
\draw[fill,yellow,opacity=0.27] (0.5,7.5) -- (0.5,7) -- (2.5,5) -- (2.5,1)
-- (3.5,0) -- (3.5,-0.5)
-- (2.5,-0.5) -- (2.5,0) -- (1.5,1) -- (1.5,5) -- (-0.5,7) -- (-0.5,7.5);
\draw[fill,orange,opacity=0.24] (5.5,7.5) -- (5.5,3) -- (5.5,2) -- (4.5,1) --
(4.5,-0.5)
-- (3.5,-0.5) -- (3.5,1) -- (4.5,2) -- (4.5,3) -- (4.5,7.5);
\draw (-1.5,-1.5) -- (-1.5,8.5) -- (8.5,8.5) -- (8.5,-1.5) -- cycle;
\draw[blue!80!black,very thick,opacity=0.4] (-0.5,6) -- (0,6) -- (3,6) -- (3.5,6.5)
-- (5.5,6.5) -- (6,6) -- (6.5,6.5) -- (7,6) -- (7.5,6);
\draw[blue!80!black,very thick,opacity=0.4] (7.5,5) -- (7,5) -- (6.5,5.5)
-- (6,5) -- (5.5,5.5) -- (3.5,5.5) -- (3,5) -- (0,5) -- (-0.5,5);
\draw[red!75!black,very thick,opacity=0.4] (-0.5,5) -- (0,5) -- (3,5) -- (3.5,5.5)
-- (5.5,5.5) -- (6,5) -- (7,5) -- (7.5,5);
\draw[red!75!black,very thick,opacity=0.4]
(7.5,4) -- (7,4) -- (6,4) -- (5.5,4.5) -- (3.5,4.5) -- (3,4) -- (0,4)
-- (-0.5,4);
\draw[green!75!blue,very thick,opacity=0.5]
(-0.5,7) -- (0,7) -- (1,7) -- (1.5,7.5) -- (5.5,7.5) -- (6,7) -- (6.5,7.5)
-- (7.5,7.5);
\draw[green!75!blue,very thick,opacity=0.5]
(7.5,6.5) -- (6.5,6.5) -- (6,6) -- (5.5,6.5) -- (1.5,6.5) -- (1,6)
-- (0,6) -- (-0.5,6);
\draw[violet,very thick,opacity=0.4] (-0.5,1) -- (7,1) -- (7.5,1);
\draw[violet,very thick,opacity=0.4] (7.5,2) -- (7,2) -- (-0.5,2);
\draw[yellow!75!red,very thick,opacity=0.5] (0.5,7.5) -- (0.5,7) -- (2.5,5)
-- (2.5,1) -- (3.5,0) -- (3.5,-0.5);
\draw[yellow!75!red,very thick,opacity=0.5]
(2.5,-0.5) -- (2.5,0) -- (1.5,1) -- (1.5,5) -- (-0.5,7) -- (-0.5,7.5);
\draw[orange,very thick,opacity=0.5] (5.5,7.5) -- (5.5,3) -- (5.5,2) -- (4.5,1) --
(4.5,-0.5);
\draw[orange,very thick,opacity=0.5]
(3.5,-0.5) -- (3.5,1) -- (4.5,2) -- (4.5,3) -- (4.5,7.5);
\foreach \x/\y in {-0.2/5,1.08/5,2.37/5,3.6/5.5,4.83/5.5,6.1/5,7.25/5}
  \draw[red,thick,->,opacity=0.5] (\x,\y) -- (\x,\y+0.35);
\foreach \x/\y in {-0.2/4,1.08/4,2.37/4,3.6/4.5,4.83/4.5,6.1/4,7.25/4}
  \draw[red,thick,->,opacity=0.5] (\x,\y) -- (\x,\y-0.35);
\foreach \x/\y in {-0.2/6,1.08/6,2.37/6,3.6/6.5,4.83/6.5,6.1/6,7.25/6}
  \draw[blue,thick,->,opacity=0.5] (\x,\y) -- (\x,\y+0.35);
\foreach \x/\y in {-0.2/5,1.08/5,2.37/5,3.6/5.5,4.83/5.5,6.1/5,7.25/5}
  \draw[blue,thick,->,opacity=0.5] (\x,\y) -- (\x,\y-0.35);
\foreach \x/\y in {-0.2/7,1.08/7.08,2.37/7.5,3.6/7.5,4.83/7.5,6.1/7.1,7.25/7.5}
  \draw[green!70!blue,thick,->,opacity=0.5] (\x,\y) -- (\x,\y+0.35);
\foreach \x/\y in {-0.2/6,1.08/6.08,2.37/6.5,3.6/6.5,4.83/6.5,6.1/6.1,7.25/6.5}
  \draw[green!70!blue,thick,->,opacity=0.5] (\x,\y) -- (\x,\y-0.35);
\foreach \x/\y in {-0.2/2,1.03/2,2.27/2,3.5/2,4.73/2,5.97/2,7.2/2}
  \draw[violet,thick,->,opacity=0.5] (\x,\y) -- (\x,\y+0.35);
\foreach \x/\y in {-0.2/1,1.03/1,2.27/1,3.5/1,4.73/1,5.97/1,7.2/1}
  \draw[violet,thick,->,opacity=0.5] (\x,\y) -- (\x,\y-0.35);
\foreach \x/\y in {3.5/-0.2,2.5/1.03,2.5/2.27,2.5/3.5,2.5/4.73,1.51/5.97,0.5/7.2}
  \draw[yellow!75!red,thick,->,opacity=0.5] (\x,\y) -- (\x+0.35,\y);
\foreach \x/\y in {2.5/-0.2,1.5/1.03,1.5/2.27,1.5/3.5,1.5/4.73,0.51/5.97,-0.5/7.2}
  \draw[yellow!75!red,thick,->,opacity=0.5] (\x,\y) -- (\x-0.35,\y);
\foreach \x/\y in {3.5/-0.2,3.53/1.03,4.5/2.27,4.5/3.5,4.5/4.73,4.5/5.97,4.5/7.2}
  \draw[orange,thick,->,opacity=0.5] (\x,\y) -- (\x-0.35,\y);
\foreach \x/\y in {4.5/-0.2,4.53/1.03,5.5/2.27,5.5/3.5,5.5/4.73,5.5/5.97,5.5/7.2}
  \draw[orange,thick,->,opacity=0.5] (\x,\y) -- (\x+0.35,\y);
\draw[opacity=0.6,fill=white,draw=white] (-1.3,-1.3) -- (-1.3,8.3) -- (8.3,8.3) -- (8.3,-1.3) -- cycle;
\draw (-1,4) node{$Q_l$};
\draw (8,4) node{$Q_r$};
\draw (4,8) node{$Q_u$};
\draw (4,-1) node{$Q_d$};
\draw (-1.5,-1.5) -- (-1.5,8.5) -- (8.5,8.5) -- (8.5,-1.5) -- (-1.5,-1.5);
\draw (-0.5,-0.5) -- (-0.5,7.5) -- (7.5,7.5) -- (7.5,-0.5) -- (-0.5,-0.5);
\draw (-0.5,-0.5) -- (-1.5,-1.5);
\draw (-0.5,7.5) -- (-1.5,8.5);
\draw (7.5,-0.5) -- (8.5,-1.5);
\draw (7.5,7.5) -- (8.5,8.5);
\foreach \x/\y in
{0.1/-0.5,1.23/-0.5,2.37/-0.5,3.5/-0.5,4.63/-0.5,5.77/-0.5,6.9/-0.5}
  \draw[black,thick,->,opacity=0.5] (\x,\y) -- (\x,\y+0.35);
\foreach \x/\y in
{0.1/7.5,1.23/7.5,2.37/7.5,3.5/7.5,4.63/7.5,5.77/7.5,6.9/7.5}
  \draw[black,thick,->,opacity=0.5] (\x,\y) -- (\x,\y-0.35);
\foreach \x/\y in
{7.5/0.1,7.5/1.23,7.5/2.37,7.5/3.5,7.5/4.63,7.5/5.77,7.5/6.9}
  \draw[black,thick,->,opacity=0.5] (\x,\y) -- (\x-0.35,\y);
\foreach \x/\y in
{-0.5/0.1,-0.5/1.23,-0.5/2.37,-0.5/3.5,-0.5/4.63,-0.5/5.77,-0.5/6.9}
  \draw[black,thick,->,opacity=0.5] (\x,\y) -- (\x+0.35,\y);
\end{tikzpicture}
\caption{A sketch of the construction for the correction of the boundary values. In the inner square, the construction from the proof of Proposition~\ref{Prop.Bi.Lip.simpl} is depicted. \label{BoundaryCorrection}}
\end{figure}

First by Lemmas \ref{Lemma.simpl.domain} and \ref{Lemma.simpl.set} we know that it is enough to prove Theorem \ref{BiLipschitzMapTheorem} when $\Omega=(0,1)^2$ (or equivalently when $\Omega=(-1,1)^2$ -- due to certain symmetries in our construction below, it will be convenient to work on $(-1,1)^2$) and when $M$ (the set that will be stretched) is compact.
It hence remains to prove that Proposition \ref{Prop.Bi.Lip.simpl} implies Theorem \ref{BiLipschitzMapTheorem} in the case of $\Omega=(-1,1)^2$ and compact sets $M\subset (-1,1)^2$. We basically have to show how to change the stretching map $\phi$ so that it satisfies $\phi=\operatorname{id}$ on the boundary (while preserving the other properties).
The idea of the proof goes as follows:

We consider the subsquare $(-1+|M|^{1/2},1-|M|^{1/2})^2\subset (-1,1)^2$ (i.e. we allow for a boundary layer of thickness $|M|^{1/2}$) and use Proposition \ref{Prop.Bi.Lip.simpl}  to obtain a stretching map on the subsquare. On the boundary layer, we interpolate between the boundary values of our stretching map on the subsquare and the identity boundary conditions on the original square. Furthermore, we stretch the full boundary layer, which means that we have to compress the subsquare in the interior slightly. Due to the size $|M|^{1/2}$ of the boundary layer, this will not destroy the stretching property on $M$ in the subsquare.
\smallskip

\textit{Step 1.} We consider the subsquare $S:=(-1+|M|^{1/2},1-|M|^{1/2})^2$
and apply (the rescaled version of) Lemma
\ref{Prop.Bi.Lip.simpl} to the set $M\cap S$ with $2\tau$ in place of $\tau$.
This yields a bi-Lipschitz map $\tilde \phi:\overline{S}\rightarrow \overline{S}$ with the properties (where we abbreviate $I_M:=(-1+|M|^{1/2},1-|M|^{1/2})$ and use the fact that we may assume that $|M|^{1/2}\leq \frac{1}{2}$)
\begin{subequations}
\label{PropertiesTildePhi}
\begin{align}
\label{W1pBoundDerivative}
\|\tilde\phi-\operatorname{id}\|_{W^{1,p}(S)}&\leq C|M|^{1/(2p)}
\tau\quad
\text{for every }1\leq p\leq \infty,
\\
\label{LinftyTildePhi}
\|\tilde\phi-\operatorname{id}\|_{L^\infty(S)}&\leq C|M|^{1/2}\tau,
\\
|\nabla \tilde\phi|&\leq C\det \nabla \tilde\phi \quad \text{a.\,e.\ in }S,
\\
\det \nabla \tilde\phi&\geq 1+2\tau\quad \text{a.\,e.\ on }M\cap S,
\\
\det \nabla \tilde\phi&\geq 1-C|M|^{1/2}\tau\quad \text{a.\,e.\ in }S,
\\
\tilde\phi_2(s,-1+|M|^{1/2})&=\tilde\phi_1(-1+|M|^{1/2},s)=-1+|M|^{1/2}\quad
\text{for }s\in I_M,\label{bord.phi.tilde.x}
\\
\tilde\phi_2(s,1-|M|^{1/2})&=\tilde\phi_
1(1-|M|^{1/2},s)=1-|M|^{1/2}\quad
\text{for }s\in I_M,\label{bord.phi.tilde.y}
\\
\partial_x \tilde\phi_1(x,y) &\geq 1-C|M|^{1/2}\tau \quad \text{for every }(x,y)\in [0,1]\times \{0,1\},
\\
\label{LowerBoundyDerivative}
\partial_y \tilde\phi_2(x,y) &\geq 1-C|M|^{1/2}\tau \quad  \text{for every }(x,y)\in \{0,1\} \times [0,1].
\end{align}
\end{subequations}

\textit{Step 2.} We now define our map $\phi:[-1,1]^2\rightarrow [-1,1]^2$; a sketch of our
construction is provided in Figure \ref{BoundaryCorrection}. On the subsquare
$\overline{S}$, we set
\begin{align}
\label{DefinitionPhiCentralSquare}
\phi(x,y):=
\frac{1-|M|^{1/2}(1+2\tau)}{1-|M|^{1/2}}
\tilde \phi(x,y).
\end{align}
It remains to define $\phi$ in $[-1,1]^2\setminus \overline{S}$ which we divide into four quadrilaterals $Q_l,Q_u,Q_d$ and $Q_r$ (see Figure \ref{BoundaryCorrection}).
In the left quadrilateral $Q_l$, we define $\phi$ by setting
\begin{align}\label{def.on.Q.l}
\phi(x,y):=
\begin{pmatrix}
-1+(1+2\tau)(1+x)
\\
\frac{1-|M|^{1/2}+x}{x|M|^{1/2}} y + \frac{1+x}{|M|^{1/2}}
\phi_2 \left(-1+|M|^{1/2},
\frac{-1+|M|^{1/2}}{x}y
\right)
\end{pmatrix}.
\end{align}
In the other three quadrilaterals, we define $\phi$ by an analogous construction.
We claim that the $\phi$ constructed by this procedure has all the desired properties. This will be done in the remaining two steps.\smallskip

\textit{Step 3.} First, we easily see that $\phi=\operatorname{id}$ on $\partial [-1,1]^2$, i.e. that \eqref{main.boundary} holds.
We claim that $\phi$ is Lipschitz in $[-1,1]^2$. Since $\tilde \phi$ is Lipschitz in $\overline{S}$ and thus, by definition, $\phi$ is Lipschitz in $\overline{S}$, $\overline{Q_l}$, $\overline{Q_r}$, $\overline{Q_u}$, and $\overline{Q_d}$,
it is enough to prove that $\phi$ is continuous on
\begin{align*}
\{(\pm(1-\theta),\pm(1-\theta)),\theta\in [0,\sqrt{|M|}]\}\cup \partial S.
\end{align*}
The continuity of $\phi$ on the first above set (i.\,e.\ the four diagonal segments) is a direct consequence of the definition of $\phi$: For example, on the lower left diagonal we have $\phi(-1+\theta,-1+\theta)=(-1+(1+2\tau)\theta,-1+(1+2\tau)\theta)$ for $\theta\in [0,|M|^{1/2}]$. To prove the continuity on $\partial S$, by symmetry of our construction it is enough to prove it on left vertical segment of $S$, i.e. on
\begin{align*}
\{-1+|M|^{1/2}\}\times I_M.
\end{align*}
First, by (\ref{bord.phi.tilde.x}) and (\ref{DefinitionPhiCentralSquare}) we have
\begin{align}\label{equ.for.phi.1}
\phi_1(-(1-|M|^{1/2}),y)=-1+|M|^{1/2}(1+2\tau).
\text{ for }y\in I_M.
\end{align}
Using (\ref{def.on.Q.l}), (\ref{DefinitionPhiCentralSquare}), and (\ref{equ.for.phi.1}), we have, for any $y\in I_M$,
\begin{align*}
\lim_{x\nearrow -1+|M|^{1/2}} \phi(x,y) =
\begin{pmatrix}
-1+(1+2\tau)|M|^{1/2}
\\
\phi_2(-1+|M|^{1/2},y)
\end{pmatrix}
=
\phi(-1+|M|^{1/2},y),
\end{align*}
proving the claim.

\textit{Step 4.}
In this step we verify the assertions \eqref{main.W.1.p} through \eqref{main.det.global}. It is sufficient to check these assertions separately on $S,Q_l,Q_r,Q_u$ and $Q_d$. In the central square $S$, these properties are a straightforward consequence (taking $|M|$ and $\tau |M|^{1/2}$ smaller if necessary) of the properties \eqref{PropertiesTildePhi} and our definition \eqref{DefinitionPhiCentralSquare}, since the prefactor in \eqref{DefinitionPhiCentralSquare} may be rewritten as
\begin{align*}
1-\frac{2\tau |M|^{1/2}}{1-|M|^{1/2}}.
\end{align*}
It is therefore sufficient to check \eqref{main.W.1.p} through \eqref{main.det.global} on the left quadrilateral $Q_l$, as the construction of $\phi$ in the three other quadrilaterals is analogous.
\smallskip

\textit{Step 4.1.} In $Q_l$, we have
\begin{align}
\label{LemmaPhiDx}
\partial_x \phi(x,y)
=
\begin{pmatrix}
1+2\tau
\\
\frac{-1+|M|^{1/2}}{x^2 |M|^{1/2}}y[1-(1+x)\partial_y \phi_2(\ldots)]
+\frac{1}{|M|^{1/2}}\phi_2(\ldots)
\end{pmatrix}
\end{align}
and
\begin{align}\label{LemmaPhiDy}
\partial_y \phi(x,y)
&=
\begin{pmatrix}
0
\\
\frac{1-|M|^{1/2}+x}{x|M|^{1/2}}+\frac{(1+x)(-1+|M|^{1/2})}{x|M|^{1/2}}
\partial_y\phi_2(\ldots)
\end{pmatrix}
\end{align}
where "$(\ldots)$" stands (and will stand) for
\begin{align*}
\left(-1+|M|^{1/2},
\frac{-1+|M|^{1/2}}{x}y
\right).
\end{align*}
From  (\ref{LowerBoundyDerivative}) and (\ref{DefinitionPhiCentralSquare}) we deduce
\begin{align} \partial_{y}\phi_2(\ldots)\geq 1-C\tau |M|^{1/2}\label{phi.2.y.lower.bound}.
\end{align}
From \eqref{LemmaPhiDx}, \eqref{LemmaPhiDy} and \eqref{phi.2.y.lower.bound} we deduce that, for $(x,y)\in Q_l,$
\begin{align}
\nonumber
\det \nabla \phi(x,y)&=(1+2\tau) \left(
\frac{1-|M|^{1/2}+x}{x|M|^{1/2}}
+\frac{(1+x)(-1+|M|^{1/2})}{x|M|^{1/2}}\partial_y\phi_2(\ldots)
\right)\\ \nonumber
&\geq (1+2\tau) \left(
\frac{1-|M|^{1/2}+x}{x|M|^{1/2}}
+\frac{(1+x)(-1+|M|^{1/2})}{x|M|^{1/2}}(1-C\tau |M|^{1/2})
\right)
\\&
\label{LemmaPhiDet}
= (1+2\tau) \left(1-C \tau |M|^{1/2} \frac{(1+x)(-1+|M|^{1/2})}{x|M|^{1/2}} \right)
\geq 1+\tau,
\end{align}
where in the last inequality we have used the fact that we have $|1+x|\leq |M|^{1/2}$ in $Q_l$ (as well as the smallness condition on $|M|$ and $|M|^{1/2} \tau$).
Therefore \eqref{main.det.on.M} and \eqref{main.det.global} are established.
\smallskip

\textit{Step 4.2.}
From \eqref{W1pBoundDerivative} for $p=\infty$, \eqref{LinftyTildePhi}, and \eqref{DefinitionPhiCentralSquare} we deduce (see also the beginning of Step 4) that
 \begin{align}\label{equ.intermediaire}
 |\partial_y\phi_2(\ldots)-1|\leq C\tau\quad \text{and}\quad \left|\phi_2(\ldots)-y\frac{-1+|M|^{1/2}}{x}\right|\leq C\tau |M|^{1/2}.
 \end{align}
By \eqref{LemmaPhiDx} and \eqref{equ.intermediaire} we have for $(x,y)\in Q_l$
\begin{align}
\Big|\partial_x \phi(x,y)-
\begin{pmatrix}
1\\0
\end{pmatrix}
\Big|
\leq& C\tau + \left|\frac{-1+|M|^{1/2}}{x^2 |M|^{1/2}}y(1+x)(\partial_y \phi_2(\ldots)-1)\right|
\nonumber\\&
+\frac{1}{|M|^{1/2}}\left|\phi_2(\ldots)-y\frac{-1+|M|^{1/2}}{x}\right|
\nonumber\\
\leq&
C\tau+C\tau+\frac{1}{|M|^{1/2}} C|M|^{1/2}\tau
\nonumber\\
=&C\tau,\label{estimate.phi.dx}
\end{align}
where in the last inequality we have used the fact that $|1+x|\leq |M|^{1/2}$ holds in the left quadrilateral $Q_l$.

Similarly, we have by \eqref{LemmaPhiDy} and \eqref{equ.intermediaire} for $(x,y)\in Q_l$
\begin{align}
\Big|\partial_y \phi(x,y)-
\begin{pmatrix}
0\\1
\end{pmatrix}
\Big|
\leq \left|
\frac{(1+x)(1-|M|^{1/2})}{-x|M|^{1/2}}
(\partial_y\phi_2(\ldots)-1)\right|
\leq C\tau,\label{estimate.phi.dy}
\end{align}
where in the last inequality we have used the fact that $|1+x|\leq |M|^{1/2}$ holds in the left quadrilateral.\smallskip

\textit{Step 4.3.}
As the area of the left quadrilateral $Q_l$ is bounded by $|M|^{1/2}$,  \eqref{estimate.phi.dx} and \eqref{estimate.phi.dy} establish the bound \eqref{main.W.1.p}. Next from \eqref{LemmaPhiDet},  \eqref{estimate.phi.dx} and \eqref{estimate.phi.dy} we directly get \eqref{main.nabla.phi.bounded.by.det}. It remains to show \eqref{main.L.infty}. This however is an easy consequence of the bound \eqref{main.W.1.p} for $p=\infty$, the fact that on the left edge of our left quadrilateral we have $\phi=\id$, as well as the fact that any point in the left quadrilateral has distance of at most $|M|^{1/2}$ to the left edge of the quadrilateral.\smallskip

\textit{Step 5.}  Since $\phi=\operatorname{id}$ on $\partial \Omega$, since $\phi$ is Lipschitz and since from \eqref{main.det.global}, up to taking $\tau |M|^{1/2}$ smaller if necessary,
\begin{align*}
\det\nabla \phi\geq 1/2,
\end{align*}
classical degree theory results show that $\phi$ is bi-Lipschitz   (see e.\,g.\ Theorem 2 in \cite{Ball2}).
This concludes the proof.
\end{proof}

\subsection{Covering by strips}\label{SubSection.covering}

The following lemma is a consequence of the (combinatorial) Erd\H{o}s-Szekeres theorem, see Theorem 2.1 in Alberti, Cs\"ornyei, and Preiss \cite{Alberti}.
\begin{lemma}
\label{CombinatorialResult}
Let $S\subset \mathbb{R}^2$ be a set containing a finite number of points. Then there exist $N$ functions $f_i: \mathbb{R} \rightarrow \mathbb{R}$ and $M$ functions $g_j: \mathbb{R} \rightarrow \mathbb{R}$ with the following properties:
\begin{itemize}
\item We have the estimate $N\leq \sqrt{\sharp S}$ and $M\leq \sqrt{\sharp S}$.
\item The functions $f_i$ and $g_j$ are Lipschitz continuous with Lipschitz constant of at most $1$.
\item The set $S$ is contained in the union of the graphs of the $f_i$ (considered as functions of $x$) and the graphs of the $g_j$ (considered as functions of $y$), i.e.\ we have
\begin{align*}
S\subset \bigcup_{1 \leq i \leq N} \{(x,f_i(x)):x\in \mathbb{R}\} \cup \bigcup_{1 \leq j \leq M} \{(g_j(y),y):y \in \mathbb{R}\}.
\end{align*}
\end{itemize}
\end{lemma}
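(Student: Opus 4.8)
The plan is to strip away the geometry by a $45^\circ$ rotation, reducing the statement to a purely one-dimensional assertion about covering a sequence of reals by monotone subsequences, and then to prove that assertion by an \emph{iterated} application of Dilworth's theorem (equivalently, of the Erd\H{o}s--Szekeres counting argument), following Alberti, Cs\"ornyei, and Preiss \cite{Alberti}.

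\emph{Step 1: reduction via rotation.} I would first pass to the coordinates $u=x+y$, $v=x-y$. The observation to exploit is that, given finitely many points, listing them in order of increasing $u$ and joining consecutive ones by segments (then extending by a constant outside the spanned interval) produces the graph of a $1$-Lipschitz function of $x$ exactly when the associated $v$-values are non-decreasing along the list: each segment then has slope $(\Delta u-\Delta v)/(\Delta u+\Delta v)\in[-1,1]$, and since distinct points of $S$ have distinct $(u,v)$ the coordinate $x=(u+v)/2$ is strictly increasing along the list, so one genuinely obtains a function of $x$. Symmetrically, a list that is non-decreasing in $u$ and non-increasing in $v$ yields the graph of a $1$-Lipschitz function of $y$. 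Hence, writing $n=\sharp S$, ordering the points of $S$ lexicographically in $(u,v)$, and letting $v_1,\dots,v_n$ be the resulting sequence of $v$-values, it suffices to cover $\{1,\dots,n\}$ by at most $\sqrt n$ non-decreasing subsequences of $(v_i)$ and at most $\sqrt n$ non-increasing subsequences. (The lexicographic tie-breaking guarantees that two points sharing a $u$-value are never placed in a common non-increasing subsequence, which is exactly what the function-of-$y$ construction needs.)

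\emph{Step 2: the combinatorial core.} For the sequence I would argue iteratively. If $(v_i)$ has no strictly decreasing subsequence of length $>\sqrt n$, then $\{1,\dots,n\}$ is covered by at most $\sqrt n$ non-decreasing subsequences: this is Dilworth's theorem for the poset ``$i\le j$ and $v_i\le v_j$'', or, concretely, one assigns to each index $i$ the length $d(i)$ of the longest strictly decreasing subsequence of $(v_i)$ ending at $i$ and checks that the fibers $\{i:d(i)=t\}$ are non-decreasing subsequences and that there are at most $\sqrt n$ of them. If, on the other hand, a strictly decreasing subsequence of length $>\sqrt n$ exists, I extract one of maximal length — it becomes one of the $g_j$ — delete its indices, and repeat on the shorter sequence, keeping the \emph{same} threshold $\sqrt n$. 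Since every extraction deletes more than $\sqrt n$ indices, at most $\lfloor\sqrt n\rfloor$ extractions occur; when the process halts, the leftover has no strictly decreasing subsequence of length $>\sqrt n$ and so is covered by at most $\lfloor\sqrt n\rfloor$ non-decreasing subsequences, which give the $f_i$. This yields $N,M\le\sqrt n$.

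\emph{Step 3: back to Lipschitz functions, and the main difficulty.} It then remains to turn each monotone subsequence produced above into a function $\mathbb{R}\to\mathbb{R}$ by piecewise-linear interpolation through the corresponding points and constant extension outside, verifying, exactly as sketched in Step 1, that all slopes lie in $[-1,1]$ and that the relevant coordinate is strictly monotone so that a genuine $1$-Lipschitz function results; this last verification is routine. The real obstacle is Step 2: one cannot get away with a single application of Dilworth or Erd\H{o}s--Szekeres, since a sequence may simultaneously carry a long increasing and a long decreasing subsequence, so that covering everything by non-decreasing pieces alone, or by non-increasing pieces alone, violates the bound $\sqrt n$. The iterated peeling of long decreasing subsequences is precisely what forces \emph{both} bounds to hold at once, and the fact that each peeled piece has length strictly more than $\sqrt n$ is what makes the count land on $\sqrt n$ rather than a larger constant.
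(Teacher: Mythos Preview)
Your proposal is correct and is precisely the argument the paper defers to: the paper does not prove this lemma itself but simply records it as a consequence of the Erd\H{o}s--Szekeres theorem, citing Theorem 2.1 in Alberti, Cs\"ornyei, and Preiss \cite{Alberti}, and your three steps (the $45^\circ$ rotation to rotated coordinates, the iterated peeling of long strictly decreasing subsequences followed by the Dilworth/fiber decomposition of the remainder, and the piecewise-linear interpolation back to $1$-Lipschitz graphs) reproduce that proof faithfully. Your handling of the tie-breaking via lexicographic order in $(u,v)$ and the verification that the relevant coordinate is strictly monotone along each extracted subsequence are the only places requiring care, and you have addressed both correctly.
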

\begin{remark}\label{remark.no.generalization.higher.dim}
The previous lemma is no longer true in higher dimensions (see Question 8.2 in
\cite{Alberti}).
\end{remark}

Lemma \ref{CombinatorialResult} has the following consequence, as observed by Alberti, Cs\"ornyei, and Preiss \cite{Alberti}.
\begin{theorem}[Alberti, Cs\"ornyei, Preiss \cite{Alberti}]
\label{CoveringOfCompactSet}
Let $K\subset [0,1]^2$ be a compact set and let $\epsilon>0$. For any $\delta>0$ small enough (depending on $\epsilon$ and $K$), there exist $N$ functions $f_i:[0,1]\rightarrow [0,1]$ as well as $M$ functions $g_j:[0,1]\rightarrow [0,1]$ such that the following holds true:
\begin{itemize}
\item We have the estimates $\delta N\leq \sqrt{|K|}+\epsilon$ and $\delta M\leq \sqrt{|K|}+\epsilon$.
\item The functions $f_i$ and $g_j$ are Lipschitz continuous with Lipschitz constant no larger than $1$.
\item Considering the horizontal strips $H_i:=\{(x,y):x\in [0,1],|y-f_i(x)|\leq \delta\}$ and the vertical strips $V_j:=\{(x,y):y\in [0,1],|x-g_j(y)|\leq \delta\}$, the set $K$ is covered by these strips, i.e.\ we have
\begin{align*}
K\subset \bigcup_{i=1}^N H_i \cup \bigcup_{j=1}^M V_j.
\end{align*}
\end{itemize}
\end{theorem}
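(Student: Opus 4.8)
The argument is essentially that of Alberti, Cs\"ornyei, and Preiss~\cite{Alberti}: discretize $K$ by a fine axis-parallel grid, apply the combinatorial Lemma~\ref{CombinatorialResult} to the centers of the grid cells that meet $K$, and then fatten the resulting $1$-Lipschitz graphs into strips of half-width $\delta$.

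First I would set up the discretization. Given $\epsilon>0$, pick $\epsilon'>0$ so small that $\sqrt{|K|+\epsilon'}\leq\sqrt{|K|}+\epsilon$; this is possible by continuity of $t\mapsto\sqrt t$ and depends only on $\epsilon$ and $K$. For $\delta>0$ let $Q_\delta$ be the union of all closed squares of side $\delta$ from the grid $\delta\mathbb{Z}^2$ that intersect $K$, and let $S_\delta$ be the finite set of their centers. Since $K$ is compact, $K\subset Q_\delta\subset\{x:\operatorname{dist}(x,K)\leq\sqrt2\,\delta\}$, and the sets on the right decrease to $K$ as $\delta\downarrow0$; by continuity from above of the Lebesgue measure, $|Q_\delta|\to|K|$, so there is $\delta_0=\delta_0(\epsilon,K)>0$ with $|Q_\delta|\leq|K|+\epsilon'$ for all $0<\delta<\delta_0$. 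As the grid squares are pairwise non-overlapping, $\sharp S_\delta=|Q_\delta|/\delta^2$.

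Next I would invoke Lemma~\ref{CombinatorialResult} for the finite set $S_\delta$, obtaining $1$-Lipschitz functions $f_1,\dots,f_N$ and $g_1,\dots,g_M$ on $\mathbb{R}$, with $N,M\leq\sqrt{\sharp S_\delta}$, whose graphs (those of $f_i$ as functions of $x$, those of $g_j$ as functions of $y$) cover $S_\delta$. Then $\delta N\leq\delta\sqrt{\sharp S_\delta}=\sqrt{|Q_\delta|}\leq\sqrt{|K|+\epsilon'}\leq\sqrt{|K|}+\epsilon$, and likewise $\delta M\leq\sqrt{|K|}+\epsilon$. Replacing each $f_i$ by $\max\{0,\min\{1,f_i\}\}$ (and similarly each $g_j$) keeps the functions $1$-Lipschitz and sends $[0,1]$ into $[0,1]$, and I claim this truncation does not spoil the covering. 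Indeed, let $p=(x,y)\in K$ and let $Q$ be a grid square of $Q_\delta$ containing $p$, with center $c=(x_0,y_0)\in S_\delta$, so $|x-x_0|\leq\delta/2$ and $|y-y_0|\leq\delta/2$. By the lemma $c$ lies on a graph, say $y_0=f_i(x_0)$ (the case $x_0=g_j(y_0)$ is symmetric). Using that $f_i$ is $1$-Lipschitz, $|y-f_i(x)|\leq|y-y_0|+|f_i(x_0)-f_i(x)|\leq\delta/2+|x_0-x|\leq\delta$; moreover, since $y\in[0,1]$, if $f_i(x)>1$ the truncated value $1$ lies between $y$ and $f_i(x)$, so clamping only decreases the distance to $y$ (and symmetrically if $f_i(x)<0$). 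Hence $p$ lies in the strip $H_i$ built from the truncated $f_i$, which yields $K\subset\bigcup_iH_i\cup\bigcup_jV_j$.

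I do not anticipate a genuine obstacle here: once Lemma~\ref{CombinatorialResult} is in hand, the only points needing care are the bookkeeping that ties the grid mesh $\delta$ to the strip half-width $\delta$ and to the count $\sharp S_\delta=|Q_\delta|/\delta^2$ (so that $\delta N$, rather than $2\delta N$, is the controlled quantity), the elementary measure-theoretic limit $|Q_\delta|\to|K|$, and the harmless truncation forcing the range into $[0,1]$.
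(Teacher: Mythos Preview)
Your proof is correct and follows essentially the same approach as the paper's: discretize $K$ by a grid of mesh $\delta$, apply Lemma~\ref{CombinatorialResult} to the centers of the grid cells meeting $K$, truncate the resulting functions to $[0,1]$, and observe that the $1$-Lipschitz property forces each $\delta$-strip to contain the full grid cell whose center it passes through. The only cosmetic differences are that the paper restricts to dyadic $\delta=2^{-l}$ (so the squares tile $[0,1]^2$ exactly) and uses $\epsilon^2$ directly in place of your auxiliary $\epsilon'$; your treatment of the truncation step is in fact more explicit than the paper's.
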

For the reader's convenience, we also state the proof of the result.
\begin{proof} \textit{Step 1.}
Fix $\epsilon>0$. We set $\delta:=2^{-l}$ and subdivide the unit square $[0,1]^2$ into $2^l \times 2^l$ squares. By compactness of $K$, for $l\in \mathbb{N}$ large enough the total area of the subsquares that have nonempty intersection with $K$ is bounded by $|K|+\epsilon^2$ (one easily sees that by monotone convergence of the sequence $\chi_{K_l}$ towards $\chi_K$, where $K_l$ denotes the set containing all subsquares of size $2^{-l} \times 2^{-l}$ that have nonempty intersection with $K$).\smallskip

\textit{Step 2.}
Applying Lemma \ref{CombinatorialResult} to the set of centers of the subsquares contained in $K_l$ -- note that by the previous considerations, the number of such squares is bounded by $2^{2l}(|K|+\epsilon^2)$ -- , we obtain $N$ functions $f_i:\mathbb{R}\rightarrow \mathbb{R}$ and $M$ functions $g_j:\mathbb{R}\rightarrow \mathbb{R}$ whose graphs cover the centers of our squares. Furthermore, we have
$N\leq \sqrt{2^{2l}(|K|+\epsilon^2)}$ and $M\leq \sqrt{2^{2l}(|K|+\epsilon^2)}$, which implies the desired estimates $\delta N\leq \sqrt{|K|}+\epsilon$ and $\delta M\leq \sqrt{|K|}+\epsilon$. We restrict the functions $f_i$ and $g_j$ to the interval $[0,1]$ and replace them by $\min\{\max\{f_i,0\},1\}$ and $\min\{\max\{g_j,0\},1\}$. This obviously preserves the $1$-Lipschitz property; furthermore, the centers of the subsquares are still covered by the graphs of the modified $f_i$ and $g_j$.\smallskip

\textit{Step 3.}
It is easy to see that the union of the strips associated with the (modified) $f_i$ and $g_j$ provides a covering of $K$: A strip $H_i$ must cover a full subsquare whenever the graph $f_i$ covers the center of the subsquare, as the $f_i$ are $1$-Lipschitz functions. For the $V_j$ and $g_j$, the analogous assertion holds.
\end{proof}

We now provide a slightly stronger version of Lemma \ref{CoveringOfCompactSet} which states that
\begin{itemize}
\item one may actually choose the interior of the horizontal strips $H_i$ to be mutually disjoint and similarly for the vertical strips $V_j$, and
\item the strips $H_i$ and $V_j$ can be chosen to be contained in the unit square.
\end{itemize}
This fact has first been observed by Marchese \cite{Marchese} and later, independently, by the authors of the present paper in an earlier version of the paper.
\begin{lemma}
\label{IntersectionFreeCovering}
The following slightly strengthened version of Lemma \ref{CoveringOfCompactSet} holds: We may additionally enforce in Lemma \ref{CoveringOfCompactSet} that we have $f_{i+1}\leq f_i-2\delta$ and $g_{j+1}\leq g_j - 2\delta$ or, equivalently,
\begin{align*}
\sum_{i=1}^N \chi_{\{(x,y):x\in [0,1],|y-f_i(x)|< \delta\}}\leq 1
\quad\text{and}\quad
\sum_{j=1}^M \chi_{ \{(x,y):y\in [0,1],|x-g_j(y)|< \delta\}}\leq 1.
\end{align*}
 Furthermore, we may enforce $\delta\leq f_i\leq 1-\delta$ and $\delta\leq g_j\leq 1-\delta$, or, equivalently,
 $$\bigcup_{i=1}^N H_i \cup \bigcup_{j=1}^M V_j\subset [0,1]^2.$$
\end{lemma}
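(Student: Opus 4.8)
The plan is to revisit the proof of Theorem~\ref{CoveringOfCompactSet}, strengthening its combinatorial core and adding a boundary clip. So we again fix $\delta=2^{-l}$, subdivide $[0,1]^2$ into $\delta\times\delta$ cells, let $P$ be the set of centers of the cells meeting $K$ (so that $\sharp P\le 2^{2l}(|K|+\epsilon^2)$ once $l$ is large), and keep in mind that the $\delta$-strip around a $1$-Lipschitz graph through a cell center covers that entire cell. Hence it suffices to cover the union $K_l$ of cells meeting $K$ (a closed set with $K\subseteq K_l$ and $|K_l|\to|K|$), and then to pass back to $K$ by choosing $l$ so large that $\sqrt{|K_l|}\le\sqrt{|K|}+\epsilon$, exactly as in Theorem~\ref{CoveringOfCompactSet}. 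Relative to that theorem there are two new requirements: (a) the horizontal strips should have pairwise disjoint interiors, equivalently $f_{i+1}\le f_i-2\delta$, and likewise for the vertical strips; and (b) all strips should lie in $[0,1]^2$, equivalently $\delta\le f_i\le 1-\delta$ and $\delta\le g_j\le 1-\delta$.

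Requirement (b) is the easy one. Given any covering of $K$ by $1$-Lipschitz strips, replace each $f_i$ by $\min\{\max\{f_i,\delta\},1-\delta\}$ and each $g_j$ analogously: this keeps the functions $1$-Lipschitz, forces their range into $[\delta,1-\delta]$, and does not destroy the covering, since if $(x,y)\in K\subseteq[0,1]^2$ satisfies $|y-f_i(x)|\le\delta$ then it still satisfies the same inequality for the clipped function (the clip displaces $f_i(x)$ by at most $\delta$, and only towards $y$). The one subtlety is that a naive clip can shrink a gap $f_i-f_{i+1}$ below $2\delta$ inside the boundary layer; one avoids this by carrying out the separation construction of step~(a) from the outset inside the slab of admissible heights $[\delta,1-\delta]$ — note that a horizontal strip forced to cover a cell of the bottom row $[0,1]\times[0,\delta]$ while staying inside $[0,1]^2$ must have $f_i\equiv\delta$ on the relevant $x$-interval anyway, and symmetrically at the top — so that (b) is automatic.

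The real work is requirement (a), and here one needs a strengthening of the Erd\H{o}s-Szekeres / Dilworth mechanism behind Lemma~\ref{CombinatorialResult}. Rather than decomposing $P$ into arbitrary chains and antichains for the cone order $p\preceq q$ (meaning $q_1>p_1$ and $|q_2-p_2|\le q_1-p_1$), one produces a decomposition of $P$ into $N\le\sqrt{\sharp P}$ chains that can be ordered $C_1,\dots,C_N$ so that the $1$-Lipschitz staircases $f_1,\dots,f_N$ obtained by interpolating along them (and extending suitably) satisfy $f_{i+1}\le f_i-2\delta$ throughout $[0,1]$, together with the symmetric statement for $M\le\sqrt{\sharp P}$ antichains and the $g_j$. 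The natural route is a ``layered'' cover: peel off a bottom-most chain repeatedly, or use a height function on the poset to obtain genuinely nested chains which, because all points of $P$ lie on the lattice of cell centers of spacing $\delta$, come out separated — one exploits that an interpolating staircase routed with slopes in $\{-1,0,1\}$ takes lattice values at every column, and that a single $\delta$-strip may legitimately absorb the cell centers of two adjacent rows, which is exactly the slack that keeps the number of strips at $\sqrt{\sharp P}$.

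The main obstacle is this last point. A crude ``uncrossing'' of the chains produced by Lemma~\ref{CombinatorialResult} — replacing a crossing pair by its pointwise maximum and minimum — makes the staircases non-crossing but only forces $f_{i+1}\le f_i$, not $f_{i+1}\le f_i-2\delta$; on the other hand, naively re-partitioning into genuinely $2\delta$-separated horizontal bands inflates the strip count by a factor of order $\delta^{-1}$, which the bound $\delta N\le\sqrt{|K|}+\epsilon$ (inherited from Theorem~\ref{CoveringOfCompactSet}) cannot absorb. The content of the observation of Marchese~\cite{Marchese} is that one can thread between these two failures and obtain a $2\delta$-separated cover with no loss in the number of strips; making this precise — in particular, controlling the separation of the interpolated staircases at columns where only one of two consecutive chains has a node, and combining it correctly with the clip of step~(b) — is where the care is needed. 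With the strengthened combinatorial cover in hand, the passage back from $K_l$ to $K$ is verbatim as in the proof of Theorem~\ref{CoveringOfCompactSet}, and yields all the asserted conclusions, including $\delta N\le\sqrt{|K|}+\epsilon$, $\delta M\le\sqrt{|K|}+\epsilon$, the disjointness of interiors, and the inclusion $\bigcup_i H_i\cup\bigcup_j V_j\subseteq[0,1]^2$.
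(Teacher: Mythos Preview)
Your proposal is not a proof but a plan, and the plan has a gap at exactly the point you yourself flag. You reduce everything to a ``strengthened combinatorial cover'' giving chains whose interpolating staircases are $2\delta$-separated while keeping the count at $\sqrt{\sharp P}$, then write ``With the strengthened combinatorial cover in hand\ldots''. But you never construct this cover; you only describe why naive uncrossing and naive banding both fail, and then invoke Marchese's observation as a black box. That is the whole lemma.

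The paper's proof is both complete and structurally different from your plan. It does \emph{not} revisit the combinatorics of Lemma~\ref{CombinatorialResult} at all. Instead it takes the functions $\tilde f_1,\ldots,\tilde f_N$ produced by Theorem~\ref{CoveringOfCompactSet} as given and post-processes them pointwise. For each fixed $x$ it first reorders the values by setting $\overline f_i(x):=\max_i\{\tilde f_1(x),\ldots,\tilde f_N(x)\}$ (the $i$-th largest); then it forces a lower staircase by $\hat f_i(x):=\max\{\overline f_i(x),(1+2(N-i))\delta\}$; finally it enforces the separation and the upper cap recursively via $f_1(x):=\min\{\hat f_1(x),1-\delta\}$ and $f_i(x):=\min\{\hat f_i(x),f_{i-1}(x)-2\delta\}$. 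The properties $f_{i+1}\le f_i-2\delta$ and $\delta\le f_i\le 1-\delta$ are then immediate from the formula, Lipschitzness is inherited under $\min$ and $\max$, and the only nontrivial check is that for every $x$ the one-dimensional set $[0,1]\cap\bigcup_i[\tilde f_i(x)-\delta,\tilde f_i(x)+\delta]$ is still contained in $\bigcup_i[f_i(x)-\delta,f_i(x)+\delta]$. This is verified in three short substeps (one for each of the modifications $\overline f_i$, $\hat f_i$, $f_i$), each a direct interval-chasing argument. In particular there is no need to control ``separation of interpolated staircases at columns where only one of two consecutive chains has a node'', because the construction is purely pointwise in $x$ and never refers to the underlying chain/antichain structure.

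So: your approach tries to redo the Erd\H{o}s--Szekeres step with extra constraints, which is delicate and which you do not carry out; the paper instead applies an explicit $\min/\max$ squeeze to the \emph{output} of Theorem~\ref{CoveringOfCompactSet}, reducing the problem to a one-dimensional covering check at each $x$.
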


\begin{proof}\textit{Step 1.}
Let $\tilde f_i$, $\tilde g_j$ be the family of functions obtained by applying Lemma~\ref{CoveringOfCompactSet}. Then define $f_1$ as
\begin{align*}
f_1(x):=\min\big\{\max\{\tilde f_1(x),\ldots,\tilde f_N(x),(1+2(N-1))\delta\},1-\delta\big\}.
\end{align*}
Inductively, define for $2\leq i\leq N$
\begin{align*}
f_{i}(x):=\min\big\{\max\big\{{\max}_{i}\{\tilde f_1(x),\ldots,\tilde f_N(x)\},(1+2(N-i))\delta\big\},f_{i-1}-2\delta\big\}
\end{align*}
where $\max_{i}$ denotes the $i$-th largest number of the set (i.e. in particular\ $\max_1 S = \max S$ and, e.\,g.\
$\max_2\{1,3,3\}=3$).  We define the $g_j$ analogously.
We claim that the $f_i$ and $g_j$ have all the wished properties. This will be shown in the remaining
three steps.
\smallskip

\textit{Step 2.} First we trivially have that
$$f_{i+1}\leq f_i-2\delta, \quad \text{and}\quad f_i\leq 1-\delta$$ and similarly for the $g_j.$ Moreover a direct induction shows that $f_i\geq (1+2(N-i))\delta$ which in particular implies that
$$f_i\geq \delta$$ and similarly for $g_j.$
The Lipschitz estimate on the $\tilde f_i$ from Lemma \ref{CoveringOfCompactSet} easily carries over to the $f_i$ (same for $g_j$). It only remains to prove (see Step 3) the covering property of the modified strips, i.e.
\begin{align}\label{K.covered}
K\subset \bigcup_{i=1}^N \{(x,y):x\in [0,1],|y-f_i(x)|\leq \delta\} \cup \bigcup_{j=1}^M \{(x,y):y\in [0,1],|x-g_j(y)|\leq \delta\}.
\end{align}

\textit{Step 3.}
Since by construction (\ref{K.covered}) is fulfilled with $f_i$ and $g_j$ replaced by $\tilde f_i$ and $\tilde g_j$
and since $K\subset [0,1]^2$ it is enough to show that, for every $x\in [0,1],$
\begin{align}
\label{strips.still.covering.K}
[0,1]\cap \bigcup_{i=1}^N
[\tilde f_i(x)-\delta,\tilde f_i(x)+\delta]
\subset\bigcup_{i=1}^N
[f_i-\delta,f_i+\delta]
\end{align}
and similarly for $\tilde g_j$ and $g_j$ to show (\ref{strips.still.covering.K}).
We only prove the assertion for the $f_i$ (the other one being the same) and proceed in three substeps.\smallskip

\textit{Step 3.1.}
Defining the family of functions $\overline f_i$ by
\begin{align*}
\overline f_{i}(x):={\max}_{i}\{\tilde f_1(x),\ldots,\tilde f_N(x)\big\},
\end{align*} we claim that
\begin{align*}\bigcup_{i=1}^N
[\tilde f_i(x)-\delta,\tilde f_i(x)+\delta]
=\bigcup_{i=1}^N
[\overline f_i(x)-\delta,\overline f_i(x)+\delta].
\end{align*}
Indeed this directly follows since the tuple $(\overline f_{1}(x),\cdots,\overline f_{N}(x))$ is just a reordering of the tuple
$(\tilde f_{1}(x),\cdots,\tilde f_{N}(x))$.\smallskip

\textit{Step 3.2.} Define another family $\hat f_i$ by
\begin{align*}
\hat f_{i}(x):=\max\big\{\overline f_{i}(x),(1+2(N-i))\delta\big\}.
\end{align*}
We claim that
\begin{align*}
[0,1]\cap\bigcup_{i=1}^N
[\overline f_i(x)-\delta,\overline f_i(x)+\delta]\subset
\bigcup_{i=1}^N
[\hat f_{i}(x)-\delta,\hat f_{i}(x)+\delta].
\end{align*}
Assume that, for some $i$, $\hat f_i(x)\neq \overline f_i(x)$ (otherwise the claim is trivial), and thus, by definition,
\begin{align*}\hat f_i(x)=(1+2(N-i))\delta> \overline f_i(x).
\end{align*}
If $\hat f_{i+1}(x)\leq \overline f_i(x) $ we deduce, by definition, that
\begin{align*}(1+2(N-i-1))\delta\leq \hat f_{i+1}(x)\leq \overline f_i(x)<\hat f_i(x)=(1+2(N-i))\delta
\end{align*}
which directly implies that
\begin{align*}
[\overline f_i(x)-\delta,\overline f_i(x)+\delta]\subset[\hat f_{i}(x)-\delta,\hat f_{i}(x)+\delta]\cup [\hat f_{i+1}(x)-\delta,\hat f_{i+1}(x)+\delta]
\end{align*}
and we are done.
If $\hat f_{i+1}(x)> \overline f_i(x)$ we deduce, by definition of $\hat f_{i+1}(x)$ and
since $\overline f_i(x)\geq \overline f_{i+1}(x)$, that
\begin{align*}\hat f_{i+1}(x)=(1+2(N-i-1))\delta.
\end{align*}
Hence, proceeding by induction we end up with one of the two following cases:
\begin{itemize}
\item there exists $i<l<N$ such that
\begin{align*}(1+2(N-l-1))\delta\leq \hat f_{l+1}(x)\leq \overline f_{i}(x)\leq \hat f_{l}(x)=(1+2(N-l))\delta,
\end{align*}
and we are done as before.
\item If such a $l$ does not exist we have $\delta=\hat f_{N}(x)>\overline f_{i}(x)$ in which case we trivially have
\begin{align*}
[0,1]\cap[\overline f_i-\delta,\overline f_i+\delta]\subset[\hat f_{N}(x)-\delta,\hat f_{N}(x)+\delta],
\end{align*}
showing the claim.
\end{itemize}
\textit{Step 3.3.} Recalling that the family $f_i$ is defined as \begin{align*}
f_1(x)=\min\big\{\hat f_{1}(x),1-\delta\big\}.
\end{align*}
and, for $2\leq i\leq N,$
\begin{align*}
f_{i}(x)=\min\big\{\hat f_{i}(x),f_{i-1}-2\delta\big\},
\end{align*}
it is enough, in view of Steps 3.1 and 3.2, to prove that
\begin{align*} [0,1]\cap
\bigcup_{i=1}^N[\hat f_{i}(x)-\delta,\hat f_{i}(x)+\delta]\subset
\bigcup_{i=1}^N[f_{i}(x)-\delta,f_{i}(x)+\delta],
\end{align*}
in order to show (\ref{strips.still.covering.K}) and prove the lemma.\smallskip

First we trivially have
\begin{align*}
[0,1]\cap
[\hat f_{1}(x)-\delta,\hat f_{1}(x)+\delta]\subset
[f_{1}(x)-\delta,f_{1}(x)+\delta]
\end{align*}
Finally, suppose that for some $i\geq 2$ we have $f_i(x)\neq \hat f_{i}(x)$ (otherwise the claim is trivial) and thus, by definition,
\begin{align*}
\hat f_{i}(x)> f_{i}(x)=f_{i-1}-2\delta.
\end{align*}
If $\hat f_{i}(x)\in [f_{i}(x),f_{i-1}(x)]$ then the previous equality directly implies that
\begin{align*}[\hat f_{i}(x)-\delta,\hat f_{i}(x)+\delta]\subset
[f_{i}(x)-\delta,f_{i}(x)+\delta]\cup [f_{i-1}(x)-\delta,f_{i-1}(x)+\delta]
\end{align*}
and we are done.
If $\hat f_{i}(x)>f_{i-1}(x)=\min\{\hat f_{i-1}(x),f_{i-2}-2\delta\}$ we deduce that,
since $\hat f_{i-1}(x)\geq \hat f_{i}(x)$,
\begin{align*} f_{i-1}(x)=f_{i-2}-2\delta.
\end{align*}
Hence proceeding by induction we deduce that one of the cases below occur:
\begin{itemize}
\item there exists some $i>l\geq 2$ such that $\hat f_{i}(x)\in [f_{l}(x),f_{l-1}(x)]$ and
$f_{l}(x)=f_{l-1}(x)-2\delta$ and we done as above.
\item If such a $l$ does not exist we have $\hat f_{i}(x)>f_1(x)=1-\delta$ in which case we trivially have
\begin{align*}
[0,1]\cap
[\hat f_{i}(x)-\delta,\hat f_{i}(x)+\delta]\subset
[f_{1}(x)-\delta,f_{1}(x)+\delta],
\end{align*}
proving the claim.
\end{itemize}

\end{proof}

\subsection{The simplified model case}\label{subsection.stretching}

We now prove Proposition \ref{Prop.Bi.Lip.simpl} giving an explicit formula for the stretching map $\phi.$
\begin{figure}
\begin{floatrow}
\begin{tikzpicture}[scale=0.55]
\draw (-1.5,-1.5) -- (-1.5,8.5) -- (8.5,8.5) -- (8.5,-1.5) -- cycle;
\foreach \x in {-0.5,0.5,1.5,2.5,3.5,4.5,5.5,6.5,7.5}
  \draw[lightgray!70!white] (\x,-1.5) -- (\x,8.5);
\foreach \y in {-0.5,0.5,1.5,2.5,3.5,4.5,5.5,6.5,7.5}
  \draw[lightgray!70!white] (-1.5,\y) -- (8.5,\y);
\foreach \x in {-1,0,1,2,3,4,5,6,7,8}
  \foreach \y in {-1,0,1,2,3,4,5,6,7,8}
    \draw[fill,lightgray] (\x,\y) circle (0.04cm);
\draw[rotate around={-20:(0.1,6.5)},fill,gray,opacity=0.2] (0.1,6.5) ellipse
(0.15cm and 0.2cm);
\draw[rotate around={-30:(2.1,6.73)},fill,gray,opacity=0.2] (2.1,6.73) ellipse
(0.25cm and 0.15cm);
\draw[fill,gray,opacity=0.2] (-0.3,3.8) .. controls (-0.2,3.8) and (0.2,4.1) ..
(0.3,4.3) .. controls (0.4,4.5) and (1.2,5.0) .. (1.2,5.2)
.. controls (1.2,5.4) and (0.3,5.5) .. (0.1,5.6) .. controls (-0.1,5.7) and
(-0.2,4.7) .. (-0.2,4.5) -- cycle;
\draw[rotate around={80:(3.1,0.2)},fill,gray,opacity=0.2] (3.1,0.2) ellipse
(0.25cm and 0.2cm);
\draw[rotate around={-20:(1.9,1.1)},fill,gray,opacity=0.2] (1.9,1.1) ellipse
(0.35cm and 0.37cm);
\draw[rotate around={81:(2.27,0.73)},fill,gray,opacity=0.2] (2.27,0.73) ellipse
(0.12cm and 0.1cm);
\draw[rotate around={-10:(5,2.4)},fill,gray,opacity=0.2] (5,2.4) ellipse
(0.15cm and 0.5cm);
\draw[rotate around={51:(5.1,1.7)},fill,gray,opacity=0.2] (5.1,1.7) ellipse
(0.15cm and 0.1cm);
\draw[rotate around={-10:(7.2,1.35)},fill,gray,opacity=0.2] (7.2,1.35) ellipse
(0.15cm and 0.14cm);
\draw[rotate around={-70:(6.8,4.87)},fill,gray,opacity=0.2] (6.8,4.87) ellipse
(0.1cm and 0.2cm);
\draw[rotate around={40:(5.75,3.75)},fill,gray,opacity=0.2] (5.75,3.75) ellipse
(0.2cm and 0.15cm);
\draw[rotate around={-20:(7.3,7.2)},fill,gray,opacity=0.2] (7.2,7.1) ellipse
(0.22cm and 0.25cm);
\draw[rotate around={-20:(6.05,6.1)},fill,gray,opacity=0.2] (6.05,6.1) ellipse
(0.34cm and 0.3cm);
\draw[rotate around={-30:(5.7,5.8)},fill,gray,opacity=0.2] (5.7,5.8) ellipse
(0.12cm and 0.1cm);
\draw[rotate around={33:(3.9,0.9)},fill,gray,opacity=0.2] (3.9,0.9) ellipse
(0.15cm and 0.2cm);
\draw[rotate around={87:(3.9,6.1)},fill,gray,opacity=0.2] (3.9,6.1) ellipse
(0.14cm and 0.25cm);
\draw[fill,black] (0,6) circle (0.07cm);
\draw[fill,black] (2,7) circle (0.07cm);
\draw[fill,black] (7,7) circle (0.07cm);
\draw[fill,black] (4,6) circle (0.07cm);
\draw[fill,black] (0,4) circle (0.07cm);
\draw[fill,black] (0,5) circle (0.07cm);
\draw[fill,black] (6,4) circle (0.07cm);
\draw[fill,black] (6,6) circle (0.07cm);
\draw[fill,black] (7,5) circle (0.07cm);
\draw[fill,black] (1,5) circle (0.07cm);
\draw[fill,black] (7,1) circle (0.07cm);
\draw[fill,black] (4,1) circle (0.07cm);
\draw[fill,black] (5,3) circle (0.07cm);
\draw[fill,black] (5,2) circle (0.07cm);
\draw[fill,black] (2,1) circle (0.07cm);
\draw[fill,black] (0,7) circle (0.07cm);
\draw[fill,black] (3,0) circle (0.07cm);
\end{tikzpicture}
~~~~~
\begin{tikzpicture}[scale=0.55]
\foreach \x in {-0.5,0.5,1.5,2.5,3.5,4.5,5.5,6.5,7.5}
  \draw[lightgray!70!white] (\x,-1.5) -- (\x,8.5);
\foreach \y in {-0.5,0.5,1.5,2.5,3.5,4.5,5.5,6.5,7.5}
  \draw[lightgray!70!white] (-1.5,\y) -- (8.5,\y);
\draw[rotate around={-20:(0.1,6.5)},fill,gray,opacity=0.2] (0.1,6.5) ellipse
(0.15cm and 0.2cm);
\draw[rotate around={-30:(2.1,6.73)},fill,gray,opacity=0.2] (2.1,6.73) ellipse
(0.25cm and 0.15cm);
\draw[fill,gray,opacity=0.2] (-0.3,3.8) .. controls (-0.2,3.8) and (0.2,4.1) ..
(0.3,4.3) .. controls (0.4,4.5) and (1.2,5.0) .. (1.2,5.2)
.. controls (1.2,5.4) and (0.3,5.5) .. (0.1,5.6) .. controls (-0.1,5.7) and
(-0.2,4.7) .. (-0.2,4.5) -- cycle;
\draw[rotate around={80:(3.1,0.2)},fill,gray,opacity=0.2] (3.1,0.2) ellipse
(0.25cm and 0.2cm);
\draw[rotate around={-20:(1.9,1.1)},fill,gray,opacity=0.2] (1.9,1.1) ellipse
(0.35cm and 0.37cm);
\draw[rotate around={81:(2.27,0.73)},fill,gray,opacity=0.2] (2.27,0.73) ellipse
(0.12cm and 0.1cm);
\draw[rotate around={-10:(5,2.4)},fill,gray,opacity=0.2] (5,2.4) ellipse
(0.15cm and 0.5cm);
\draw[rotate around={51:(5.1,1.7)},fill,gray,opacity=0.2] (5.1,1.7) ellipse
(0.15cm and 0.1cm);
\draw[rotate around={-10:(7.2,1.35)},fill,gray,opacity=0.2] (7.2,1.35) ellipse
(0.15cm and 0.14cm);
\draw[rotate around={-70:(6.8,4.87)},fill,gray,opacity=0.2] (6.8,4.87) ellipse
(0.1cm and 0.2cm);
\draw[rotate around={40:(5.75,3.75)},fill,gray,opacity=0.2] (5.75,3.75) ellipse
(0.2cm and 0.15cm);
\draw[rotate around={-20:(7.3,7.2)},fill,gray,opacity=0.2] (7.2,7.1) ellipse
(0.22cm and 0.25cm);
\draw[rotate around={-20:(6.05,6.1)},fill,gray,opacity=0.2] (6.05,6.1) ellipse
(0.34cm and 0.3cm);
\draw[rotate around={-30:(5.7,5.8)},fill,gray,opacity=0.2] (5.7,5.8) ellipse
(0.12cm and 0.1cm);
\draw[rotate around={33:(3.9,0.9)},fill,gray,opacity=0.2] (3.9,0.9) ellipse
(0.15cm and 0.2cm);
\draw[rotate around={87:(3.9,6.1)},fill,gray,opacity=0.2] (3.9,6.1) ellipse
(0.14cm and 0.25cm);
\draw (-1.5,-1.5) -- (-1.5,8.5) -- (8.5,8.5) -- (8.5,-1.5) -- cycle;
\foreach \x in {-1,0,1,2,3,4,5,6,7,8}
  \foreach \y in {-1,0,1,2,3,4,5,6,7,8}
    \draw[fill,lightgray] (\x,\y) circle (0.04cm);
\draw[very thick,draw=red!80!black] (5,8.5) -- (5,3) -- (5,2) -- (5,1) --
(5,-1.5);
\draw[very thick,draw=red!80!black] (7,8.5) -- (7,7) -- (6,6) -- (7,3) -- (7,2) -- (7,1) --
(7,-1.5);
\draw[very thick,draw=red!80!black] (0,8.5) -- (0,6) -- (1,5) -- (2,1) -- (2,-1.5);
\draw[very thick,draw=blue!80!black] (-1.5,3.5) -- (0,5) -- (0,5) -- (2,7) -- (4,6) -- (6,5) -- (7,5) -- (8.5,5);
\draw[very thick,draw=blue!80!black] (-1.5,4) -- (0,4) -- (6,4) -- (8.5,4);
\draw[very thick,draw=blue!80!black] (-1.5,0) -- (3,0) -- (4,1) -- (8.5,1);
\draw[fill,black] (0,6) circle (0.07cm);
\draw[fill,black] (2,7) circle (0.07cm);
\draw[fill,black] (7,7) circle (0.07cm);
\draw[fill,black] (4,6) circle (0.07cm);
\draw[fill,black] (0,4) circle (0.07cm);
\draw[fill,black] (0,5) circle (0.07cm);
\draw[fill,black] (6,4) circle (0.07cm);
\draw[fill,black] (6,6) circle (0.07cm);
\draw[fill,black] (7,5) circle (0.07cm);
\draw[fill,black] (1,5) circle (0.07cm);
\draw[fill,black] (7,1) circle (0.07cm);
\draw[fill,black] (4,1) circle (0.07cm);
\draw[fill,black] (5,3) circle (0.07cm);
\draw[fill,black] (5,2) circle (0.07cm);
\draw[fill,black] (2,1) circle (0.07cm);
\draw[fill,black] (0,7) circle (0.07cm);
\draw[fill,black] (3,0) circle (0.07cm);
\end{tikzpicture}
\end{floatrow}
~\newline
\begin{floatrow}
\begin{tikzpicture}[scale=0.65]
\draw[rotate around={-20:(0.1,6.5)},fill,gray,opacity=0.3] (0.1,6.5) ellipse
(0.15cm and 0.2cm);
\draw[rotate around={-30:(2.1,6.73)},fill,gray,opacity=0.3] (2.1,6.73) ellipse
(0.12cm and 0.1cm);
\draw[rotate around={25:(2.2,7.03)},fill,gray,opacity=0.3] (2.2,7.03) ellipse
(0.12cm and 0.1cm);
\draw[fill,gray,opacity=0.3] (-0.3,4.05) .. controls (-0.2,4.05) and (0.2,4.1) ..
(0.3,4.3) .. controls (0.4,4.5) and (1.2,5.0) .. (1.2,5.2)
.. controls (1.2,5.4) and (0.3,5.5) .. (0.1,5.6) .. controls (-0.1,5.7) and
(-0.2,4.7) .. (-0.2,4.5) -- cycle;
\draw[rotate around={80:(3.1,0.0)},fill,gray,opacity=0.3] (3.1,0.0) ellipse
(0.25cm and 0.2cm);
\draw[rotate around={-20:(1.9,1.1)},fill,gray,opacity=0.3] (1.9,1.1) ellipse
(0.35cm and 0.37cm);
\draw[rotate around={81:(2.27,0.5)},fill,gray,opacity=0.3] (2.27,0.5) ellipse
(0.12cm and 0.1cm);
\draw[rotate around={-10:(5,2.4)},fill,gray,opacity=0.3] (5,2.4) ellipse
(0.15cm and 0.5cm);
\draw[rotate around={51:(5.1,1.7)},fill,gray,opacity=0.3] (5.1,1.7) ellipse
(0.15cm and 0.1cm);
\draw[rotate around={-10:(7.2,1.35)},fill,gray,opacity=0.3] (7.2,1.35) ellipse
(0.15cm and 0.14cm);
\draw[rotate around={-70:(6.8,4.87)},fill,gray,opacity=0.3] (6.8,4.87) ellipse
(0.1cm and 0.2cm);
\draw[rotate around={-20:(7.3,7.2)},fill,gray,opacity=0.3] (7.2,7.1) ellipse
(0.22cm and 0.25cm);
\draw[rotate around={-20:(6.05,6.1)},fill,gray,opacity=0.3] (6.05,6.1) ellipse
(0.34cm and 0.3cm);
\draw[rotate around={-30:(5.7,5.8)},fill,gray,opacity=0.3] (5.7,5.8) ellipse
(0.12cm and 0.1cm);
\draw[rotate around={33:(3.9,0.9)},fill,gray,opacity=0.3] (3.9,0.9) ellipse
(0.15cm and 0.2cm);
\draw[rotate around={87:(3.9,6.1)},fill,gray,opacity=0.3] (3.9,6.1) ellipse
(0.14cm and 0.25cm);
\draw[fill,green,opacity=0.2] (-0.5,7) -- (0,7) -- (1,7) -- (1.5,7.5)
-- (5.5,7.5) -- (6,7) -- (6.5,7.5) -- (7.5,7.5)
-- (7.5,6.5) -- (6.5,6.5) -- (6,6) -- (5.5,6.5) -- (1.5,6.5) -- (1,6)
-- (0,6) -- (-0.5,6);
\draw[fill,blue,opacity=0.2] (-0.5,6) -- (0,6) -- (3,6) -- (3.5,6.5)
-- (5.5,6.5) -- (6,6) -- (6.5,6.5) -- (7,6) -- (7.5,6)
--(7.5,5) -- (7,5) -- (6.5,5.5) -- (6,5) -- (5.5,5.5) -- (3.5,5.5) -- (3,5)
-- (0,5) -- (-0.5,5);
\draw[fill,red,opacity=0.17] (-0.5,5) -- (0,5) -- (3,5) -- (3.5,5.5)
-- (5.5,5.5) -- (6,5) -- (7,5) -- (7.5,5)
-- (7.5,4) -- (7,4) -- (6,4) -- (5.5,4.5) -- (3.5,4.5) -- (3,4) -- (0,4)
-- (-0.5,4);
\draw[fill,violet,opacity=0.2] (-0.5,1) -- (7,1) -- (7.5,1)
-- (7.5,2) -- (7,2) -- (-0.5,2);
\draw[fill,yellow,opacity=0.27] (0.5,7.5) -- (0.5,7) -- (2.5,5) -- (2.5,1)
-- (3.5,0) -- (3.5,-0.5)
-- (2.5,-0.5) -- (2.5,0) -- (1.5,1) -- (1.5,5) -- (-0.5,7) -- (-0.5,7.5);
\draw[fill,orange,opacity=0.24] (5.5,7.5) -- (5.5,3) -- (5.5,2) -- (4.5,1) --
(4.5,-0.5)
-- (3.5,-0.5) -- (3.5,1) -- (4.5,2) -- (4.5,3) -- (4.5,7.5);
\draw[blue!80!black,very thick,opacity=0.4] (-0.5,6) -- (0,6) -- (3,6) -- (3.5,6.5)
-- (5.5,6.5) -- (6,6) -- (6.5,6.5) -- (7,6) -- (7.5,6);
\draw[blue!80!black,very thick,opacity=0.4] (7.5,5) -- (7,5) -- (6.5,5.5)
-- (6,5) -- (5.5,5.5) -- (3.5,5.5) -- (3,5) -- (0,5) -- (-0.5,5);
\draw[red!75!black,very thick,opacity=0.4] (-0.5,5) -- (0,5) -- (3,5) -- (3.5,5.5)
-- (5.5,5.5) -- (6,5) -- (7,5) -- (7.5,5);
\draw[red!75!black,very thick,opacity=0.4]
(7.5,4) -- (7,4) -- (6,4) -- (5.5,4.5) -- (3.5,4.5) -- (3,4) -- (0,4)
-- (-0.5,4);
\draw[green!75!blue,very thick,opacity=0.5]
(-0.5,7) -- (0,7) -- (1,7) -- (1.5,7.5) -- (5.5,7.5) -- (6,7) -- (6.5,7.5)
-- (7.5,7.5);
\draw[green!75!blue,very thick,opacity=0.5]
(7.5,6.5) -- (6.5,6.5) -- (6,6) -- (5.5,6.5) -- (1.5,6.5) -- (1,6)
-- (0,6) -- (-0.5,6);
\draw[violet,very thick,opacity=0.4] (-0.5,1) -- (7,1) -- (7.5,1);
\draw[violet,very thick,opacity=0.4] (7.5,2) -- (7,2) -- (-0.5,2);
\draw[yellow!75!red,very thick,opacity=0.5] (0.5,7.5) -- (0.5,7) -- (2.5,5)
-- (2.5,1) -- (3.5,0) -- (3.5,-0.5);
\draw[yellow!75!red,very thick,opacity=0.5]
(2.5,-0.5) -- (2.5,0) -- (1.5,1) -- (1.5,5) -- (-0.5,7) -- (-0.5,7.5);
\draw[orange,very thick,opacity=0.5] (5.5,7.5) -- (5.5,3) -- (5.5,2) -- (4.5,1) --
(4.5,-0.5);
\draw[orange,very thick,opacity=0.5]
(3.5,-0.5) -- (3.5,1) -- (4.5,2) -- (4.5,3) -- (4.5,7.5);
\draw (-0.5,-0.5) -- (-0.5,7.5) -- (7.5,7.5) -- (7.5,-0.5) -- (-0.5,-0.5);
\draw[white] (-1,-1) -- (-1,8) -- (8,8) -- (8.0,-1.0) -- (-1.0,-1.0);
\end{tikzpicture}
\end{floatrow}
~\newline
\begin{floatrow}
\begin{tikzpicture}[scale=0.65]
\draw[fill,green,opacity=0.2] (-0.5,7) -- (0,7) -- (1,7) -- (1.5,7.5)
-- (5.5,7.5) -- (6,7) -- (6.5,7.5) -- (7.5,7.5)
-- (7.5,6.5) -- (6.5,6.5) -- (6,6) -- (5.5,6.5) -- (1.5,6.5) -- (1,6)
-- (0,6) -- (-0.5,6);
\draw[fill,blue,opacity=0.2] (-0.5,6) -- (0,6) -- (3,6) -- (3.5,6.5)
-- (5.5,6.5) -- (6,6) -- (6.5,6.5) -- (7,6) -- (7.5,6)
--(7.5,5) -- (7,5) -- (6.5,5.5) -- (6,5) -- (5.5,5.5) -- (3.5,5.5) -- (3,5)
-- (0,5) -- (-0.5,5);
\draw[fill,red,opacity=0.17] (-0.5,5) -- (0,5) -- (3,5) -- (3.5,5.5)
-- (5.5,5.5) -- (6,5) -- (7,5) -- (7.5,5)
-- (7.5,4) -- (7,4) -- (6,4) -- (5.5,4.5) -- (3.5,4.5) -- (3,4) -- (0,4)
-- (-0.5,4);
\draw[fill,violet,opacity=0.2] (-0.5,1) -- (7,1) -- (7.5,1)
-- (7.5,2) -- (7,2) -- (-0.5,2);
\draw[blue!80!black,very thick,opacity=0.4] (-0.5,6) -- (0,6) -- (3,6) -- (3.5,6.5)
-- (5.5,6.5) -- (6,6) -- (6.5,6.5) -- (7,6) -- (7.5,6);
\draw[blue!80!black,very thick,opacity=0.4] (7.5,5) -- (7,5) -- (6.5,5.5)
-- (6,5) -- (5.5,5.5) -- (3.5,5.5) -- (3,5) -- (0,5) -- (-0.5,5);
\draw[red!75!black,very thick,opacity=0.4] (-0.5,5) -- (0,5) -- (3,5) -- (3.5,5.5)
-- (5.5,5.5) -- (6,5) -- (7,5) -- (7.5,5);
\draw[red!75!black,very thick,opacity=0.4]
(7.5,4) -- (7,4) -- (6,4) -- (5.5,4.5) -- (3.5,4.5) -- (3,4) -- (0,4)
-- (-0.5,4);
\draw[green!75!blue,very thick,opacity=0.5]
(-0.5,7) -- (0,7) -- (1,7) -- (1.5,7.5) -- (5.5,7.5) -- (6,7) -- (6.5,7.5)
-- (7.5,7.5);
\draw[green!75!blue,very thick,opacity=0.5]
(7.5,6.5) -- (6.5,6.5) -- (6,6) -- (5.5,6.5) -- (1.5,6.5) -- (1,6)
-- (0,6) -- (-0.5,6);
\draw[violet,very thick,opacity=0.4] (-0.5,1) -- (7,1) -- (7.5,1);
\draw[violet,very thick,opacity=0.4] (7.5,2) -- (7,2) -- (-0.5,2);
\draw (-0.5,-0.5) -- (-0.5,7.5) -- (7.5,7.5) -- (7.5,-0.5) -- (-0.5,-0.5);
\draw[white] (-1,-1) -- (-1,8) -- (8,8) -- (8.0,-1.0) -- (-1.0,-1.0);
\foreach \x/\y in {-0.2/5,1.08/5,2.37/5,3.6/5.5,4.83/5.5,6.1/5,7.25/5}
  \draw[red,thick,->,opacity=0.5] (\x,\y) -- (\x,\y+0.35);
\foreach \x/\y in {-0.2/4,1.08/4,2.37/4,3.6/4.5,4.83/4.5,6.1/4,7.25/4}
  \draw[red,thick,->,opacity=0.5] (\x,\y) -- (\x,\y-0.35);
\foreach \x/\y in {-0.2/6,1.08/6,2.37/6,3.6/6.5,4.83/6.5,6.1/6,7.25/6}
  \draw[blue,thick,->,opacity=0.5] (\x,\y) -- (\x,\y+0.35);
\foreach \x/\y in {-0.2/5,1.08/5,2.37/5,3.6/5.5,4.83/5.5,6.1/5,7.25/5}
  \draw[blue,thick,->,opacity=0.5] (\x,\y) -- (\x,\y-0.35);
\foreach \x/\y in {-0.2/7,1.08/7.08,2.37/7.5,3.6/7.5,4.83/7.5,6.1/7.1,7.25/7.5}
  \draw[green!70!blue,thick,->,opacity=0.5] (\x,\y) -- (\x,\y+0.35);
\foreach \x/\y in {-0.2/6,1.08/6.08,2.37/6.5,3.6/6.5,4.83/6.5,6.1/6.1,7.25/6.5}
  \draw[green!70!blue,thick,->,opacity=0.5] (\x,\y) -- (\x,\y-0.35);
\foreach \x/\y in {-0.2/2,1.03/2,2.27/2,3.5/2,4.73/2,5.97/2,7.2/2}
  \draw[violet,thick,->,opacity=0.5] (\x,\y) -- (\x,\y+0.35);
\foreach \x/\y in {-0.2/1,1.03/1,2.27/1,3.5/1,4.73/1,5.97/1,7.2/1}
  \draw[violet,thick,->,opacity=0.5] (\x,\y) -- (\x,\y-0.35);
\end{tikzpicture}
\begin{tikzpicture}[scale=0.65]
\draw[fill,yellow,opacity=0.27] (0.5,7.5) -- (0.5,7) -- (2.5,5) -- (2.5,1)
-- (3.5,0) -- (3.5,-0.5)
-- (2.5,-0.5) -- (2.5,0) -- (1.5,1) -- (1.5,5) -- (-0.5,7) -- (-0.5,7.5);
\draw[fill,orange,opacity=0.24] (5.5,7.5) -- (5.5,3) -- (5.5,2) -- (4.5,1) --
(4.5,-0.5)
-- (3.5,-0.5) -- (3.5,1) -- (4.5,2) -- (4.5,3) -- (4.5,7.5);
\draw[yellow!75!red,very thick,opacity=0.5] (0.5,7.5) -- (0.5,7) -- (2.5,5)
-- (2.5,1) -- (3.5,0) -- (3.5,-0.5);
\draw[yellow!75!red,very thick,opacity=0.5]
(2.5,-0.5) -- (2.5,0) -- (1.5,1) -- (1.5,5) -- (-0.5,7) -- (-0.5,7.5);
\draw[orange,very thick,opacity=0.5] (5.5,7.5) -- (5.5,3) -- (5.5,2) -- (4.5,1) --
(4.5,-0.5);
\draw[orange,very thick,opacity=0.5]
(3.5,-0.5) -- (3.5,1) -- (4.5,2) -- (4.5,3) -- (4.5,7.5);
\draw (-0.5,-0.5) -- (-0.5,7.5) -- (7.5,7.5) -- (7.5,-0.5) -- (-0.5,-0.5);
\draw[white] (-1,-1) -- (-1,8) -- (8,8) -- (8.0,-1.0) -- (-1.0,-1.0);
\foreach \x/\y in {3.5/-0.2,2.5/1.03,2.5/2.27,2.5/3.5,2.5/4.73,1.51/5.97,0.5/7.2}
  \draw[yellow!75!red,thick,->,opacity=0.5] (\x,\y) -- (\x+0.35,\y);
\foreach \x/\y in {2.5/-0.2,1.5/1.03,1.5/2.27,1.5/3.5,1.5/4.73,0.51/5.97,-0.5/7.2}
  \draw[yellow!75!red,thick,->,opacity=0.5] (\x,\y) -- (\x-0.35,\y);
\foreach \x/\y in {3.5/-0.2,3.53/1.03,4.5/2.27,4.5/3.5,4.5/4.73,4.5/5.97,4.5/7.2}
  \draw[orange,thick,->,opacity=0.5] (\x,\y) -- (\x-0.35,\y);
\foreach \x/\y in {4.5/-0.2,4.53/1.03,5.5/2.27,5.5/3.5,5.5/4.73,5.5/5.97,5.5/7.2}
  \draw[orange,thick,->,opacity=0.5] (\x,\y) -- (\x+0.35,\y);
\end{tikzpicture}
\end{floatrow}
\caption{The above pictures illustrate the proofs of Theorem~\ref{CoveringOfCompactSet} and Proposition \ref{Prop.Bi.Lip.simpl}. For an explanation of the pictures, see the accompanying text.\label{SketchStretching}}
\end{figure}

In the accompanying figure, the strategy of the proof of the proposition is
illustrated.

\textbf{Explanation of Figure \ref{SketchStretching}.}
In the two pictures at the top, we provide an illustration of the proof of Theorem~\ref{CoveringOfCompactSet} due to Alberti, Cs\"ornyei, and Preiss, which reduces the covering assertion to the combinatorial result in Lemma \ref{CombinatorialResult}.
The unit square is divided into $2^{2l}$ squares of size $1/2^l$ (the division is sketched by gray lines). The compact set $K$ corresponds to the gray regions. The black points in these pictures represent the centers of all grid squares which have nonempty intersection with $K$.  The blue and red lines in the second picture at the top represent the graphs of the $1$-Lipschitz maps $f_i$ and $g_j$ which cover the black points, as provided by Lemma \ref{CombinatorialResult}.\smallskip

In the picture at the center and the two pictures at the bottom, we illustrate the construction of the stretching map of Proposition~\ref{Prop.Bi.Lip.simpl} (using a different example). In these pictures, the covering of the set $K$ by strips -- as provided by Lemma \ref{IntersectionFreeCovering} -- is indicated by the framed colored regions, each strip being assigned a separate color. In the picture in the center, the compact set $K$ is also depicted: It corresponds to the gray regions in the background of the strips. The pictures at the bottom illustrate the construction of our stretching map $\phi=(\phi_1,\phi_2)$ in Proposition \ref{Prop.Bi.Lip.simpl}: To define the second component $\phi_2$ of our map $\phi$, we stretch all the horizontal strips by a factor of $1+2\tau$ (see the penultimate picture); similarly, to define the first component $\phi_1$ we stretch all the vertical strips by a factor of $1+2\tau$ (see the last picture).

\begin{proof}[Proof of Proposition \ref{Prop.Bi.Lip.simpl}]
First note that for $|K|=0$ the identity map $\phi=\operatorname{id}$
trivially has all the properties stated in the proposition. Hence we can assume that
$|K|>0$.

\textit{Step 1 (preliminaries).}
 Applying Lemma \ref{IntersectionFreeCovering} to $K$ and $\epsilon=|K|^{1/2},$  there exist
 $\delta=1/2^l>0$ (for some $l$ large enough), $N,M\in \mathbb{N}$ and $1$-Lipschitz functions
 $f_i,g_j:[0,1]\rightarrow [0,1],$ $1\leq i\leq N,$ $1\leq j\leq M$ such that
\begin{equation}
\label{control.N.M}
\delta N\leq 2\sqrt{|K|}\quad \text{and}\quad \delta M\leq 2\sqrt{|K|},
\end{equation}
\begin{equation}
\label{strips.covering.K}
K\subset \big(\bigcup_{i=1}^N\{|f_i(x)-y|\leq \delta\}\big)
\cup \big(\bigcup_{j=1}^M \{|g_j(y)-x|\leq \delta\}\big)\subset [0,1]^2,
\end{equation}
\begin{equation}
\label{control.intersections}
\sum_{i=1}^N\chi_{\{|f_i(x)-y|< \delta\}}\leq 1\quad
\text{and}\quad \sum_{j=1}^M\chi_{\{|g_j(y)-x|< \delta\}}\leq 1,
\end{equation}
where $\{|f_i(x)-y|\leq \delta\}$ is a short notation for
$\{(x,y):x\in [0,1],|f_i(x)-y|\leq \delta\}$ and similarly for the $g_j$.

\textit{Step 2 (definition of $\phi$).}
Define our map $\phi=(\phi_1,\phi_2)$ by
\begin{align*}\phi_2(x,y)&=(1-4\delta N\tau)y+2\tau\sum_{i=1}^{N}
\Big(\min\left(2\delta,y-f_i(x)+\delta\right)-\min\left(0,y-f_i(x)+\delta\right)
\Big)
\\
&=(1-4\delta N\tau)y+2\tau\sum_{i=1}^{N}\left\{\begin{array}{cl}
2\delta& \text{if $y\geq f_i(x)+\delta$}\\
y- f_i(x)+\delta & \text{if $f_i(x)-\delta\leq y\leq f_i(x)+\delta$}\\
0 &\text{if $y\leq f_i(x)-\delta$}
\end{array}
\right.
\end{align*}
and, symmetrically,
\begin{align*}
\phi_1(x,y)&=(1-4\delta M\tau)x+2\tau\sum_{j=1}^{M}
\Big(
\min\left(2\delta,x-g_j(y)+\delta\right)-\min\left(0,x-g_j(y)+\delta\right)
\Big)
\\
&=(1-4\delta M\tau)x+2\tau\sum_{j=1}^{M}\left\{\begin{array}{cl}
2\delta& \text{if $x\geq g_j(y)+\delta$}\\
x- g_j(y)+\delta & \text{if $g_j(y)-\delta\leq x\leq g_j(y)+\delta$}\\
0 &\text{if $x\leq g_j(y)-\delta$}.
\end{array}
\right.
\end{align*}

\textit{Step 3 (properties of $\phi_2$).} First from \eqref{control.N.M} we get that, for every $(x,y)\in [0,1]^2$,
\begin{equation}
\label{phi.2.L.infty}
|\phi_2(x,y)-y|\leq 4\tau \delta N y+4\tau \delta N\leq 16\tau\sqrt{|K|}.
\end{equation}
Together with the corresponding estimate for $\phi_1$, this shows \eqref{prop.L.infty}.
Moreover, since all the strips are contained in $[0,1]^2$
(see the second inclusion in (\ref{strips.covering.K})) we get that
for $1\leq i\leq N$ and $x\in [0,1]$
\begin{align*}
0\leq f_i(x)-\delta\leq f_i(x)+\delta\leq 1
\end{align*}
which directly implies
\begin{equation}\label{phi.2.boundary.horiz}
\phi_2(x,0)=0\quad \text{and}\quad \phi_2(x,1)=1\quad \text{for all $x\in [0,1].$}
\end{equation}
We now turn to the properties of the derivatives of $\phi_2.$
It is elementary to see that $\phi_2$ is Lipschitz and that for a.\,e.\ $(x,y)\in [0,1]^2$ we have
\begin{align*}
\partial_{x}\phi_2(x,y)&=-2\tau \sum_{i=1}^Nf_i'(x)\chi_{\{|f_i(x)-y|<
\delta\}},
\\
\partial_{y}\phi_2(x,y)&=1-4\delta N\tau+2\tau \sum_{i=1}^N\chi_{\{|f_i(x)-y|<
\delta\}}.
\end{align*}
We first deal with $\partial_x\phi_2.$
From the fact that $|f_i'|\leq 1$ and (\ref{control.intersections}),
we immediately obtain
\begin{align*}
\partial_x\phi_2&=0 \quad \text{a.\,e.\ outside
$\bigcup_{i=1}^N\{|f_i(x)-y|< \delta\}$},\\
|\partial_x\phi_2|&\leq 2\tau \quad \text{a.\,e.\ in $\bigcup_{i=1}^N\{|f_i(x)-y|<
\delta\}$}.
\end{align*}
Since (\ref{control.N.M}) yields
\begin{align*}
|\cup_{i=1}^N\{|f_i(x)-y|< \delta\}|\leq 2\delta N\leq 4\sqrt{|K|},
\end{align*}
we directly obtain
\begin{align*}
\int_{[0,1]^2}|\partial_x\phi_2|^p \,dx
=\int_{\cup_{i=1}^N\{|f_i(x)-y|<\delta\}} |\partial_x\phi_2|^p \,dx
\leq 4\sqrt{|K|}(2\tau)^p
\end{align*}
holds, which implies for every $1\leq p\leq \infty$
\begin{equation}\label{phi.2.x.L.p}
\|\partial_x\phi_2\|_{L^p([0,1]^2)}\leq 8\tau |K|^{1/(2p)}.
\end{equation}
We now deal with  $\partial_y\phi_2.$ We first notice that we have
\begin{align}\label{partial.y.phi.2.outside}
&\partial_{y}\phi_2(x,y)=1-4\tau\delta N\quad \text{for }(x,y)\in [0,1]^2\setminus \cup_{i=1}^N\{(x,y)\in [0,1]^2:|f_i(x)-y|\leq \delta\}.
\end{align}
and
\begin{align}\label{partial.y.phi.2.inside}
&\partial_{y}\phi_2(x,y)=1-4\tau\delta N+2\tau \quad \text{for }(x,y)\in \cup_{i=1}^N\{(x,y)\in [0,1]^2:|f_i(x)-y|\leq \delta\}.
\end{align}
This directly yields the lower bound \eqref{prop.derivees.phi.2}. The bound \eqref{prop.derivees.phi.1} is obtained by analogous estimates for $\phi_1$.
Using  (\ref{partial.y.phi.2.outside}) and (\ref{partial.y.phi.2.inside})
and noticing as before that
\begin{align*}
|\{(x,y)\in [0,1]^2: |f_i(x)-y|\leq \delta \text{ for some }i\}|
\leq 2\delta N\leq 4\sqrt{|K|},
\end{align*}
we get for every fixed $x\in [0,1]$
\begin{align*}
&\int_{[0,1]^2}|\partial_{y}\phi_2(x,y)-1|^p dx
\\&
= \int_{\{(x,y)\in [0,1]^2: |f_i(x)-y|>\delta \text{ }\forall i\}}
|\partial_{y}\phi_2(x,y)-1|^p \,dx
\\&~~~
+\int_{\{(x,y)\in [0,1]^2: |f_i(x)-y|\leq \delta \text{ for some }i\}}
|\partial_{y}\phi_2(x,y)-1|^p \,dx
\\&
\leq  (C\tau)^{p}\sqrt{|K|}.
\end{align*}
Thus, for every $x\in [0,1]$ and every $1\leq p\leq \infty$ we have
\begin{equation}\label{phi.2.y.L.p.line}
\|\partial_y\phi_2-1\|_{L^p([0,1]^2)}\leq C
\tau |K|^{1/(2p)}.
\end{equation}
In connection with the analogous estimates for $\phi_1$, \eqref{phi.2.x.L.p} and \eqref{phi.2.y.L.p.line} imply \eqref{prop.W.1.p}.

\textit{Step 4.} In this step we prove the three assertions involving the determinant, namely (\ref{prop.nabla.phi.bounded.by.det})-(\ref{prop.det.global}). For a.\,e.\ $(x,y)\in [0,1]^2$ we have
\begin{align*}
\nabla\phi(x,y)=
\begin{pmatrix}
1-4\tau\delta M+2\tau \sum_{j=1}^M\chi_{\{|g_j(y)-x|< \delta\}}
&
-2\tau \sum_{j=1}^Mg_j'(y)\chi_{\{|g_j(y)-x|<\delta\}}
\\
-2\tau \sum_{i=1}^Nf_i'(x)\chi_{\{|f_i(x)-y|<\delta\}}
&
1-4\tau\delta N+2\tau \sum_{i=1}^N\chi_{\{|f_i(x)-y|< \delta\}}
\end{pmatrix}.
\end{align*}
Denote by $S$ the union of the interior of all the strips, i.e.
\begin{align*}
S:=\big(\cup_{i=1}^N\{|f_i(x)-y|< \delta\}\big)\cup \big(\cup_{j=1}^M
\{|g_j(y)-x|< \delta\}\big).
\end{align*}
Then, obviously, a.\,e.\ outside $S$ we have
\begin{align*}
\nabla\phi=
\begin{pmatrix}
1-4\delta M\tau
& 0
\\
0&
1-4\delta N\tau
\end{pmatrix}.
\end{align*}
Thus, recalling that $M\delta,N\delta\leq 2\sqrt{|K|}$ (see
(\ref{control.N.M})),
\begin{equation}
\label{phi.det.outside.A}
\det\nabla \phi= 1-4\tau(\delta M+\delta N)+16\delta^2MN\tau^2
\geq 1-16\tau \sqrt{|K|}\quad \text{a.\,e.\ in $[0,1]^2\setminus S.$}
\end{equation}
Furthermore, assuming $\tau |K|^{1/2}\leq 1/32$, we have
\begin{equation}
\label{nabla.bounded.by.det.outside.A}
|\nabla \phi|\leq 2\det\nabla\phi\quad \text{a.\,e.\ in $[0,1]^2\setminus S.$}
\end{equation}
Trivially   for all $(x,y)\in S$ we have
\begin{align*}
1\leq \sum_{i=1}^N\chi_{\{|f_i(x)-y|<
\delta\}}+\sum_{j=1}^M\chi_{\{|g_j(y)-x|< \delta\}}.
\end{align*}
Thus we get for a.\,e.\ $(x,y)\in S$, recalling that $|f'_i|,|g'_j|\leq 1$ and
using (\ref{control.intersections}),
\begin{align*}
\det\nabla \phi(x,y)&=\left[1-4\tau\delta M+2\tau
\sum_{j=1}^M\chi_{\{|g_j(y)-x|< \delta\}}\right]\left[1-4\tau\delta N+2\tau
\sum_{i=1}^N\chi_{\{|f_i(x)-y|< \delta\}}\right]\\
&~~~-4\tau^2\left[\sum_{j=1}^Mg_j'(y)\chi_{\{|g_j(y)-x|<
\delta\}}\right]\left[\sum_{i=1}^Nf_i'(x)\chi_{\{|f_i(x)-y|<
\delta\}}\right]
\\
&= 1+2\tau\left[
\left(\sum_{j=1}^M\chi_{\{|g_j(y)-x|< \delta\}}
+\sum_{i=1}^N\chi_{\{|f_i(x)-y|< \delta\}}\right)-2\delta (M+N)\right]\\
&~~~
+4\tau^2
\Bigg(
\left[\sum_{j=1}^M \chi_{\{|g_j(y)-x|<
\delta\}}\right]\left[\sum_{i=1}^N \chi_{\{|f_i(x)-y|<
\delta\}}\right]
\\
&~~~~~~~~~~~~~~~
-\left[\sum_{j=1}^Mg_j'(y)\chi_{\{|g_j(y)-x|\leq
\delta\}}\right]\left[\sum_{i=1}^Nf_i'(x)\chi_{\{|f_i(x)-y|<
\delta\}}\right]\\
&~~~~~~~~~~~~~~~+4\delta M\delta N-2 \delta N
\sum_{j=1}^M\chi_{\{|g_j(y)-x|< \delta\}}
-2 \delta M
\sum_{i=1}^N\chi_{\{|f_i(x)-y|< \delta\}}
\Bigg)
\\
&\geq 1+2\tau\left[1-2\delta (M+N)\right]-8\tau^2\delta(M+N)
\\
&= 1+2\tau[1-2\delta (M+N)-8\tau\delta(M+N)].
\end{align*}
Recalling (\ref{control.N.M}), we may choose the upper bound $c$ for $|K|$ and $\sqrt{|K|}\tau$ in the assumptions of the proposition small enough so that
\begin{align*}
1+2\tau[1-2\delta (M+N)-8\tau\delta(M+N)]
\geq 1+\tau.
\end{align*}
Combining the last two estimates  gives
\begin{equation}
\label{phi.det.A}
\det\nabla \phi\geq 1+\tau \quad \text{a.\,e.\ in $S.$}
\end{equation}
Since $K\subset S$ (see (\ref{strips.covering.K})), the last inequality
obviously holds true a.\,e.\ in $K.$  Combining (\ref{phi.det.outside.A}) and the
last estimate gives
\begin{align*}
\det \nabla \phi\geq 1-16\sqrt{|K|}\tau\quad
\text{a.\,e.\ on $[0,1]^2.$}
\end{align*}
Taking into account the bound $|\nabla \phi-\Id|\leq C\tau$, we also obtain
\begin{align*}
|\nabla \phi| \leq \det\nabla \phi \quad \text{a.\,e.\ in }S.
\end{align*}

\textit{Step 5.}  We finally prove that $\phi$ is bi-Lipschitz from $[0,1]^2$ to $[0,1]^2.$
Taking $\tau\sqrt{|K|}\leq 1/32$, the last inequality implies
\begin{align*}
\det\nabla \phi\geq 1/2>0\quad \text{a.\,e.\ in $\Omega.$}
\end{align*}
By (\ref{phi.2.boundary.horiz}), \eqref{prop.W.1.p} for $p=\infty$, \eqref{prop.derivees.phi.2}, and the analogous estimates for $\phi_1$, we deduce that
\begin{align*}\phi|_{\partial (0,1)^2}:\partial (0,1)^2\rightarrow \partial (0,1)^2\quad\text{is bi-Lipschitz.}
\end{align*}
Hence, classical results based on the degree theory show that $\phi:[0,1]^2\rightarrow [0,1]^2$ is also bi-Lipschitz (see e.\,g.\ Theorem 2 in the work of Ball \cite{Ball2}; note that $\phi|_{\partial (0,1)^2}$ can be easily extended to a homeomorphism from $[0,1]^2$ to $[0,1]^2$).
\end{proof}
\begin{appendix}
\section{Decomposition of $C^{1,1}$-domains into subdomains that are $C^{1,1}$-equivalent to the unit square}

The following lemma has been used in Section \ref{sub.sect.Simplification.of.the.domain}.

\begin{lemma}\label{lemma:decomposition.domain}
Let $\Omega\subset \mathbb{R}^2$ be a bounded domain with boundary of class $C^{1,1}$. Then there exists a finite number of open sets $A_i\subset \Omega$ such that:
\begin{itemize}
\item For every $A_i$ there exists a $C^{1,1}$-diffeomorphism which maps $\overline{A_i}$ to the unit square $[0,1]^2$.
    \item The sets $A_i$ are pairwise disjoint.
\item We have $\overline{\Omega}=\bigcup_i \overline{A_i}$.
\end{itemize}
\end{lemma}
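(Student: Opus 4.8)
The statement is a purely geometric decomposition result for bounded $C^{1,1}$-domains in the plane: we must write $\overline{\Omega}$ as a finite union of closures of pairwise disjoint open subdomains $A_i\subset\Omega$, each $C^{1,1}$-diffeomorphic to the closed unit square. The idea is to separate the construction into an \emph{interior} piece and a \emph{collar} (boundary layer) piece, handle each separately, and then glue. I would first cover $\overline{\Omega}$ by finitely many coordinate charts in which $\partial\Omega$ is a $C^{1,1}$ graph, using compactness of $\partial\Omega$; this is where the $C^{1,1}$ hypothesis enters. From these charts one obtains a tubular neighbourhood: there is $\varepsilon>0$ such that the normal exponential map $\Phi:\partial\Omega\times[0,\varepsilon]\to\overline{\Omega}$, $(\sigma,t)\mapsto \sigma - t\,\nu(\sigma)$, is a $C^{1,1}$-diffeomorphism onto a closed collar neighbourhood $\overline{N}$ of $\partial\Omega$ in $\overline{\Omega}$ (the regularity of $\nu$ is $C^{0,1}$, which is exactly what makes $\Phi$ bi-Lipschitz with $C^{1,1}$ inverse; the key point is that for $\varepsilon$ small enough $\Phi$ is injective, which follows from a uniform interior ball / reach estimate for $C^{1,1}$ boundaries). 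The complementary set $\Omega_{\mathrm{int}}:=\Omega\setminus \overline{N}$ is then a compactly contained open subset of $\Omega$.

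**Decomposing the two pieces.** For the collar $\overline{N}$: since $\partial\Omega$ is a finite disjoint union of $C^{1,1}$ Jordan curves, each of length $L_k$, I would cut each curve into finitely many arcs of equal length and pull back via $\Phi$; each arc $\times\,[0,\varepsilon]$ is $C^{1,1}$-diffeomorphic to a rectangle, hence (after an affine rescaling) to $[0,1]^2$. One must take care that these pieces are \emph{open} subsets of $\Omega$ and that their closures exactly tile $\overline{N}$ — this is handled by taking the arcs to be half-open and then passing to interiors, a routine bookkeeping step. For the interior part $\overline{\Omega_{\mathrm{int}}}$, which is a compact subset of the open set $\Omega$, I would first cover it by finitely many small open squares (or disks) contained in $\Omega$, then use a standard triangulation/polygonal-subdivision argument: $\overline{\Omega_{\mathrm{int}}}$ can be covered by finitely many closed triangles (or better, closed quadrilaterals) with pairwise disjoint interiors, all contained in $\Omega$, and each closed quadrilateral (even a triangle, by a standard degenerate bilinear map or by barycentric-type coordinates smoothed near the corners) is $C^{1,1}$-diffeomorphic to $[0,1]^2$. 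Alternatively, and perhaps cleaner, one simply notes that any compact polygonal region can be cut into finitely many convex quadrilaterals and invokes the elementary fact that a convex quadrilateral admits a bi-Lipschitz, piecewise-smooth — and after a standard corner mollification, $C^{1,1}$ — parametrisation by the square.

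**Gluing and the main obstacle.** The final step is to collect the collar pieces and the interior pieces into a single finite family $\{A_i\}$ of pairwise disjoint open sets whose closures cover $\overline{\Omega}$; since the collar $\overline{N}$ and $\overline{\Omega_{\mathrm{int}}}$ meet only along the hypersurface $\Phi(\partial\Omega\times\{\varepsilon\})$, which has measure zero and is contained in the boundaries of the pieces, the disjointness of \emph{interiors} and the covering of $\overline{\Omega}$ are immediate. The genuinely delicate point — the one I expect to be the main obstacle — is the existence of the $C^{1,1}$ tubular neighbourhood with the stated regularity: one needs that a $C^{1,1}$ boundary has positive reach, so that the normal map $\Phi$ is a global $C^{1,1}$-diffeomorphism onto a collar, and that the corresponding ``arc $\times$ interval'' blocks can be straightened to squares by maps that are genuinely $C^{1,1}$ (not merely Lipschitz) with $C^{1,1}$ inverse. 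A secondary, more pedestrian obstacle is the insistence that each $A_i$ be \emph{open} with $A_i\subset\Omega$ while the $\overline{A_i}$ still tile $\overline{\Omega}$: this forces one to define the $A_i$ as interiors of the tiling blocks and to check that no block's closure sticks out of $\overline{\Omega}$, which is automatic for the interior blocks and, for the collar blocks, follows from $\Phi$ mapping $\partial\Omega\times[0,\varepsilon]$ into $\overline{\Omega}$. Everything else — finiteness, the explicit diffeomorphisms of rectangles and convex quadrilaterals onto $[0,1]^2$, the corner smoothing — is standard and I would state it without detailed computation.
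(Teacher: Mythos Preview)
Your overall strategy --- split into a boundary collar and an interior, decompose each --- matches the paper, and your quadrilateral alternative for the interior is exactly what the paper does (triangulate, split each triangle into three convex quadrilaterals, bilinear map to $[0,1]^2$; your first suggestion of sending a triangle directly to a square fails, since no $C^1$-diffeomorphism between a closed triangle and a closed square exists by a corner-count argument). The genuine gap is the collar step. For a $C^{1,1}$ boundary the unit normal $\nu$ is only $C^{0,1}$, so in an arc-length parametrisation $\gamma$ the normal map $\Phi(s,t)=\gamma(s)-t\nu(s)$ has $\partial_s\Phi=\gamma'(s)-t\nu'(s)$ with $\nu'$ merely $L^\infty$; hence $\Phi$ is bi-Lipschitz but \emph{not} $C^1$, and the $C^{1,1}$-diffeomorphism you claim does not exist. (The inverse $p\mapsto(\pi(p),d(p))$ is likewise only Lipschitz: the signed distance $d$ is $C^{1,1}$, but the nearest-point projection $\pi=\operatorname{id}-d\,\nabla d$ is only $C^{0,1}$.) A related problem is that the inner collar boundary $\Phi(\partial\Omega\times\{\varepsilon\})$ is itself a curved $C^{1,1}$ curve, so your $\Omega_{\mathrm{int}}$ is not polygonal and cannot be tiled exactly by straight quadrilaterals; your interior and collar pieces will then either overlap or leave gaps.

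The paper resolves both issues at once by making the inner collar boundary \emph{polygonal}: it inscribes a polygon $P$ with vertices on a level set $\{F=\delta\}$ of a $C^{1,1}$ defining function $F$ for $\Omega$, and joins each vertex to $\partial\Omega$ by a short straight segment in the direction $-\nabla F$. Each resulting boundary block $B_i$ then has three straight sides and one $C^{1,1}$-curved side (a piece of $\partial\Omega$). After a smooth bilinear map sending a convex quadrilateral $P_i\supset B_i$ onto $[0,1]^2$, the image of $B_i$ becomes a subgraph region $\{(x,y):0\le x\le 1,\ 0\le y\le h_i(x)\}$ with $h_i\in C^{1,1}$ and $h_i>0$, and the map $(x,y)\mapsto(x,h_i(x)\,y)$ is a genuine $C^{1,1}$-diffeomorphism from $[0,1]^2$ onto this region. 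The interior, now an honest polygon, is then triangulated and each triangle split into three convex quadrilaterals exactly as in your alternative.
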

\begin{proof}
We first summarize the proof.
\begin{itemize}
\item First we reduce the problem to the case of a polygon: We approximate $\Omega$ from inside by a polygon whose boundary follows closely the boundary $\partial\Omega$ (in a sense to be made explicit below). The outer part may then be decomposed as shown in Figure \ref{ApproximateBoundaryByPolygon}. The outer quadrilaterals (which have one curved side) are easily mapped by a $C^{1,1}$-diffeomorphism to the unit square. It then just remains to treat the case of the (inner) polygon (which possibly contains holes).
\item Any polygon (including polygons with holes) may be decomposed into a finite number of triangles, so the problem is reduced to the case of a triangle. Since every triangle may be mapped by an affine map to the reference triangle with corners $(0,0)$, $(1,0)$, and $(0,1)$, we only need to consider the case of the reference triangle.
\item The reference triangle may be covered by three convex quadrilaterals as shown in Figure \ref{CoverTriangle}.
Applying Lemma \ref{lemma:bilinear}, each of these quadrilaterals can be mapped by a (bi-linear) $C^{\infty}$-diffeomorphism to the unit square.
\end{itemize}
It only remains to make explicit the first point. We split its proof into three steps.\smallskip

\textit{Step 1.} We can find (see e.\,g.\ Theorem 1.2.6 in \cite{KrantzParks}) a function $F:\mathbb{R}^2 \rightarrow \mathbb{R}$ with the following properties:
\begin{itemize}
\item $F$ is of class $C^{1,1}$,
\item we have $\{F=0\}=\partial\Omega$ and $\{F>0\}=\Omega$,
\item $\nabla F$ is nonzero on $\partial\Omega$.
\end{itemize}
Take $\delta>0$ small enough and construct a closed polygon $P$ whose boundary is contained in the set $\{\delta/2<F<2\delta\}$, whose vertices lie on the curve $\{F=\delta\}$, and whose edges have length of order $\delta$. Note that for $\delta$ small enough, such a polygon exists. Connect every vertex of $P$ to the boundary of $\partial \Omega$ with a line segment pointing in direction $-\nabla F$ (the green line segments in Figure \ref{ApproximateBoundaryByPolygon}). Let us denote the open regions which are bounded by the polygon $P$ on one side and these line segments and $\partial\Omega$ on the other sides as $B_i$ (see Figure \ref{ApproximateBoundaryByPolygon}). We will show in the remaining two steps that every $\overline{B_i}$ is $C^{1,1}$-diffeomorphic to the unit square $[0,1]^2.$\smallskip

\textit{Step 2.} Fix some $i$ and let $z_1,z_2,w_1,w_2$ be the four "vertices" of $B_i$
with $z_1,z_2\in \{F=\delta\}$ and $w_1,w_2\in \partial \Omega$ ordered such that
$$[z_1,z_2]\cup [z_1,w_1]\cup [z_2,w_2]$$ is the "non-curved" boundary of $B_i$ (see Figure \ref{ApproximateBoundaryByPolygon}).
Moreover, for $\delta>0$ small enough, the "curved" boundary of $B_i$ is (up to a rotation) the graph of some $C^{1,1}$ function.
Since $F(z_1)-F(z_2)=0$ we have
$$\nabla F(z)\cdot (F(z_1)-F(z_2))=0$$ for some
 $z\in [F(z_1),F(z_2)]$.
Hence since $F\in C^{1,1}$ we deduce that, for $j=1,2$,
$$|\nabla F(z_j)\cdot (z_1-z_2)|=|\nabla (F(z_j)-\nabla F(z))\cdot (z_1-z_2)|\leq C(F)|z_1-z_2|^2.$$
Since $|z_1-z_2|$ is of order $\delta$ and $|\nabla F(z_j)|$ is of order $1$ (since $\nabla F\neq 0$ on $\partial \Omega$) we get that the cosine of the angle between $z_1-z_2$ and $z_1-w_1$ as well as between $z_1-z_2$ and $z_1-w_1$ is of order $\delta$.
This implies in particular that the two lines
$$l_1(s):=z_1+s(w_1-z_1)/|w_1-z_1|\quad \text{and}\quad l_2(t):=z_2+t(w_2-z_2)/|w_2-z_2|$$
can only intersect at a point at distance at least of order $1$ from the segment $[z_1,z_2]$.
Recalling that the "curved" boundary of $B_i$ is at distance of order $\delta$ from the segment $[z_1,z_2]$, we can therefore find some $\overline{s}>|w_1-z_1|$ and $\overline{t}>|w_2-z_2|$ such the convex hull of the points
$$\{z_1,z_2,z_3:=l_2(\overline{t}),z_4:=l_1(\overline{s})\},$$ which we denote by $P_i$ (see Figure \ref{ApproximateBoundaryByPolygon}),
is a convex quadrilateral containing $B_i$ and whose boundary consists of
$$[z_1,z_2]\cup [z_2,z_3]\cup[z_3,z_4]\cup[z_4,z_1].
$$

\textit{Step 3.}
Using Lemma \ref{lemma:bilinear} there exists a (bi-linear) $C^{\infty}$- differomorphism $\varphi_i$ mapping $P_i$ to $[0,1]^2.$ Up to a rotation we have that $\varphi_i(\overline{B_i})$ is the region delimited by
$$[(0,0),(1,0)]\cup [(0,0),(0,h_i(0))]\cup [(1,0),(1,h_i(1))]\cup \{(x,h_i(x)):x\in [0,1]\}$$
for some $h_i\in C^{1,1}([0,1],(0,\infty))$.
Finally the map
$$(x,y)\rightarrow (x,h_i(x)y)$$ is trivially seen to be a $C^{1,1}$- diffeomorphism from $[0,1]^2$ to $\varphi_i(\overline{B_i})$  which concludes the proof.

\end{proof}
\begin{figure}
\begin{tikzpicture}[scale=0.6]
\draw[Blue,fill=Blue!15!white] (0.4,0.2) -- (1.2,2.7) -- (5.5,4.5) -- (4.5,1.0) -- (5.5,-1) -- (3,-0.7) -- cycle;
\draw[Green,very thick] (4.5,1.0) -- ($ (4.5,1.0)+0.7/sqrt(1.0^2+3.5^2)*(1.0,3.5)+0.7/sqrt(1.0^2+2.0^2)*(1.0,-2.0)$);
\draw[Green,very thick] (5.5,-1) -- ($(5.5,-1)-0.27/sqrt(2.5^2+0.3^2)*(-2.5,0.3)-0.27/sqrt(1.0^2+2.0^2)*(-1.0,2.0)$);
\draw[Green,very thick] (3,-0.7) -- ($ (3,-0.7)-2.0/sqrt(2.5^2+0.3^2)*(2.5,-0.3)-2.0/sqrt(2.6^2+0.9^2)*(-2.6,0.9)$);
\draw[Red,very thick] ($ (3,-0.7)-2.0/sqrt(2.5^2+0.3^2)*(2.5,-0.3)-2.0/sqrt(2.6^2+0.9^2)*(-2.6,0.9)$) -- ($ (3,-0.7)-13.0/sqrt(2.5^2+0.3^2)*(2.5,-0.3)-13.0/sqrt(2.6^2+0.9^2)*(-2.6,0.9)$);
\draw[Red,very thick] ($(0.4,0.2)-5*0.32/sqrt(0.8^2+2.5^2)*(0.8,2.5)-5*0.32/sqrt(2.6^2+0.9^2)*(2.6,-0.9)$) -- ($ (3,-0.7)-13.0/sqrt(2.5^2+0.3^2)*(2.5,-0.3)-13.0/sqrt(2.6^2+0.9^2)*(-2.6,0.9)$);
\draw[Green,very thick] (0.4,0.2) --($(0.4,0.2)-0.32/sqrt(0.8^2+2.5^2)*(0.8,2.5)-0.32/sqrt(2.6^2+0.9^2)*(2.6,-0.9)$);
\draw[Red,very thick] ($(0.4,0.2)-0.32/sqrt(0.8^2+2.5^2)*(0.8,2.5)-0.32/sqrt(2.6^2+0.9^2)*(2.6,-0.9)$) --($(0.4,0.2)-5*0.32/sqrt(0.8^2+2.5^2)*(0.8,2.5)-5*0.32/sqrt(2.6^2+0.9^2)*(2.6,-0.9)$);
\draw[Green,very thick] (1.2,2.7) -- ($(1.2,2.7)-0.4/sqrt(0.8^2+2.5^2)*(-0.8,-2.5)-0.4/sqrt(4.3^2+1.8^2)*(4.3,1.8)$);
\draw[Green,very thick](5.5,4.5)--($(5.5,4.5)-0.365/sqrt(1.0^2+3.5^2)*(-1.0,-3.5)-0.365/sqrt(4.3^2+1.8^2)*(-4.3,-1.8)$);
\draw[Blue,thick] (0.4,0.2) -- (1.2,2.7) -- (5.5,4.5) -- (4.5,1.0) -- (5.5,-1) -- (3,-0.7) -- cycle;
\draw[thick] (0,0) .. controls (-1,1) and (0.5,2.5) .. (1,3) .. controls (1.5,3.5) and (5,5) .. (6,5) .. controls (7,5) and (5,2) .. (5,1) .. controls (5,0) and (6,0) .. (6,-1) .. controls (6,-2) and (3,-1) .. (2,-1) .. controls (1,-1) and (1,-1) .. (0,0);
\draw (3.0,1.2) node{$P$};
\draw (3,-0.1) node{$z_1$};
\draw (2.4,-1.5) node{$w_1$};
\draw (1,0.5) node{$z_2$};
\draw (-0.7,0.12) node{$w_2$};
\draw (3,-3.4) node{$z_4$};
\draw (-2,-1) node{$z_3$};
\draw (0.2,1.2) node{$B_1$};
\draw (1.2,-0.5) node{$B_i$};
\draw (0.5,-1.5) node{$P_i$};
\end{tikzpicture}
\caption{Reduction to the case of a polygon.\label{ApproximateBoundaryByPolygon}}
\end{figure}
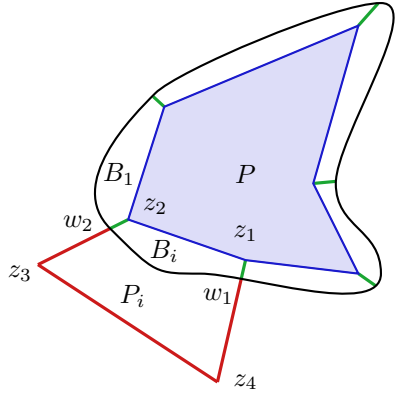

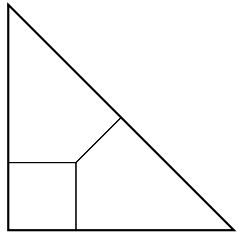
\begin{figure}
\begin{tikzpicture}[scale=0.6]
\draw[thick] (0,0) -- (5,0) -- (0,5) -- cycle;
\draw (0,0) -- (1.5,0) -- (1.5,1.5) -- (0,1.5) -- cycle;
\draw (1.5,0) -- (5,0) -- (2.5,2.5) -- (1.5,1.5) -- cycle;
\draw (0,1.5) -- (0,5) -- (2.5,2.5) -- (1.5,1.5) -- cycle;
\end{tikzpicture}
\caption{Covering of a triangle by three convex quadrilaterals.\label{CoverTriangle}}
\end{figure}
\begin{definition} A map $\varphi:\mathbb{R}^2\rightarrow \mathbb{R}^2$ is called bi-linear if
 for every $x,y\in \mathbb{R}$ the functions $\varphi(x,\cdot)$ and $\varphi(\cdot,y)$ are affine.
 Equivalently a map $\varphi$ is bi-linear if there exist eight constants $a_1,a_2,a_3,a_4,b_1,b_2,b_3,b_4\in \mathbb{R}$ such that
$$\varphi(x,y)=(a_1xy+a_2x+a_3y+a_4,b_1xy+b_2x+b_3y+b_4),\quad \text{for $(x,y)\in \mathbb{R}^2$}.$$
\end{definition}
In the previous proof we used the following elementary lemma.
\begin{lemma}\label{lemma:bilinear} For any closed convex quadrilateral $P\subset \mathbb{R}^2$ there exists a bi-linear map $\varphi\in \operatorname{Diff}^{\infty}([0,1]^2;P)$.
\end{lemma}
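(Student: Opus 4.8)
The plan is to reduce the problem to a canonical configuration and then write down the bi-linear map explicitly by solving a small linear system. First I would recall that any closed convex quadrilateral $P$ with vertices $p_1,p_2,p_3,p_4$ (labelled in cyclic order) is the image of the unit square under an affine change of the target combined with a suitable bi-linear map; more precisely, a bi-linear map $\varphi$ is uniquely determined by prescribing the four values $\varphi(0,0),\varphi(1,0),\varphi(1,1),\varphi(0,1)$, since these four conditions fix the eight coefficients $a_1,\dots,a_4,b_1,\dots,b_4$ (four scalar equations for the $a$'s, four for the $b$'s, each system triangular and hence uniquely solvable). So I would \emph{define} $\varphi$ to be the unique bi-linear map sending the corners $(0,0),(1,0),(1,1),(0,1)$ of $[0,1]^2$ to $p_1,p_2,p_3,p_4$ respectively. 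The content of the lemma is then that this $\varphi$ is a diffeomorphism onto $P$; smoothness is automatic since $\varphi$ is polynomial and its inverse, once shown to exist, is smooth by the inverse function theorem provided the Jacobian does not vanish.

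The key steps, in order, would be: (i) observe that on each edge of the square $\varphi$ is affine (e.g. $\varphi(\cdot,0)$ is affine), hence maps the edge $[(0,0),(1,0)]$ bijectively onto the segment $[p_1,p_2]$, and similarly for the other three edges; so $\varphi$ maps $\partial[0,1]^2$ homeomorphically onto $\partial P$. (ii) Compute the Jacobian determinant $\det D\varphi(x,y)$; because $D\varphi$ is affine in $(x,y)$, $\det D\varphi$ is an affine function of $(x,y)$ (the $xy$-terms cancel by a direct computation), hence attains its extrema on $[0,1]^2$ at the four corners. (iii) Evaluate $\det D\varphi$ at each corner: at $(0,0)$ it equals $\det(p_2-p_1, p_4-p_1)$, which is (twice) the signed area of the triangle $p_1p_2p_4$, and by convexity of $P$ with the cyclic labelling this is nonzero and of a fixed sign; the analogous statement holds at the other three corners. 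Hence $\det D\varphi$ has a fixed sign throughout $[0,1]^2$, so $\varphi$ is a local diffeomorphism. (iv) Combine the boundary homeomorphism from (i) with the nonvanishing Jacobian and a degree-theory / invariance-of-domain argument (or simply the fact that a proper local diffeomorphism from a simply connected domain onto a simply connected domain with homeomorphic boundary behaviour is a global diffeomorphism) to conclude $\varphi:[0,1]^2\to P$ is a bijection, and therefore $\varphi\in\operatorname{Diff}^\infty([0,1]^2;P)$.

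The main obstacle I expect is step (iii): verifying that $\det D\varphi$ does not vanish and has a constant sign on the whole square, which is exactly where convexity of $P$ must be used, and one has to be careful that "convex quadrilateral" is interpreted so that no three vertices are collinear and the vertices are in convex position (otherwise the claim is false — a non-convex quadrilateral gives a fold). Concretely, with $v=p_2-p_1$, $w=p_4-p_1$, $u=p_3-p_1$, one has $\det D\varphi(x,y)=\det(v,w)+x\,\det(v,u-v-w)+y\,\det(u-v-w,w)$ up to signs, and convex position guarantees $\det(v,w)$, $\det(u-v-w,\cdot)$ terms combine so that the values at all four corners — each a $2\times2$ determinant of two consecutive edge vectors — are strictly positive (for the counterclockwise labelling). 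Once the sign is pinned down at the corners, affineness of $\det D\varphi$ forces strict positivity on the closed square. Steps (i), (ii), (iv) are then routine.
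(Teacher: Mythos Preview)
Your proposal is correct and follows essentially the same approach as the paper: define the bi-linear map by its values at the four corners, use convexity to show the Jacobian does not vanish, and invoke degree theory to pass from the boundary homeomorphism to a global diffeomorphism. The only cosmetic difference is that the paper first composes with an affine map to normalize three of the vertices to $(0,0),(1,0),(0,1)$ before checking $\det\nabla\varphi>0$ directly, whereas you work in the original coordinates and exploit the (correct and useful) observation that $\det D\varphi$ is affine in $(x,y)$, reducing the sign check to the four corners; both routes arrive at the same place.
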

\begin{proof} Denote by $z_1,z_2,z_3,z_4$ the vertices of $P$ ordered so that $\partial P=[z_1,z_2]\cup [z_2,z_3]\cup [z_3,z_4]\cup [z_4,z_1].$
With no loss of generality, using an affine map (which preserves the convexity; note that the composition of a bi-linear map with an affine map is still bi-linear), one can assume that
\begin{align*}
z_1=(0,0),\quad z_2=(1,0)\quad \text{and}\quad  z_4=(0,1).
\end{align*}
Let us denote $z_3=(\alpha,\beta).$ Since $P$ is convex we easily deduce that
\begin{equation}\label{condition.convex}\alpha>0,\quad \beta>0\quad \text{and}\quad\alpha+\beta>1.\end{equation}
We claim that the map
\begin{align*}
\varphi(x,y)=((\alpha-1)xy+x,(\beta-1)xy+y)
\end{align*}
has all the wished properties.
First using (\ref{condition.convex}), one easily deduces that $\varphi$ is one-to-one in $[0,1]^2$ and that $\det\nabla \varphi>0$ in $[0,1]^2$. Thus $\varphi\in \operatorname{Diff}^{\infty}([0,1]^2;\varphi([0,1]^2)).$
Finally since
\begin{align*}
\varphi(0,0)=z_1,\quad \varphi(1,0)=z_2,\quad \varphi(1,1)=z_3\quad \text{and}\quad \varphi(0,1)=z_4,
\end{align*}
we have $\varphi(\partial [0,1]^2)=\partial P$ and hence by degree theory $\varphi([0,1]^2)=P$, proving the lemma.
\end{proof}
\end{appendix}

\bibliographystyle{plain}
\bibliography{jacobian_inequality_Lp}

\begin{thebibliography}{10}

\bibitem{Alberti}
Giovanni Alberti, Marianna Cs\"ornyei, and David Preiss.
\newblock Structure of null sets in the plane and applications.
\newblock In {\em 4ECM Stockholm 2004}, pages 3--22. European Mathematical
  Society, 2005.

\bibitem{AvinyoEtAl}
Albert Aviny\'o, Joan Sol\`a-Morales, and Marta Val\`encia.
\newblock A singular initial value problem to construct density-equalizing
  maps.
\newblock {\em Journal of Dynamics and Differential Equations}, 24(1):51--59,
  2012.

\bibitem{Ball}
John~M. Ball.
\newblock Convexity conditions and existence theorems in nonlinear elasticity.
\newblock {\em Arch. Rational Mech. Anal.}, 63(4):337--403, 1976/77.

\bibitem{Ball2}
John~M. Ball.
\newblock Global invertibility of {S}obolev functions and the interpenetration
  of matter.
\newblock {\em Proc. Roy. Soc. Edinburgh Sect. A}, 88(3-4):315--328, 1981.

\bibitem{Burago-Kleiner}
Dmitri Burago and Bruce Kleiner.
\newblock Separated nets in {E}uclidean space and {J}acobians of bi-{L}ipschitz
  maps.
\newblock {\em Geom. Funct. Anal.}, 8(2):273--282, 1998.

\bibitem{Dac-Carlier}
Guillaume Carlier and Bernard Dacorogna.
\newblock R\'esolution du probl\`eme de {D}irichlet pour l'\'equation du
  jacobien prescrit via l'\'equation de {M}onge-{A}mp\`ere.
\newblock {\em C. R. Math. Acad. Sci. Paris}, 350(7-8):371--374, 2012.

\bibitem{CDK}
Gyula Csat{\'o}, Bernard Dacorogna, and Olivier Kneuss.
\newblock {\em The pullback equation for differential forms}.
\newblock Progress in Nonlinear Differential Equations and their Applications,
  83. Birkh\"auser/Springer, New York, 2012.

\bibitem{dacorogna}
Bernard Dacorogna.
\newblock {\em Direct methods in the calculus of variations}, volume~78 of {\em
  Applied Mathematical Sciences}.
\newblock Springer, New York, second edition, 2008.

\bibitem{Dac-Moser}
Bernard Dacorogna and J{\"u}rgen Moser.
\newblock On a partial differential equation involving the {J}acobian
  determinant.
\newblock {\em Ann. Inst. H. Poincar\'e Anal. Non Lin\'eaire}, 7(1):1--26,
  1990.

\bibitem{DeLellis}
Camillo De~Lellis.
\newblock Some fine properties of currents and applications to distributional
  {J}acobians.
\newblock {\em Proc. Roy. Soc. Edinburgh Sect. A}, 132(4):815--842, 2002.

\bibitem{DePhilippis}
Guido De~Philippis.
\newblock Weak notions of {J}acobian determinant and relaxation.
\newblock {\em ESAIM Control Optim. Calc. Var.}, 18(1):181--207, 2012.

\bibitem{KKR}
Konstantinos Koumatos, Filip Rindler, and Emil Wiedemann.
\newblock Differential inclusions and {Y}oung measures involving prescribed
  {J}acobians.
\newblock {\em SIAM J. Math. Anal.}, 47(2):1169--1195, 2015.

\bibitem{KrantzParks}
Steven~G. Krantz and Harold~R. Parks.
\newblock {\em The geometry of domains in space}.
\newblock Birkh\"auser Advanced Texts: Basler Lehrb\"ucher. [Birkh\"auser
  Advanced Texts: Basel Textbooks]. Birkh\"auser Boston, Inc., Boston, MA,
  1999.

\bibitem{Marchese}
Andrea Marchese.
\newblock Two applications of the theory of currents.
\newblock {\em PhD Thesis}, 2013.

\bibitem{McMullen}
Curtis~T. McMullen.
\newblock Lipschitz maps and nets in {E}uclidean space.
\newblock {\em Geom. Funct. Anal.}, 8(2):304--314, 1998.

\bibitem{Riv-Ye}
Tristan Rivi{\`e}re and Dong Ye.
\newblock Resolutions of the prescribed volume form equation.
\newblock {\em NoDEA Nonlinear Differential Equations Appl.}, 3(3):323--369,
  1996.

\bibitem{Ye}
Dong Ye.
\newblock Prescribing the {J}acobian determinant in {S}obolev spaces.
\newblock {\em Ann. Inst. H. Poincar\'e Anal. Non Lin\'eaire}, 11(3):275--296,
  1994.

\end{thebibliography}

\end{document}